\newtheorem{definition}{Definition}
\newtheorem{theorem}{Theorem}
\newtheorem{lemma}[theorem]{Lemma}
\newtheorem{proposition}[theorem]{Proposition}
\newtheorem{corollary}[theorem]{Corollary}
\title{The space of multifurcating ranked tree shapes: enumeration, lattice structure, and Markov chains}
\author[1]{Julie Zhang}
\author[2]{Noah A. Rosenberg}
\author[1,3]{Julia A. Palacios} 
\date{\today} 
\affil[1]{Department of Statistics, Stanford University}
\affil[2]{Department of Biology, Stanford University}
\affil[3]{Department of Biomedical Data Science, Stanford University}
\begin{document}

\maketitle

\begin{abstract}
Coalescent models of bifurcating genealogies are used to infer evolutionary parameters from molecular data. However, there are many situations where bifurcating genealogies do not accurately reflect the true underlying ancestral history of samples, and a multifurcating genealogy is required. The space of multifurcating genealogical trees, where nodes can have more than two descendants, is largely underexplored in the setting of coalescent inference. In this paper, we examine the space of rooted, ranked, and unlabeled multifurcating trees, which we denote by $\mathcal{MT}_N$. We recursively enumerate the space and then construct a partial ordering which induces a lattice on $\mathcal{MT}_N$. The lattice structure lends itself naturally to defining Markov chains that permit exploration on the space of multifurcating ranked tree shapes. Finally, we prove theoretical bounds for the mixing time of two Markov chains defined on the lattice, and we present simulation results comparing the distribution of trees and tree statistics under various coalescent models to the uniform distribution on $\mathcal{MT}_N$. 
\end{abstract}


\section{Introduction}


Phylogenetic inference methods typically model genealogical relationships of molecular samples as bifurcating trees, in particular in the context of Bayesian inference with coalescent models used in population genetics \citep{Kingman1982}. However, there has been recent interest in modeling multifurcating trees, also called multiple mergers, to apply these methods to a more general class of evolutionary processes. Multifurcating trees arise in various biological scenarios, including infectious disease transmission in the presence of superspreader events \citep{li2017quantifying, menardo2021multiple}, animal populations whose offspring distributions are highly skewed \citep{sargsyan2008coalescent, niwa2016reproductive, eldon2018evolution, byeon2019origin, eldon2020evolutionary}, and populations that undergo strong positive selection \citep{der2012dynamics, eldon2023sweepstakes,arnason2023sweepstakes}. Therefore, it is important to develop methods that explore the space of multifurcating trees for phylogenetic inference applicable to more general settings.\\ 

The $\Lambda$-coalescent is a commonly used class of multiple merger coalescent (MMC) models \citep{pitman1999coalescents, sagitov1999general}, which includes the Beta-coalescent \citep{berestycki2007beta} and the Psi-coalescent \citep{eldon2006coalescent}. It has been used to infer coalescent model parameters such as the effective population size given a fixed multifurcating coalescent tree \citep{hoscheit2019multifurcating, zhang2025multiple}. \cite{korfmann2024simultaneous} propose two methods to infer the effective population size and Beta-coalescent parameters from a given ancestral recombination graph (ARG), either assumed to be known or inferred from ARGweaver \citep{rasmussen2014genome}. The first method is an extension of the Multiple Sequentially Markovian Coalescent algorithm to allow for up to four lineages coalescing at once \citep{schiffels2020msmc}, and the second is a graph neural network model trained on ARGs simulated under various parameter values. \cite{zhang2021nonbifurcating} proposed a maximum likelihood (MLE) with regularization method via an adaptive LASSO algorithm, to estimate the branch lengths of a multifurcating genealogy for a given fixed tree topology. \cite{helekal2025inference} present an MCMC method to reconstruct dated and labeled multifurcating genealogies from an MLE estimated tree. The authors assumed the Beta-coalescent model and an extended Billera-Holmes-Vogtmann space embedding to sample the space of trees. \\

Since genealogies are not usually directly observed, model parameters can be estimated by sampling from the augmented posterior of model parameters and genealogies jointly, given molecular data. In \citet{lewis2005polytomies}, the authors propose a prior on unrooted, unranked and labeled multifurcating trees in which trees with $K$ internal nodes are $C$ times more likely than trees with $K+1$ internal nodes. The authors then proposed a Markov chain to explore this tree space and to do Bayesian inference of model parameters.\\


Inference on spaces of multifurcating trees is difficult because of its superexponentially growing cardinality compared to binary trees, which itself is a large space. The cardinality of the space of Kingman's coalescent trees, or labeled histories, is 
\begin{equation}\label{eq:l-r-binary}
    | \mathcal{T}_N^L | = \frac{N!(N-1)!}{2^{N-1}},
\end{equation}
where $\mathcal{T}_N^L$ denotes the space of bifurcating rooted, ranked, and labeled trees in which there is an ordering to the internal nodes and the tips are uniquely labeled \citep{edwards1970estimation}. The cardinality of $\mathcal{MT}_N^L$, the space of multifurcating rooted, ranked, and labeled trees is 
\begin{equation} \label{eq:l-r-multi}
    | \mathcal{MT}_N^L| \equiv f(N) = \sum_{k=1}^{N-1} S(N,k) f(k), \qquad f(1)=1,
\end{equation}
where $S(n,k)$ is the Stirling number of the second kind \citep{Murtagh1984}. Additionally, $S(N,k)f(k)$ is the number of multifurcating ranked and labeled trees with $N$ tips and $k$ internal nodes. For example, when $N=8$, $|\mathcal{T}_N^L |= 1,587,600$ and $| \mathcal{MT}_N^L| = 10,270,696$. \\

Different binary tree resolutions have been proposed to more efficiently explore the latent genealogical space. In \cite{palacios2019bayesian}, the authors propose to model the space of ranked and unlabeled binary trees with $N$ tips, known as Tajima's coalescent trees \citep{Sainudiin2015}. Here, we consider the space of unlabeled multifurcating ranked tree shapes, the extension of Tajima's trees to the multifurcating case, as an alternative for Bayesian inference. Although we do not tackle the inference problem in this work, we propose and study Markov chains in this space and provide different encodings for studying the space. In particular, we provide an encoding that allows us to enumerate the space---solving a problem posed by \cite{Murtagh1984} that to our knowledge has remained open until now. The second proposed encoding allows us to define a lattice on this space and to study Markov chains defined on the lattice. 

\begin{definition}
The space of rooted multifurcating ranked tree shapes with $N$ unlabeled tips is $\mathcal{MT}_{N}=\bigcup_{K=1}^{N-1}\mathcal{T}_{N,K}$, where $\mathcal{T}_{N,K}$ is the space of all multifurcating ranked tree shapes with $K$ internal nodes labeled in increasing order from the root to the tip. 
\end{definition}

\subsection{A procedure for studying tree spaces}

The paper lies in the tradition of numerous lines of investigation in mathematical phylogenetics that follow the procedure of (1) defining a concept of a space of trees, (2) enumerating the permissible trees in the space, (3) studying adjacency between trees in the space, and (4) exploring Markov chains that traverse the tree space. Here, using the multifurcating ranked tree shapes $\mathcal{MT}_N$ from \cite{Murtagh1984}, we pursue the subsequent steps of establishing the nature of the tree space and its traversal. \\

In the general procedure for analyzing tree spaces, once a type of tree structure is defined, the trees are enumerated. Enumeration of trees has been of mathematical interest since the second half of the 19th century \citep{cayley1856note, schroder1870vier}, connecting to mathematical phylogenetics in the second half of the 20th century~\citep{rosenberg2025mathematical}. \cite{felsenstein1978number} stated a recursive expression for counting the number of rooted trees with $n$ labeled tips and $m$ unlabeled internal nodes. Results for seven different spaces of dendrograms are surveyed in \citep{Murtagh1984}, which contains both bifurcating and multifurcating spaces, well-documented in the \href{https://oeis.org}{Online Encyclopedia of Integer Sequences} \citep{sloane2003line}. Other summaries of enumerations of structures of interest to phylogenetics appear in \cite{semple2003phylogenetics} and \cite{steel2016phylogeny}---and for multifurcating structures in particular, in \cite{wirtz2022enumeration} and \cite{dickey2025labelled}. \\


Next, with trees defined and enumerated, alternate representations or encodings of trees are often introduced to facilitate calculations, to define distances, and to construct Markov chains. For example, integer-valued square matrices (F-matrices) are used to bijectively encode ranked binary trees. This bijection is used to define distances between trees \citep{Kim2020}, and in supervised machine learning algorithms for adaptive selection inference \citep{mo2023domain}. The string representation on ranked binary trees \citep{donaghey1975alternating} is used to define a Markov chain on the space of ranked tree shapes \citep{samyak2024statistical}, and the matching pairs of phylogenetic trees is used to define both distances and random walks on the space of unranked and labeled binary trees \citep{diaconis1998matchings, diaconis2002,simper2022adjacent}. \\


In the next step of the analysis of tree spaces, toward describing Markov chains on a tree space, a notion of adjacency between trees is introduced and its consequences investigated. Bayesian phylogenetic inference methods rely on Markov chain Monte Carlo methods (MCMC) for exploring the tree space. Multiple Markov chains have been proposed and studied on binary tree spaces such as the random transpositions chain on both rooted and unrooted unranked and labeled binary trees \citep{diaconis1998matchings, aldous2000mixing}, the adjacent transpositions chain on rooted ranked binary trees (both labeled and unlabeled) \citep{simper2022adjacent}, the subtree-prune-and-regraft, and nearest-neighbor interchange \citep{spade2014note}. \\

With adjacency of trees characterized, Markov chains can be used in combinatorial optimization to find the maximum likelihood tree topology  \citep{friedman2001structural} and to find the Fr\'{e}chet mean of a sample of trees \citep{samyak2024statistical}. Studies of Markov chains on multifurcating trees are scarce. \cite{lewis2005polytomies} define a reversible jump Markov chain on the space of unrooted, labeled, and unranked multifurcating phylogenies and use Metropolis-Hastings sampling to get posterior distributions of multifurcating phylogenies. 
\cite{sorensen2024down} define Markov chains on unranked, labeled, planar multifurcating trees based on the $(\alpha, \gamma)$ splitting tree \citep{haas2008, chen2009} and conjecture the existence of diffusive scaling limits generalizing the ``Aldous diffusion'' as a continuum-tree-values process. \\

In this manuscript, we derive bijective representations of $T_{N,K} \in \mathcal{MT}_N$, a ranked tree shape with $N$ tips and $K$ internal nodes for $1 \leq K \leq N-1$. In particular, in Section~\ref{sec:string}, we extend the string representation \citep{donaghey1975alternating} to multifurcating trees and use it to enumerate the space $\mathcal{MT}_N$. We define a partial ordering on $\mathcal{MT}_N$ and prove it induces a lattice on $\mathcal{MT}_N$ using the multifurcating F-matrix representation we construct in Section~\ref{sec:lattice}. The connectivity between two trees has both a clear geometrical and F-matrix interpretation. Utilizing the lattice, we define two specific Markov chains on $\mathcal{MT}_N$ and prove bounds on the mixing time of these Markov chains in Section~\ref{sec:mc}. We then present some simulation results illustrating the differences between the uniform distribution of trees on $\mathcal{MT}_N$ simulated using a Markov chain on the lattice to the distribution of a specific $\Lambda-$coalescent \citep{pitman1999coalescents, sagitov1999general}. We finish by discussing further directions for which the proposed work could be used, such as defining tree distances and stochastic enumeration, in Section~\ref{sec:discussion}. 

\section{Enumerating $\mathcal{MT}_N$} \label{sec:string}

The first question we want to tackle is the unknown cardinality of $\mathcal{MT}_N$. The number of labeled multifurcating trees with $N$ tips is known, but having unlabeled trees presents a much larger challenge when enumerating the space. To aid the enumeration of $\mathcal{MT}_N$, we develop a string representation of a multifurcating tree that records the parents and offspring of each internal node. It can be seen as an extension of the string representation of binary trees proposed by \cite{donaghey1975alternating}, but not used for enumeration originally. \cite{callan2009note} used the string representation to count the number of ranked, unlabeled binary tree shapes with $N$ tips and $c$ cherries, where $c \in \{1,2,\ldots,\lfloor N/2 \rfloor\}$. \\ 

Let $T_{N,K}$ be a ranked tree shape with $N$ tips and $K$ internal nodes. Assume all $N$ samples are extant at the tips and the internal nodes are labeled from 1 to $K$ in increasing order from root to tip (see Figures~\ref{fig:all_string_N4} and \ref{fig:string-larger}). Only internal nodes are labeled, and the tips are unlabeled. 


\begin{definition}\label{def:string}
The \textit{\textbf{string representation}} of $T_{N,K}$ consists of two nonnegative, integer-valued vectors $\{t,l\}$ each with length $K$ where $t_i$ corresponds to the label of the parent of internal node $i$ and $l_i$ corresponds to the number of leaves that subtend from internal node $i$, i.e. the number of pendant edges of internal node $i$. 
\end{definition}

For example, $\{t=(0), l=(N)\}$ corresponds to the star tree with $N$ leaves in which a single node subtends $N$ tips. Figure~\ref{fig:all_string_N4} shows the string representation of all ranked tree shapes with $N=4$ tips. Note that while the vector $t$ alone bijectly encodes a binary ranked tree shape, it is not sufficient to encode $T_{N,K}$, so we must introduce the additional vector $l$. The ordered vector $t$ corresponds to edges between labeled nodes and the ordered vector $l$ corresponds to the number of edges connecting internal nodes and unlabeled leaves. Figure~\ref{fig:string-larger} shows the string representation for a larger tree with $N=12$ and $K=5$.  In the next proposition, we prove $\mathcal{T}_{N,K}$ is in bijection with a space of length $K$ vectors $\{t,l\}$ that satisfy specific constraints. 

\begin{figure}[h]
    \centering
    \includegraphics[width=0.95\linewidth]{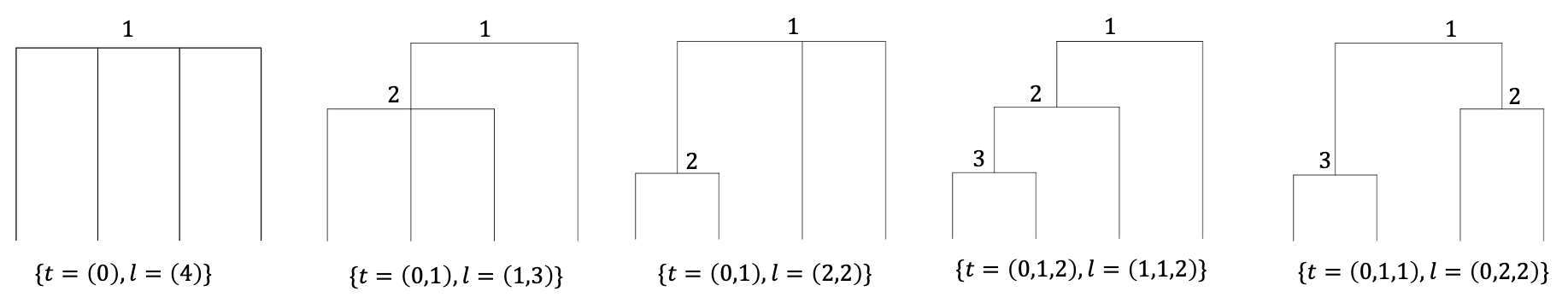}
    \caption{The string representation of all trees with 4 tips.}
    \label{fig:all_string_N4}
\end{figure}

\begin{figure}[h]
    \centering
    \includegraphics[width=0.65\linewidth]{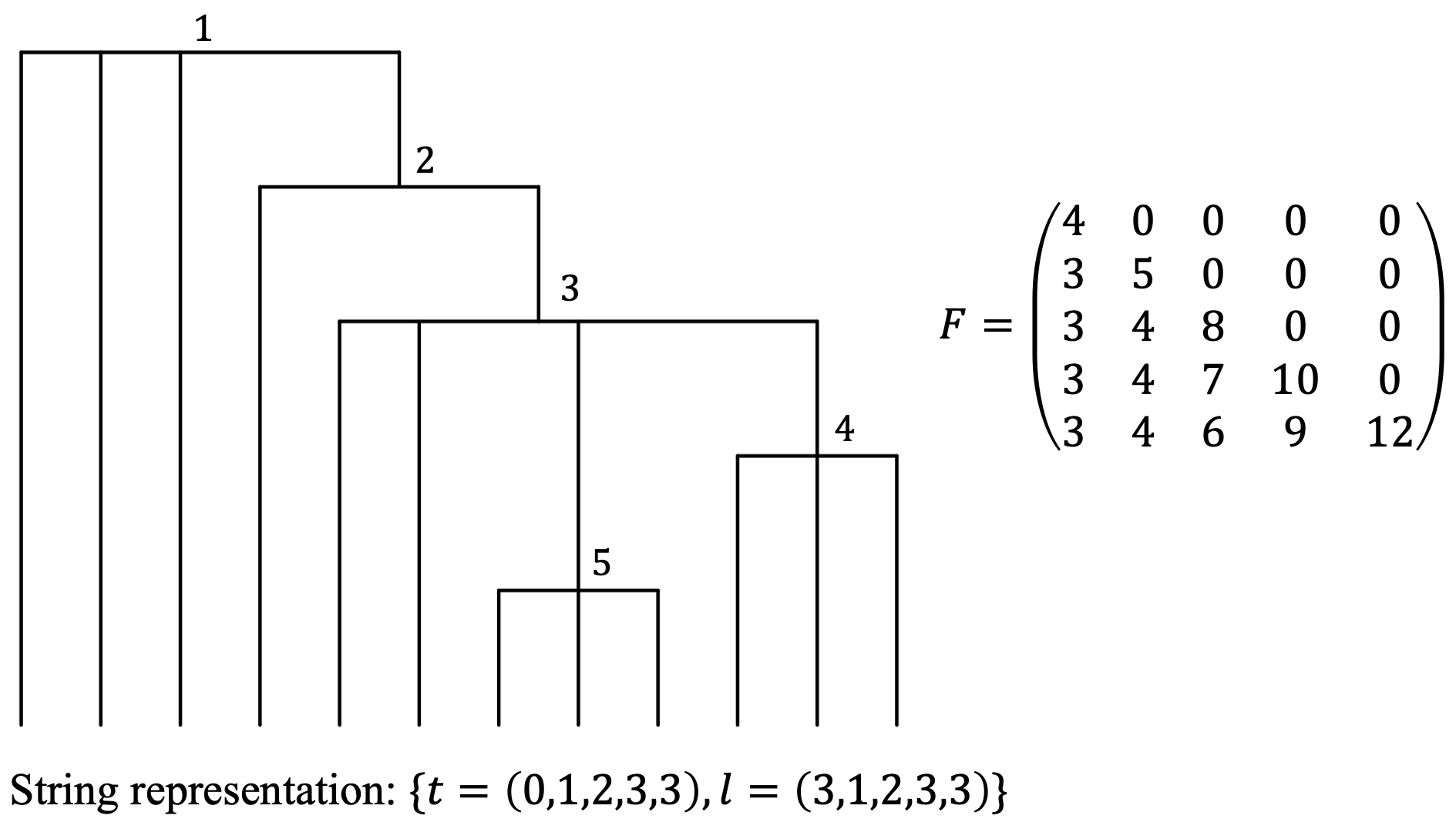}
    \caption{Two encodings of a ranked multifurcating tree with 12 tips and 5 internal nodes. The string representation $\{t,l\}$ is shown at the bottom of the tree. The parent node of 1 is 0 (not shown in the figure), so $t_1=0$. The parent node of 2 is 1, so $t_1=1$ and so forth. Node 1 subtends 3 leaves so $l_1=3$, node 2 subtends 1 leaf so $l_2=1$, and so forth. The F-matrix representation (introduced in Section~\ref{subsec:fmat}) is shown on the right of the tree. Let $u_{i}$ denote the time when node $i$ is created, then entry $F_{i,j}$ counts the number of extant lineages in the interval $(u_{j},u_{j+1})$ that do not furcate in $(u_{j},u_{i+1})$.}
    \label{fig:string-larger}
\end{figure}

\begin{proposition} \label{prop:string-bijection}
The space $\mathcal{T}_{N,K}$ is in bijection with the set of vectors $\{t, l\}$, each of length $K$ that satisfy the following constraints:  
\begin{enumerate}[label={S\arabic*.}]
    \item $t_1=0$ and $1\leq t_i\leq i-1$: the parent of the root is labeled 0 and the parent of node $i$ must have smaller rank (label) than $i$. 
    \item $\sum_{i=1}^K l_i=N$: the total number of leaves is $N$.
    \item $l_j\geq 2$ if $| \{ i: t_i= j\} | =0$: an internal node must subtend at least 2 leaves if it does not subtend an internal node. This also implies $l_K \geq 2$: the last internal node subtends at least 2 leaves and no internal nodes.
    \item $l_j\geq 1$ if $| \{ i: t_i= j\} | = 1$: an internal node must subtend at least 1 leaf if it is the parent of only one other internal node. 
\end{enumerate}
\end{proposition}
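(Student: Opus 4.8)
The plan is to exhibit a forward map $\Phi$ from $\mathcal{T}_{N,K}$ to the set of constrained vector pairs, a reconstruction map $\Psi$ in the opposite direction, and to check that the two are mutually inverse. First I would define $\Phi$: given $T_{N,K}$ with internal nodes ranked $1,\dots,K$ from root to tip, set $t_i$ to the rank of the parent of node $i$ (with $t_1=0$ for the root) and $l_i$ to the number of pendant leaf-edges at node $i$. The well-definedness of $\Phi$ — that its image lands in the constraint set — I would verify constraint by constraint. S1 holds because the parent of any internal node is a strict ancestor, hence strictly older and so of strictly smaller rank, forcing $1 \le t_i \le i-1$ for $i \ge 2$. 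S2 holds because the pendant edges partition the $N$ leaves among the internal nodes. For S3 and S4 I would use the single structural fact that in a ranked tree shape every internal node must have at least two children, where the children of node $j$ consist of the internal nodes $\{i : t_i = j\}$ together with its $l_j$ leaves; requiring $|\{i:t_i=j\}| + l_j \ge 2$ yields exactly $l_j \ge 2$ when $j$ has no internal child and $l_j \ge 1$ when it has exactly one, which are S3 and S4 (and $l_K\ge 2$ follows since the maximal rank $K$ can be no node's parent, as $t_i \le i-1 < K$ for every $i$).

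Next I would construct $\Psi$ and show it produces an element of $\mathcal{T}_{N,K}$. Given $\{t,l\}$ satisfying S1--S4, I create $K$ internal nodes carrying ranks $1,\dots,K$, draw an edge from each node $i\ge 2$ to node $t_i$, and attach $l_i$ unlabeled leaves to node $i$. I would first argue this is a tree: the relation $t_i < i$ guarantees that iterating the parent map strictly decreases the rank and therefore terminates at the unique root node $1$, so the graph is connected and acyclic. I would then check it is a genuine ranked tree shape with the right parameters: the constraint $t_i<i$ makes the rank assignment a linear extension of the ancestor partial order, hence a valid ranking; S2 gives exactly $N$ leaves; and S3--S4, read in reverse, guarantee every internal node has at least two children, so no node is a spurious degree-one vertex. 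Since the leaves are unlabeled, attaching a prescribed count $l_i$ at each node determines the shape unambiguously.

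Finally I would verify $\Psi\circ\Phi=\mathrm{id}$ and $\Phi\circ\Psi=\mathrm{id}$, which is essentially bookkeeping: reading off parents and leaf-counts from the tree built by $\Psi$ returns the original vectors, and rebuilding a tree from the vectors read off a given $T_{N,K}$ returns the same ranked shape, because a ranked tree shape is completely determined by the parent-rank and leaf-count data at each rank.

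I expect the main obstacle to be phrasing injectivity correctly in the presence of unlabeled leaves and a ranking: I must argue that the string is a complete invariant, i.e.\ that two ranked tree shapes yielding the same $\{t,l\}$ are genuinely the same object. The point to emphasize is that distinct internal nodes occupy distinct ranks, so there is no residual symmetry to quotient by — one cannot permute two internal nodes without changing the ranking — and leaf-unlabeledness is fully accounted for by recording only the counts $l_i$ rather than leaf identities. Making precise that ``at least two children per internal node'' is both necessary (for $\Phi$) and sufficient (for $\Psi$), and that it is captured exactly by S3--S4, is the conceptual core; everything else is verification.
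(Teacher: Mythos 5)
Your proposal is correct and follows essentially the same route as the paper's proof: read off the parent-rank vector $t$ and leaf-count vector $l$ from the tree, verify S1--S4 from the facts that parents precede children in rank and every internal node has at least two children, and reconstruct the tree from $\{t,l\}$ by drawing the edges $(t_i,i)$ and attaching $l_i$ pendant leaves (the paper packages this as a directed graph $G$ plus a multiset of pendant edges $E_2$). Your treatment is, if anything, slightly more explicit than the paper's on why the reconstruction yields a tree and why unlabeled leaves cause no loss of injectivity, but these are refinements of the same argument rather than a different one.
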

\begin{proof}
$T_{N,K}$ is a partially labeled directed graph where all internal nodes and edges connecting internal nodes form a directed graph $ G = \Big ( E_1 =\big \{ (t_i, i) \big \}_{i=2}^K, V= \big \{1,2,\ldots,K\big \} \Big )$ (see Figure~\ref{fig:construct_tree_from_string} Parts 1 and 2), together with a multiset of pendant edges $E_2 = \Big \{E_{2,i}= \big \{ e_i=(i,0),..., e_i=(i,0) \big \}_{i=1}^K \Big \}$, where $l_i = |E_{2,i}|$ (to get  Figure~\ref{fig:construct_tree_from_string} Part 3). The directed graph $G$ must satisfy $t_i \in \{1,2,\ldots,i-1\}$ for all $i=2,3,\ldots,K$ since the parent of node $i$ must exist before node $i$ itself, and the size of the multiset $E_2$ is $N$ since there are $N$ leaves. Additionally, the set of edges $E_1 \cup E_2$ must contain at least two edges of type $(i, \cdot)$ for all $i=1,2,\ldots,K$, representing the furcations of internal node $i$. It is clear that the set of all such $\{G, E_2\}$ is in bijection with $\mathcal{T}_{N,K}$. \\

\noindent To show the injection, for a given $G$ and $E_2$, we can uniquely define $t=(0,t_2,t_3,\ldots,t_K)$ and $l=(l_1,l_2,\ldots,l_K)$, where $l_i= |E_{2i}|$ for all $i=1,2,\ldots,K$. Clearly, the $t$ vector here satisfy conditions S1 and the $l$ vector satisfy conditions S2-S4 by the construction of $G$ and $E_2$. \\ 

\noindent Similarly to show the surjection, given a pair of ordered vectors $t=(0, t_2, t_3, \ldots, t_K), l=(l_1, l_2, \ldots ,l_K)$ that satisfy constraints S1-S4, we can construct the directed graph of edges connecting internal nodes is $ G = \Big ( E_1 = \big \{ (t_i, i) \big \}_{i=2}^K, V= \big \{ 1, 2, \ldots, K\big \} \Big )$ and the multiset $E_2$ of $N=\sum_{i=1}^K l_i$ undirected pendant edges is completely determined by $l$. Constraint S2 guarantees the tree has $N$ tips and $K$ internal nodes, constraint S1 guarantees that the parent of each internal node is another internal node that existed before it, and constraints S3 and S4 guarantee each internal node has at least two descendants. The result is a unique multifurcating ranked tree shape with $N$ tips and $K$ internal nodes, which shows the two spaces are in bijection. 
\end{proof}

In Figure~\ref{fig:construct_tree_from_string} we show how to reconstruct the tree from its string representation: $\{t=(0,1,2,3,3), l=(3,1,2,3,3)\}$ with the following steps: 
\begin{enumerate}
    \item Draw $K= |t|= |l|$ ranked internal nodes and $N= \sum_{i=1}^K l_i $ unlabeled tips 
    (S2). 
    \item Draw a directed edge between $(t_i,i)$ for $i=2,3,\ldots,K$. By S1, the $K$ internal nodes form a connected graph and $t_i <i$, which is the directed graph $G= (V,E_1)$. 
    \item Draw $l_i$ pendant edges from internal node $i$ to an unlabeled tip. This is the multiset $E_2$ of $N$ directed edges. By S3 and S4, each internal node now has at least two descendants, which shows it a multifurcating ranked tree shape with $N$ tips and $K$ internal nodes. 
\end{enumerate}

\begin{figure}[H]
    \centering
    \includegraphics[width=0.9\linewidth]{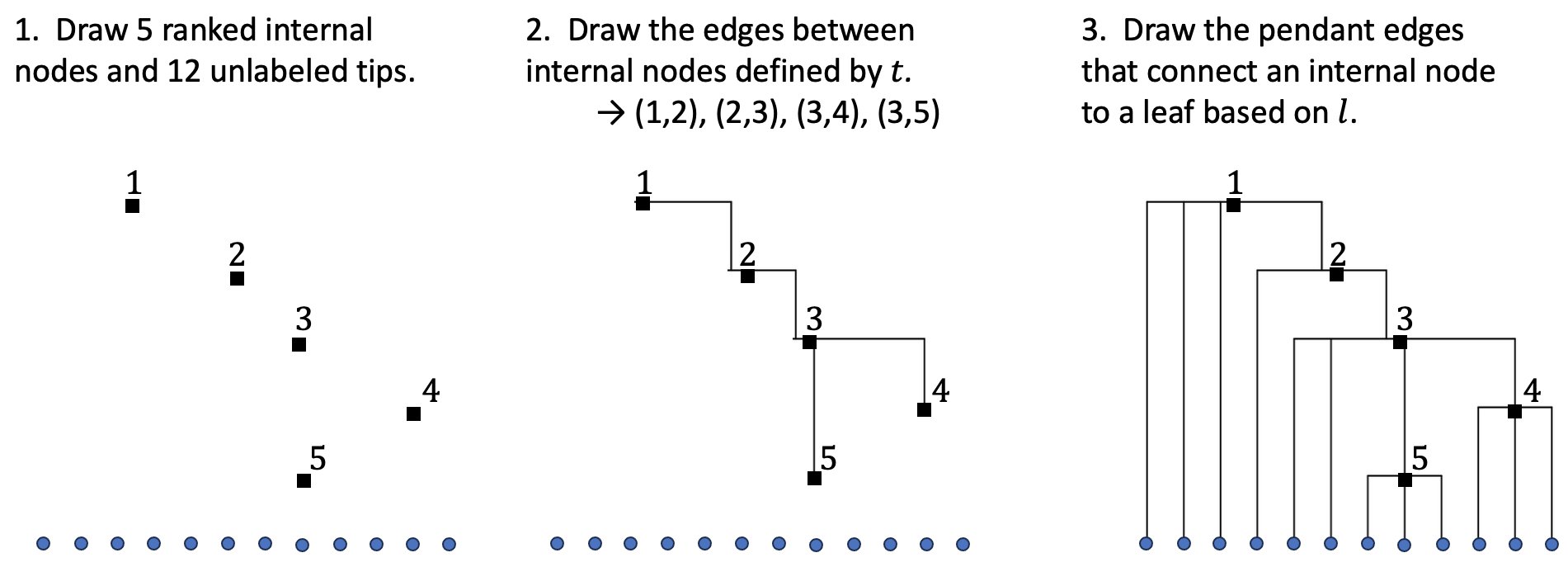}
    \caption{An example of the procedure to construct the tree from $\{t=(0,1,2,3,3),l=(3,1,2,3,3)\}$. Here $K=|t|=|l|=5$ and $M= \sum_{i=1}^K l_i=12$.}
    \label{fig:construct_tree_from_string}
\end{figure}

Based on the string representation, we define two quantities that are essential for enumerating the tree space. Let $K_0(t)$ be the number of internal nodes that do not appear in $t$ and $K_1(t)$ the number of internal nodes that appear exactly once in $t$, that is the number of internal nodes that subtend 0 and 1 internal nodes respectively. Mathematically, 
\begin{align*}
    K_0(t) &= \Big | \Big \{ j=1,2,\ldots,K : | \{ i: t_i= j\} | =0 \Big \} \Big |, \\
    K_1(t) &= \Big | \Big \{ j=1,2,\ldots,K : | \{ i: t_i= j\} | =1 \Big \} \Big |.
\end{align*}
Note in the definition of $K_1(t)$ the entry 0 does not count, as it is just a placeholder for the parent of the root and is not an actual internal node. $K_0(t), K_1(t)$ must satisfy $2K_0(t)+K_1(t) \leq N$ because the internal nodes that are not a parent to another internal node must subtend at least two leaves, and the internal nodes that are parent to exactly one internal node must subtend at least one leaf. We will utilize this fact about the ordered vector $t$ of the string representation to compute the cardinality of ranked multifurcating trees with $N$ tips and $K$ internal nodes $G(N,K):=|\mathcal{T}_{N,K}|$ for $K=1,2,\ldots,N-1$. \\

We first note that enumerative results for $K=1, 2$, and $N-1$ are already known: 
$G(N,1)= 1$ (star tree), $G(N,2) =  N-2$ as this number is simply dictated by the number of leaves descending from one of the nodes, and $G(N,N-1)$ is equal to the number of binary ranked tree shapes that corresponds to the Euler zig-zag number $1, 2, 5, 16, 61, 272, \ldots$ \citep{kuznetsov1994increasing,palacios2022enumeration}. For other values of $K$, our approach is to count the number of $l$ vectors that are compatible with one specific $t$ vector. This problem is equivalent to counting how many ways there are to assign leaves to each of the internal nodes, subject to certain constraints. Summing over all possible $t$ vectors will give the final enumerative result. We will use a well-known combinatorial result, often referred to as the ``stars and bars'' theorem: the number of ways to put $n$ indistinguishable balls into $k$ distinguishable boxes is $\binom{n-k+1}{k-1}$ \citep{batterson2011competition}. Here, our indistinguishable balls are the unlabeled leaves, and the distinguishable boxes are the labeled internal nodes. 

\subsection{The case of $K=3$}

Before we proceed to the general case, we will count the number of trees with $N$ tips and $K=3$ internal nodes. When $K=3$, there are only two possible $t$ vectors: $(0,1,1)$ and $(0,1,2)$. The number of multifurcating ranked tree shapes with $K=3$ is the sum of the number of all possible $l$ vectors compatible with the two $t$ vectors, that is, the number of ways to assign different number of leaves to each internal node in both $t$ vectors. \\

Consider the first case of $t=(0,1,1)$: since nodes 2 and 3 do not appear in $t$, we must have $l_2\geq 2$ and $l_3\geq 2$ and therefore we have $N-2\times 2=N-4$ leaves to be distributed among $K=3$ internal nodes. Since node 1 appears twice, there is no restriction on $l_1$. Therefore, the number of $l$ vectors compatible with $t=(0,1,1)$ is equivalent to the number of ways to put $N-4$ indistinguishable balls into 3 distinguishable bins: $\binom{N-4+3-1}{3-1} = \binom{N-2}{2}$. Here, we have $K_0(t)=2$ and $K_1(t)=0$, with $2K_0(t)+K_1(t)=4$. \\

In the second case of $t=(0,1,2)$, node 3 does not appear in $t$ so $l_3 \geq 2$. Node 1 and node 2 both appear once, so $l_1\geq 1, l_2 \geq 1$. The number of compatible $l$ vectors is again $\binom{N-4+3-1}{3-1} = \binom{N-2}{2}$. For this $t$, we have $K_0(t)=1$ and $K_1(t)=2$, with $2K_0(t)+K_1(t)=4$. Adding the two gives \[ G(N, 3) =  \binom{N-2}{2} +  \binom{N-2}{2}= N^2-5N+6= (N-2)(N-3).\] 

\subsection{The general case}

In the $K=3$ case, we saw how to find the number of compatible $l$ vectors for a given $t$ vector: calculating $K_0(t)$ and $K_1(t)$ and then using the stars and bars theorem with $N-(2K_0(t)+K_1(t))$ indistinguishable balls and $K$ distinguishable boxes. We can formulate the enumeration of $\mathcal{T}_{N,K}$ for general $K$ as follows:

\begin{proposition} \label{prop:trees-per-string}
The number of multifurcating ranked tree shapes with $N$ tips and ordered vector $t$ of the string representation of size $K$ is 
\[\binom{N-2K_0(t) - K_1(t) + K-1}{K-1}.\] 
The number of multifurcating ranked tree shapes with $N$ tips and $K$ internal nodes is \[ G(N,K)= \sum_{\substack{t \in \mathbb{N}^K: \; t_1=0, \\ t_i \in [i-1] \text{ for } i=2,3,\ldots,K}} \binom{N-2K_0(t) - K_1(t) + K-1}{K-1} \] where $[i]=\{1,2,\ldots,i\}$. 
\end{proposition}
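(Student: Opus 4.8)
The plan is to exploit the bijection established in Proposition~\ref{prop:string-bijection}: every multifurcating ranked tree shape with $N$ tips and $K$ internal nodes corresponds to exactly one pair of vectors $\{t,l\}$ satisfying S1--S4. Hence counting trees is the same as counting admissible pairs, and since the first formula fixes $t$, I would first count the number of $l$ vectors compatible with a single fixed ordered vector $t$ (satisfying S1), and then sum the resulting count over all admissible $t$ to obtain $G(N,K)$.

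For a fixed $t$, the task reduces to counting nonnegative integer vectors $l=(l_1,\dots,l_K)$ that satisfy the weight constraint S2, $\sum_{i=1}^K l_i = N$, subject to the per-coordinate lower bounds imposed by S3 and S4. The key observation is that the lower bound on $l_j$ depends only on the number of times the label $j$ occurs in $t$, and that this splits into exactly three mutually exclusive cases: if $j$ never appears in $t$ (it subtends no internal node) then S3 forces $l_j \geq 2$; if $j$ appears exactly once then S4 forces $l_j \geq 1$; and if $j$ appears two or more times then $j$ already has at least two internal children, so no pendant leaf is required and $l_j \geq 0$. By definition there are $K_0(t)$ labels of the first type and $K_1(t)$ of the second, so the total of the minimal required leaves is $2K_0(t) + K_1(t)$.

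I would then perform the standard change of variables, writing $l_j = l_j' + c_j$ where $c_j \in \{2,1,0\}$ is the lower bound in the respective case, so that each $l_j' \geq 0$ is a free nonnegative integer. Substituting into S2 turns the problem into counting nonnegative solutions of $\sum_{j=1}^K l_j' = N - 2K_0(t) - K_1(t)$. Applying the stars-and-bars count with $M := N - 2K_0(t) - K_1(t)$ indistinguishable balls and $K$ distinguishable boxes gives $\binom{M + K - 1}{K-1} = \binom{N - 2K_0(t) - K_1(t) + K - 1}{K-1}$, which is the claimed per-$t$ formula. Summing over every $t$ with $t_1 = 0$ and $t_i \in [i-1]$ then yields the stated expression for $G(N,K)$, since distinct $t$ index disjoint families of trees under the bijection.

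The routine parts are the change of variables and stars-and-bars. The one point that needs care is the three-way case analysis: I must confirm that the appearance count of $j$ in $t$ exactly captures the number of internal children of node $j$ (true because the edge set $E_1 = \{(t_i,i)\}$ records precisely the internal-node parentages), and that these three cases exhaust S3--S4 with the ``$\geq 2$ appearances'' case carrying no constraint. I would also note the degenerate regime: for some admissible $t$ the quantity $2K_0(t)+K_1(t)$ may exceed $N$, so that no compatible $l$ exists; here $M<0$ and, with the convention $\binom{n}{k}=0$ for $n<k$, the binomial coefficient correctly returns $0$, so those $t$ contribute nothing to the sum and the formula remains valid without an explicit feasibility restriction.
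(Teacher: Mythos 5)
Your proposal is correct and follows essentially the same route as the paper: invoke the string-representation bijection, reduce the count of compatible $l$ vectors for fixed $t$ to a stars-and-bars problem after subtracting the forced leaves $2K_0(t)+K_1(t)$, and sum over admissible $t$. Your explicit change of variables and your remark that the convention $\binom{n}{k}=0$ for $n<k$ handles infeasible $t$ are welcome details the paper only makes explicit later (in the proof of Theorem~\ref{thm:enumeration-result-final}), but they do not constitute a different argument.
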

\begin{proof}
For an arbitrary $t$, the values of $K_0(t)$ and $K_1(t)$ dictate how many leaves must already be assigned to certain parent nodes. The number of compatible $l$ vectors is therefore equivalent to the number of ways to place $N-2K_0(t) - K_1(t)$ balls into $K$ labeled boxes with no restrictions: $\binom{N-2K_0(t) - K_1(t) + K-1}{K-1}$. The second result then directly follows by summing over all possible $t$ vectors of length $K$. By definition of $t$, each element $t_i \in \{1,2,\ldots,i-1\}$ for $i=2,3,\ldots,K$ and $t_1=0$, which defines the set we are summing over. 
\end{proof}

The enumerative result in Proposition~\ref{prop:trees-per-string} requires summing over $(K-1)!$ elements because each element $t_i$ for $i=2,3, \ldots,K$ can take $i-1$ possible values. However, the summand only depends on $t$ via $K_0(t), K_1(t)$. For example, when $K=3$, the only two possible pairs are $(k_0,k_1)=(2,0)$ and $(k_0,k_1)=(1,2)$, each having one corresponding $t$ vector ($t=(0,1,1)$ and $t=(0,1,2)$ respectively). When $K=4$, there are three possible pairs for the six $t$-vectors: 
\begin{itemize}
    \item $(k_0,k_1)=(1,3)$ corresponds to 1 $t$ vector $t=(0,1,2,3)$,
    \item $(k_0,k_1)=(2,1)$ corresponds to 4 $t$ vectors: $t=(0,1,1,2), (0,1,1,3), (0,1,2,1), (0,1,2,2)$, 
    \item $(k_0,k_1)=(3,0)$ corresponds to 1 $t$ vector $t=(0,1,1,1)$.
\end{itemize}
Therefore, we can find a simplified and tractable expression for $G(N,K)$ by 1) finding the valid pairs of $(k_0,k_1)$ such that $K_0(t)=k_0$ and $K_1(t)=k_1$ for $t$ vectors of length $K$, and 2) counting the number of length $K$ $t$ vectors with $K_0(t)=k_0, K_1(t)=k_1$ for valid pairs $(k_0, k_1)$. \\

\noindent \textbf{Finding the set of valid $(k_0, k_1)$ pairs:} We can explicitly write down the set of pairs of $(k_0,k_1)$ such that $K_0(t)=k_0$ and $K_1(t)=k_1$ for $t$ vectors of length $K$. It is clear that $1 \leq K_0(t) \leq K-1$, and the next result details the possible value of $K_1(t)$ for each possible values of $K_0(t)$. 
\begin{proposition}\label{prop:valid-k0-k1-pairs}
Let $t$ be a valid length $K\geq 2$ vector. Then given $K_0(t)$, the values $K_1(t)$ can take are: 
\begin{align*}
    &K_0(t) = 1 \; \Rightarrow\; K_1(t) = K-1, \\
    & K_0(t) = K-1 \; \Rightarrow \; K_1(t) = 0, \\
    &K_0(t) = k_0  \in \{2, 3, \ldots, K-2\} \; \Rightarrow \; K_1(t) \in \Big \{ \max\{0, K-2k_0+1\}, ..., K-1-k_0 \Big \}.
\end{align*}
\end{proposition}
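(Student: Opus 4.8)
The plan is to reinterpret the vector $t$ as the parent-pointer encoding of a rooted \emph{increasing} tree $\tau$ on the $K$ internal nodes, in which node $j$ has out-degree (number of internal-node children) $d_j = |\{i : t_i = j\}|$. Under this dictionary $K_0(t)$ and $K_1(t)$ are exactly the numbers of nodes of out-degree $0$ and $1$, and I set $k_{\geq 2} = K - K_0(t) - K_1(t)$ for the number of nodes of out-degree at least $2$. Since $\tau$ is a tree on $K$ nodes, $\sum_{j=1}^K d_j = K-1$, and the whole proposition is driven by the single ``excess-degree'' identity this yields: $\sum_{j : d_j \geq 1}(d_j - 1) = (K-1) - (K - K_0(t)) = K_0(t) - 1$. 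Because only out-degree-$\geq 2$ nodes contribute positively to the left side (each at least $1$), this immediately gives $k_{\geq 2} \leq K_0(t) - 1$, with $k_{\geq 2} = 0$ precisely when $K_0(t) = 1$.

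First I would prove necessity, that any valid $t$ has $K_1(t)$ in the stated range. Substituting $k_{\geq 2} = K - K_0(t) - K_1(t)$ into $k_{\geq 2} \leq K_0(t) - 1$ rearranges to the lower bound $K_1(t) \geq K - 2K_0(t) + 1$, which together with $K_1(t) \geq 0$ gives $\max\{0,\, K - 2K_0(t)+1\}$. For the upper bound, when $K_0(t) \geq 2$ the identity forces $k_{\geq 2} \geq 1$, i.e.\ $K - K_0(t) - K_1(t) \geq 1$, which is exactly $K_1(t) \leq K - 1 - K_0(t)$. The two boundary cases drop out of the same identity: $K_0(t) = 1$ forces $k_{\geq 2} = 0$ and hence $K_1(t) = K - 1$, while $K_0(t) = K-1$ forces $k_{\geq 2} = 1$ and $K_1(t) = 0$ (the degeneracy $K=2$ merging with the first boundary case).

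Second, and this is where the real work lies, I would prove achievability: for every pair $(k_0, k_1)$ with $k_0 \in \{2, \ldots, K-2\}$ and $k_1$ in the claimed interval (and for the two boundary $k_0$ values), there is an actual valid $t$ realizing $K_0(t) = k_0$ and $K_1(t) = k_1$. I would first exhibit a target out-degree multiset: take $k_0$ zeros, $k_1$ ones, and distribute the remaining mass $(K-1) - k_1$ among the $k_{\geq 2} = K - k_0 - k_1$ branch nodes with each receiving at least $2$; this distribution exists exactly because the inequality $(K-1) - k_1 \geq 2 k_{\geq 2}$ is equivalent to the lower bound $k_1 \geq K - 2k_0 + 1$. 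I would then realize this multiset as an increasing tree by sorting the degrees into $d_1 \geq \cdots \geq d_K$, assigning $d_j$ to label $j$, and attaching nodes $2, \ldots, K$ greedily, each to an available smaller-labeled node. The enabling fact, which I would prove by a short contradiction argument using the sorting, is the prefix inequality $\sum_{j=1}^m d_j \geq m$ for all $m \leq K-1$: it guarantees a free parent slot at every step, and since the total number of slots equals the number of non-root nodes, the greedy process terminates having realized the degree sequence exactly. The resulting $t$ satisfies constraint S1 and attains the prescribed $(K_0, K_1)$; the endpoints $k_0 = 1$ and $k_0 = K-1$ are realized by the path $t=(0,1,2,\ldots,K-1)$ and the star $t=(0,1,1,\ldots,1)$ respectively.

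The main obstacle is the achievability direction: the necessity bounds fall out almost mechanically from the excess-degree identity, whereas achievability requires simultaneously respecting the label-increasing constraint $t_i < i$ and hitting a prescribed out-degree profile. The cleanest route, as above, is to decouple it into (i) existence of a degree multiset with the right counts, handled by the budget inequality that coincides with the lower bound, and (ii) realizability of any sorted degree sequence as an increasing tree, handled once and for all by the prefix inequality and greedy attachment. I would also emphasize that the endpoint cases $K_0(t) \in \{1, K-1\}$ are listed separately only because the generic interval formula degenerates there, not because they need a different argument.
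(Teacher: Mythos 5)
Your proof is correct, and it is in fact more complete than the paper's own argument. For the necessity direction the two proofs are essentially the same computation in different clothing: the paper works with the pair of relations $K_0+K_1+K_{\geq 2}=K$ and $K_1+2K_{\geq 2}\leq K-1$, while you condense them into the single excess-degree identity $\sum_{j:d_j\geq 1}(d_j-1)=K_0(t)-1$; your packaging has the advantage of making explicit why $k_{\geq 2}\geq 1$ whenever $K_0(t)\geq 2$, which is exactly the step needed for the upper bound $K_1(t)\leq K-1-k_0$ and which the paper leaves implicit (it derives only $K_{\geq 2}\leq k_0-1$ in displayed math and then asserts the interval). The genuine divergence is your achievability direction: the paper simply declares ``this is the set of all possible $(k_0,k_1)$ pairs'' without showing that every pair in the interval is realized by some valid $t$, whereas you construct a witness for each pair by (i) exhibiting a degree multiset with $k_0$ zeros, $k_1$ ones, and the residual mass $(K-1)-k_1$ spread over $K-k_0-k_1$ nodes in blocks of at least two --- possible precisely because of the lower bound $k_1\geq K-2k_0+1$ --- and (ii) realizing any such sorted degree sequence as an increasing tree via the prefix inequality $\sum_{j=1}^m d_j\geq m$ and greedy attachment. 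This realizability lemma is absent from the paper, and it is needed for the proposition's claim (``the values $K_1(t)$ can take'') and for the downstream interpretation of Corollary~4 as counting exactly the achievable pairs; your observation that the $K=2$ degeneracy makes the two boundary cases collide (so that $K_0=K-1=1$ forces $K_1=1$, not $0$) is also a correct refinement of the statement as written.
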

\begin{proof}
When $K_{0}=1$, the number of nodes that do not subtend internal nodes is 1, so the rest of $K-1$ internal nodes must subtend another internal nodes. In this case, the corresponding $t$ vector must be $t=(0,1,2,\ldots,K-1)$ and $K_1=K-1$. Now, if $K_0=K-1$, then only node 1 (the root) subtends internal nodes and the corresponding $t$ vector must be $t=(0,\underbrace{1,1,\ldots,1}_{K-1 \text{ times}})$ and so $K_1=0$. \\

\noindent For fixed $K_0(t)= k_0 \in \{2, 3, \ldots, K-2\}$, let $K_{\geq 2}(t)$ be the number of internal nodes that subtend at least 2 internal nodes.
The values $K_0(t), K_1(t), K_{\geq 2}(t)$ must satisfy $K_0(t) + K_1(t) + K_{\geq 2}(t) = K$ since the total number of internal nodes is $K$ and $K_1(t) + 2 K_{\geq 2}(t) \leq K-1$ because the total number of non-zero entries in $t$ is $K-1$ and $K_{\geq 2}(t)$ elements appear at least twice. Immediately, this implies 
\begin{align*}
    & K_1(t) + K_{\geq 2}(t) = K-K_0 \text{ and } K_1(t) + 2 K_{\geq 2}(t) \leq K-1 \\
    \Rightarrow \;\; & K_{\geq 2}(t) \leq K-1-(K-k_0) = k_0-1 \\
    \Rightarrow \;\; & K_1(t) \geq (K-k_0) -(k_0-1) = K-2k_0+1 
\end{align*}
Hence, for fixed $K_0(t)= k_0 \in \{2,...,K-2\}$, the values $K_1(t)$ can take are $K_1(t) \in \big \{\max\{0, K-2k_0+1\},..., K-1-k_0\big \}$. This the set of all possible $(k_0, k_1)$ pairs. 
\end{proof}

\begin{corollary}\label{eq:k0_k1_pairs_cardinality}
Let $\mathcal{K}_K$ denote the set of all valid $(k_0, k_1)$ pairs in Proposition~\ref{prop:valid-k0-k1-pairs}. The number of valid $(k_0, k_1)$ pairs given $K$ is $|\mathcal{K}_K| = \lfloor (K-1)^2/4\rfloor + 1$, which is OEIS sequence A033638.
\end{corollary}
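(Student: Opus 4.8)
The plan is to compute $|\mathcal{K}_K|$ directly as a sum over the admissible values of $k_0$ of the number of admissible values of $K_1(t)$, using the classification from Proposition~\ref{prop:valid-k0-k1-pairs}, and then to recognize the resulting expression as a quarter-square. Concretely, for each $k_0 \in \{1,\ldots,K-1\}$ I would count the integers in the interval prescribed for $K_1(t)$ and add these counts.

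First I would isolate the case $k_0 = 1$, which is genuinely exceptional: it contributes the single pair $(1,K-1)$, whereas the interval formula $\{\max\{0,K-2k_0+1\},\ldots,K-1-k_0\}$ would return an empty set there. I therefore peel this case off, contributing $1$, and treat the remaining $k_0 \in \{2,\ldots,K-1\}$ uniformly (one checks that $k_0 = K-1$ already obeys the Case~3 formula, yielding the single value $K_1 = 0$, so it needs no separate handling). For $k_0 \geq 2$ the number of integers in $\{\max\{0,K-2k_0+1\},\ldots,K-1-k_0\}$ is
\[
(K-1-k_0) - \max\{0,\,K-2k_0+1\} + 1 = K - k_0 - \max\{0,\,K-2k_0+1\}.
\]
Splitting on the sign of $K-2k_0+1$, that is, on whether $k_0 \leq (K+1)/2$, collapses this to $k_0-1$ in the ``left'' regime and $K-k_0$ in the ``right'' regime. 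Both are captured by the single expression $\min\{k_0-1,\,K-k_0\}$, since $k_0-1 \leq K-k_0 \iff k_0 \leq (K+1)/2$.

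This reduces the corollary to evaluating a triangular sum. Writing $j = k_0-1$ and noting that the $k_0=1$ (i.e.\ $j=0$) term of $\min\{j,K-1-j\}$ vanishes, I would fold the peeled-off $1$ together with the remaining sum to obtain
\[
|\mathcal{K}_K| = 1 + \sum_{j=1}^{K-2}\min\{j,\,K-1-j\}.
\]
The inner sum is the standard quarter-square identity: for any integer $M$, one has $\sum_{j=1}^{M-1}\min\{j,\,M-j\} = \lfloor M^2/4\rfloor$, proved by summing the increasing part $1,2,\ldots$ up to the midpoint together with its symmetric decreasing part, handling $M$ even and $M$ odd separately. Applying this with $M = K-1$ yields $|\mathcal{K}_K| = \lfloor (K-1)^2/4\rfloor + 1$. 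Matching the first few values ($1,2,3,5,7,\ldots$ for $K=2,3,4,5,6$) against the appropriate offset then identifies the sequence as OEIS A033638.

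The only real obstacle is careful bookkeeping at the boundaries: verifying that $k_0=1$ must be handled separately while $k_0 = K-1$ need not, and keeping the floor-and-parity case split clean so that the extra $+1$ from the exceptional term lands correctly (one should sanity-check the degenerate small cases $K=2,3$, where the summation range is empty or a single term). Everything else is routine arithmetic on the intervals supplied by Proposition~\ref{prop:valid-k0-k1-pairs}.
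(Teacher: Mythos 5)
Your proposal is correct and takes essentially the same route as the paper's proof: both count, for each admissible $k_0$, the length of the prescribed interval of $k_1$ values ($k_0-1$ in the regime where $K-2k_0+1\geq 0$, and $K-k_0$ otherwise) and sum over $k_0$. The only difference is bookkeeping: you unify the two regimes as $\min\{k_0-1,\,K-k_0\}$ and defer the parity split to the standard quarter-square identity $\sum_{j=1}^{M-1}\min\{j,\,M-j\}=\lfloor M^2/4\rfloor$, whereas the paper treats $K$ even and $K$ odd separately with explicit triangular sums (peeling off both boundary pairs $(1,K-1)$ and $(K-1,0)$ rather than only the genuinely exceptional $k_0=1$), arriving at the same totals $c^2-c+1$ and $c^2+1$.
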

The proof of Corollary~\ref{eq:k0_k1_pairs_cardinality} is in Appendix~\ref{subsec:string-proofs}. Instead of summing over $(K-1)!$ elements, we only need to sum over $\Theta(K^2)$ elements to find the cardinality of $\mathcal{T}_{N,K}$. \\ 

\noindent \textbf{Counting the number of length $K$ $t$ vectors with $K_0(t)=k_0, K_1(t)=k_1$:} After finding all possible pairs that $(K_0(t), K_1(t))$ can take, we need count how many $t$ vectors there are for any fixed pair. We can think of each length $K$ $t$ vector as a length $K-1$ $t$ vector with an additional element: $\tilde{t} = (t, t_K)$. Based on the added element $t_K$, we can express the changes in $K_0(\tilde{t})$ vs $K_0(t)$ and $K_1(\tilde{t})$ vs $K_1(t)$. For example, in the case of $K=4$, we can construct all six length $4$ $t$ vectors from the two length 3 $t$ vectors: 
\begin{itemize}
    \item $t=(0,1,1) \rightarrow \tilde{t} = (0,1,1,1)$: $t_4=1$ is an element that appeared twice in $t$, so $K_0(\tilde{t})= 3= K_0(t)+1$ and $K_1(\tilde{t}) = 0 = K_1(t)$. 
    \item $t=(0,1,1)\rightarrow \tilde{t} = (0,1,1,2)$ or $(0,1,1,3)$: $t_4=2$ or 3 is an element that did not appear in $t$, so $K_0(\tilde{t})= 2= K_0(t)$ and $K_1(\tilde{t}) = 1 = K_1(t)+1$. 
    \item $t=(0,1,2) \rightarrow \tilde{t} = (0,1,2,1)$ or $(0,1,2,2)$: $t_4=1$ or 2 is an element that appeared exactly once in $t$, so $K_0(\tilde{t})= 2= K_0(t)+1$ and $K_1(\tilde{t}) = 1 = K_1(t)-1$. 
    \item $t=(0,1,2) \rightarrow \tilde{t} =  (0,1,2,3)$: $t_4=3$ is an element that did not appear in $t$, so $K_0(\tilde{t})= 1= K_0(t)$ and $K_1(\tilde{t}) = 3 = K_1(t)+1$. 
\end{itemize}
There are four length 4 $t$ vectors with $K_0(t)=2$ and $K_1(t)=1$, two of which come from adding an element to $(0,1,1)$ and two of which come from adding an element to $(0,1,2)$. Breaking down the cases of $t_K$ being an element that appeared zero times, one time, or two or more times in $t$ allows us to recursively count how many length $K$ $t$ vectors there are with $K_0(t)=k_0, K_1(t)=k_1$. 

\begin{proposition} \label{prop:string-counting-recursion}
The number of $t$ vectors in the string representation of multifurcating ranked tree shapes with $K$ internal nodes and $N$ leaves with $K_{0}(t)=k_{0}$ and $K_{1}(t)=k_{1}$ is the number $A_K(k_{0},k_{1})$ that satisfy the following recursion:
\begin{align} \label{eq:K0_K1_recursionb}
    A_K(k_{0}, k_{1}) &= A_{K-1}(k_{0}-1, k_{1}) \times (K-k_{0}-k_{1}) + \\
    &\qquad A_{K-1}(k_{0}-1, k_{1}+1)\times (k_{1}+1) + \nonumber \\
    &\qquad A_{K-1}(k_{0}, k_{1}-1)\times k_{0}.  \nonumber
    \nonumber
\end{align}
with initial values: $A_K(1,K-1)=1$, $A_K(K-1,0)=1$. If $(k_0, k_1) \notin \mathcal{K}_K$, then $A_K(k_0, k_1)=0$.
\end{proposition}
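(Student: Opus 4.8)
The plan is to prove the recursion by a direct combinatorial argument that decomposes every valid length-$K$ vector according to its last entry. First I would record the elementary \emph{append} bijection: every valid length-$K$ vector $\tilde t=(t_1,\dots,t_{K-1},t_K)$ is obtained uniquely from the valid length-$(K-1)$ vector $t=(t_1,\dots,t_{K-1})$ together with a choice $t_K\in\{1,2,\dots,K-1\}$, and conversely any such pair produces a valid length-$K$ vector, because the only constraint on position $K$ is $t_K\le K-1$ while the conditions S1 on the prefix are inherited from $t$. Thus counting length-$K$ vectors with prescribed $(K_0,K_1)$ reduces to classifying, for each predecessor $t$, which choices of $t_K$ land in the target class.

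The key structural fact is that appending $t_K$ always creates the brand-new node $K$, which appears zero times in $\tilde t$ (since $t_K\le K-1<K$), so node $K$ is always added to the $K_0$ count; the rest of the change comes from the value $t_K$ itself, whose multiplicity in $t$ rises by one. I would then split into three cases according to the multiplicity of the value $t_K$ in $t$. If $t_K$ names a node appearing zero times in $t$, that node migrates from the $K_0$ class to the $K_1$ class, so $(K_0,K_1)$ changes by $(0,+1)$ and there are $K_0(t)$ admissible choices. If $t_K$ names a node appearing exactly once, it migrates from $K_1$ to $K_{\ge 2}$, giving change $(+1,-1)$ with $K_1(t)$ choices. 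If $t_K$ names a node appearing at least twice, it stays in $K_{\ge 2}$, giving change $(+1,0)$ with $K_{\ge 2}(t)$ choices. These three displacements are precisely the ones tabulated in the $K=4$ worked example preceding the statement.

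To assemble the recursion I would invert each case: a length-$K$ vector with $(K_0,K_1)=(k_0,k_1)$ arises from a predecessor with $(k_0,k_1-1)$ via the first case, from $(k_0-1,k_1+1)$ via the second, and from $(k_0-1,k_1)$ via the third. Weighting each predecessor count $A_{K-1}(\cdot,\cdot)$ by its number of admissible $t_K$ choices yields the three summands of \eqref{eq:K0_K1_recursionb}. The only coefficient requiring a short computation—and the point I expect to be the main, though mild, obstacle—is the third-case weight $K_{\ge 2}(t)$: using the identity $K_0(t)+K_1(t)+K_{\ge 2}(t)=K-1$ (every node in $\{1,\dots,K-1\}$ lies in exactly one multiplicity class), a predecessor with $(k_0-1,k_1)$ contributes $K_{\ge 2}(t)=(K-1)-(k_0-1)-k_1=K-k_0-k_1$ choices, matching the first coefficient exactly. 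I would also note that out-of-range predecessors contribute nothing automatically: any pair $(k_0,k_1-1)$, $(k_0-1,k_1+1)$, or $(k_0-1,k_1)$ falling outside $\mathcal{K}_{K-1}$ has $A_{K-1}=0$ by Proposition~\ref{prop:valid-k0-k1-pairs}, so degenerate terms such as the $k_1=0$ instance (where $k_1-1=-1$) vanish without special handling. Finally I would check the base data directly, observing that $A_K(1,K-1)=1$ is realized only by $t=(0,1,2,\dots,K-1)$ and $A_K(K-1,0)=1$ only by $t=(0,1,1,\dots,1)$, the two extreme configurations identified in Proposition~\ref{prop:valid-k0-k1-pairs}.
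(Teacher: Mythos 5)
Your proposal is correct and follows essentially the same route as the paper's proof: the append bijection $\tilde t=(t,t_K)$, the three-way case split according to the multiplicity of the value $t_K$ in the predecessor, the coefficient count via $K_0(t)+K_1(t)+K_{\ge 2}(t)=K-1$, and the identification of the two extreme vectors for the base cases. Your explicit observation that the new node $K$ always enters the $K_0$ class is a slightly cleaner way of phrasing the displacement bookkeeping, but it is the same argument.
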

\begin{proof}
For the base cases, we showed in the proof of Proposition~\ref{prop:valid-k0-k1-pairs} that there is only one $t$ vector corresponding to $K_0(t)=1, K_1(t)=K-1$ and only one $t$ vector corresponding to $K_0(t)=K-1, K_1(t)=0$, meaning $A_K(1,K-1)=1$, $A_K(K-1,0)=1$ for all $K$. \\

\noindent To find the recursion, consider a length $K-1$ $t$ vector and construct $\tilde{t} = (t, t_K)$ of length $K$, where $t_K \in \{1,2,...,K-1\}$. Let $K_0(\tilde{t}) = k_0, K_1(\tilde{t})= k_1$ and consider how many $t$ vectors and values of $t_K$ could result in this. 
\begin{itemize}
    \item If $t_K$ is equal to an element that appears twice or more in $t$, then $K_0(t)= k_0-1$ because $K$ is the new element that does not appear and $K_1(t) = k_1$ still. There are $K-k_0-k_1$ such elements to choose for $t_K$ and $A_{K-1}(k_0-1, k_1)$ such length $K-1$ $t$ vectors. 
    \item If $t_K$ is equal to an elements that appears once in $t$, it will now appear twice in $\tilde{t}$. This implies $K_1(t) = k_1 +1$ and $K_0(t)= k_0-1$. There are $k_1+1$ such elements to choose for $t_K$ and $A_{K-1}(k_0-1, k_1+1)$ such length $K-1$ $t$ vectors. 
    \item If $t_K$ is equal to an element that does not appears in $t$, then $K_0(t) = k_0 +1-1=k_0$ because the element $t_K$ now appears but $K$ does not appear, and $K_1(t) = k_1-1$. There are $k_{0}$ such elements to choose for $t_K$ and $A_{K-1}(k_0, k_1-1)$ such length $K-1$ $t$ vectors. Note this case includes $t_K=K-1$, which is an element that will never appear in $t$. 
\end{itemize}
Combining these three cases, we get the recursive expression of $A_K(k_{0}, k_{1})$, where by convention $A_K(k_0, k_1)=0$ if $(k_0, k_1) \notin \mathcal{K}_K$.
\end{proof}

\begin{corollary}\label{eq:k0_cardinality}
The number of length $K$ $t$-vectors with $K_0(t)=k_0$ is equal to the Eulerian number $E(K-1, k_0)$.
\end{corollary}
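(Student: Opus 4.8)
The plan is to marginalize the two-parameter recursion of Proposition~\ref{prop:string-counting-recursion} over $k_1$ and show that the resulting one-parameter quantity satisfies the Eulerian recurrence. Write $B_K(k_0) = \sum_{k_1} A_K(k_0, k_1)$; by Proposition~\ref{prop:string-counting-recursion} this is exactly the number of length-$K$ $t$-vectors with $K_0(t) = k_0$, so the claim is $B_K(k_0) = E(K-1, k_0)$. Conceptually, the reason an Eulerian number should appear is that a valid $t$-vector is precisely the parent array of an increasing (recursive) tree on the vertex set $\{1, \ldots, K\}$ rooted at $1$, with $K_0(t)$ counting the leaves of that tree; the leaf statistic of recursive trees is classically Eulerian. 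I would nonetheless give a self-contained argument by induction, since the recursion for $A_K$ is already in hand.

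First I would fix the convention, taking $E(n,k)$ to be the number of permutations of $[n]$ with $k$ ascending runs (equivalently $k-1$ descents), so that $E(n,k) = k\,E(n-1,k) + (n-k+1)\,E(n-1,k-1)$ with $E(1,1)=1$ and $E(n,k)=0$ for $k \le 0$ or $k > n$. This is the indexing under which the small cases computed earlier in the text agree with $B_K$: for $K=3$ the row is $(1,1)$ and for $K=4$ the row is $(1,4,1)$, matching the counts $B_3(1)=B_3(2)=1$ and $B_4(1)=1,\,B_4(2)=4,\,B_4(3)=1$.

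Next comes the main step: summing the three-term recursion of Proposition~\ref{prop:string-counting-recursion} over all $k_1$. After the shift $k_1 \mapsto k_1-1$, the third term contributes $k_0 \sum_{k_1} A_{K-1}(k_0, k_1-1) = k_0\,B_{K-1}(k_0)$. The first two terms are the crux: reindexing the second by $j = k_1+1$ turns it into $\sum_j A_{K-1}(k_0-1, j)\cdot j$, while the first is $\sum_j A_{K-1}(k_0-1, j)\,(K - k_0 - j)$, so for each fixed $j$ the $j$-dependent weights cancel and the pair collapses to $(K - k_0)\sum_j A_{K-1}(k_0-1, j) = (K - k_0)\,B_{K-1}(k_0-1)$. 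Hence $B_K(k_0) = (K-k_0)\,B_{K-1}(k_0-1) + k_0\,B_{K-1}(k_0)$, which is exactly the Eulerian recurrence with $n = K-1$. With the base case $B_2(1)=1$ (from the single vector $t=(0,1)$) checked directly, induction on $K$ yields $B_K(k_0) = E(K-1,k_0)$.

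The one genuine obstacle is recognizing that the weights $(K - k_0 - k_1)$ and $(k_1+1)$ attached to the two ``$k_0-1$'' terms become the constant $K-k_0$ after the index shift; once that cancellation is spotted, everything reduces to matching a standard recurrence and verifying a base case. I would also take care that the summations range over all integers, with terms outside $\mathcal{K}_K$ vanishing by the convention $A_K \equiv 0$ there, so that the reindexing introduces no boundary corrections, and I would separately confirm the degenerate boundary at $K=2$, where the two stated initial values of Proposition~\ref{prop:string-counting-recursion} coincide.
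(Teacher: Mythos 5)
Your proof is correct, but it takes a genuinely different route from the paper's. The paper re-runs the combinatorial ``append an element'' argument from Proposition~\ref{prop:string-counting-recursion}, this time tracking only the statistic $K_0$: writing $\tilde{t}=(t,t_K)$, there are just two cases ($t_K$ appears in $t$, giving $K-k_0$ choices, or $t_K$ does not appear, giving $k_0$ choices), which directly yields $B_K(k_0)=(K-k_0)B_{K-1}(k_0-1)+k_0B_{K-1}(k_0)$ together with the base cases $B_K(1)=B_K(K-1)=1$. You instead treat the corollary as a formal consequence of Proposition~\ref{prop:string-counting-recursion}: summing its three-term recursion over $k_1$ and reindexing, the weights $(K-k_0-k_1)$ and $(k_1+1)$ attached to the two $(k_0-1)$-terms cancel to the constant $K-k_0$, collapsing to the identical two-term Eulerian recurrence. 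Both arguments are valid and land on the same induction. The paper's version buys self-containment and transparency---it needs no bookkeeping about where the refined recursion holds---while yours buys economy and a structural insight: the Eulerian recurrence is exhibited as the exact marginal of the refined $(k_0,k_1)$-recursion, so no new combinatorial reasoning is required. The one point your route genuinely depends on, which you correctly flag, is that the recursion of Proposition~\ref{prop:string-counting-recursion} holds for \emph{all} integer pairs under the convention $A_K(k_0,k_1)=0$ for $(k_0,k_1)\notin\mathcal{K}_K$ (so that summing over all $k_1$ introduces no boundary corrections); this is true because the combinatorial derivation of that proposition counts constructions producing any target pair, and in particular the stated ``initial values'' are themselves reproduced by the recursion, but it is worth stating explicitly since the paper presents them as separate seed values. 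Your aside identifying $t$-vectors with increasing trees on $\{1,\ldots,K\}$ and $K_0$ with the leaf count is also a correct (classical) explanation for why an Eulerian number must appear, though it is not the mechanism of either proof.
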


The proof is similar to that of Proposition~\ref{prop:string-counting-recursion} and is in Appendix~\ref{subsec:string-proofs}. Our final enumerative formula for $G(N,K)=|\mathcal{T}_{N,K}|$ combines together the three components: 1) the number of compatible $l$ vectors to a length $K$ $t$ vector with $K_0(t)= k_0, K_1(t)= k_1$; 2) the number of length $K$ $t$ vectors with $K_0(t)= k_0, K_1(t)= k_1$; and 3) the set of valid $(K_0(t), K_1(t)) = (k_0, k_1)$ pairs. 

\begin{theorem} \label{thm:enumeration-result-final}
The number of multifurcating ranked tree shapes with $N$ tips and $K$ internal nodes is
\[ G(N,K) = \binom{N-2}{K-1}+\binom{N-K+1}{K-1}+\sum_{k_0=2}^{K-2} \sum^{K-1-k_{0}}_{k_1=\max\{0,K-2k_{0}+1\}} A_K(k_0, k_1) \binom{N-2k_0 - k_1 + K-1}{K-1}, \] where $A_K(k_0, k_1)$ is equal to the number of $t$ vectors of length $K$ with $K_0(t) = k_0$ and $K_1(t)=k_1$.
\end{theorem}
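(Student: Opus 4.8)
The plan is to obtain the closed form by regrouping the sum already established in Proposition~\ref{prop:trees-per-string}, collecting its terms according to the value of the pair $(K_0(t), K_1(t))$. The crucial observation is that the summand $\binom{N - 2K_0(t) - K_1(t) + K - 1}{K-1}$ depends on the vector $t$ \emph{only} through the two statistics $K_0(t)$ and $K_1(t)$. Hence, rather than summing over each of the $(K-1)!$ vectors $t$ individually, I would partition the index set of all valid $t$ according to the pair $(k_0, k_1) = (K_0(t), K_1(t))$ and pull the constant binomial factor out of each block. By the definition of $A_K(k_0, k_1)$ established in Proposition~\ref{prop:string-counting-recursion}, the number of vectors $t$ in the block indexed by $(k_0, k_1)$ is exactly $A_K(k_0, k_1)$, so this regrouping yields
\[
G(N,K) = \sum_{(k_0, k_1) \in \mathcal{K}_K} A_K(k_0, k_1)\,\binom{N - 2k_0 - k_1 + K - 1}{K-1},
\]
where $\mathcal{K}_K$ is the set of valid pairs from Proposition~\ref{prop:valid-k0-k1-pairs} and Corollary~\ref{eq:k0_k1_pairs_cardinality}.

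The next step is to split $\mathcal{K}_K$ into the two boundary cases and the interior. From Proposition~\ref{prop:valid-k0-k1-pairs}, the extreme values $k_0 = 1$ and $k_0 = K-1$ each force a single value of $k_1$, namely the pairs $(1, K-1)$ and $(K-1, 0)$, and by the base cases of the recursion in Proposition~\ref{prop:string-counting-recursion} each of these pairs is realized by exactly one vector $t$, so $A_K(1, K-1) = A_K(K-1, 0) = 1$. Substituting these pairs into the binomial factor gives the two explicit terms: for $(1, K-1)$ the argument simplifies as $N - 2 - (K-1) + K - 1 = N - 2$, producing $\binom{N-2}{K-1}$, and for $(K-1, 0)$ it simplifies as $N - 2(K-1) + K - 1 = N - K + 1$, producing $\binom{N-K+1}{K-1}$. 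The remaining pairs, with $k_0$ ranging over $\{2, \ldots, K-2\}$ and $k_1$ over $\{\max\{0, K - 2k_0 + 1\}, \ldots, K - 1 - k_0\}$ as dictated by Proposition~\ref{prop:valid-k0-k1-pairs}, assemble into precisely the stated double sum, completing the derivation.

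I expect the only genuine subtlety, the rest being bookkeeping, to be confirming that the three blocks of $\mathcal{K}_K$ are disjoint and exhaustive, so that no pair is double-counted or omitted. In particular, the separation of the two boundary binomial terms from the interior double sum is legitimate exactly when $1$, $K-1$, and the interval $\{2, \ldots, K-2\}$ are mutually disjoint, that is when $K \geq 3$; the cases $K = 1$ and $K = 2$, in which $k_0 = 1$ and $k_0 = K-1$ coincide, are the degenerate ones already handled separately by the known values $G(N,1) = 1$ and $G(N,2) = N-2$. A useful consistency check I would carry out is to verify the formula against the independently derived value $G(N,3) = (N-2)(N-3)$: there the interior sum is empty, the two boundary terms are each $\binom{N-2}{2}$, and their sum is indeed $(N-2)(N-3)$.
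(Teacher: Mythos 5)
Your proposal is correct and takes essentially the same route as the paper's own proof: both regroup the sum of Proposition~\ref{prop:trees-per-string} by the value of the pair $(K_0(t),K_1(t))$, invoke Proposition~\ref{prop:valid-k0-k1-pairs} for the set of valid pairs and Proposition~\ref{prop:string-counting-recursion} for the counts $A_K(k_0,k_1)$, and peel off the boundary pairs $(1,K-1)$ and $(K-1,0)$, whose single $t$ vectors yield the explicit terms $\binom{N-2}{K-1}$ and $\binom{N-K+1}{K-1}$. Your added remarks on disjointness of the three blocks, the degenerate cases $K\leq 2$, and the check against $G(N,3)=(N-2)(N-3)$ are sound but supplementary; the core argument matches the paper's.
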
 
\begin{proof} 
Combining Propositions~\ref{prop:trees-per-string}-\ref{prop:string-counting-recursion} directly gives the desired result. The first two terms on the right hand side of $G(N,K)$ correspond to $(K_{0},K_{1})$ pairs $(1,K-1)$ and $(K-1,0)$. By convention, if $2k_0+k_1 > N$, the binomial coefficient term will be 0. We knew from the bijection of the string representation that there are no trees with $N$ tips and $K$ internal nodes whose string representation $t$ satisfies $2K_0(t)+ K_1(t) >N$, and this is implied by our enumeration. 
\end{proof}

\noindent \textbf{Remark:} The integer sequence of $\{A_K(k_0,k_1): (k_0,k_1) \in \mathcal{K}_K\}$ when sorted in increasing values of $k_0$ and $k_1$ is a refinement of the Eulerian numbers as expected, while another related sequence (OEIS Sequence A145271) is a further refinement of $\{A_K(k_0, k_1): (k_0, k_1) \in \mathcal{K}_K\}$. This sequence has connections to rooted tree objects in Hopf algebra, derivative operators, and Lagrange inversion \citep{copeland2010mathemagical}. Our integer sequence is not recorded in OEIS. For $K=8$, the three sequences are:  
\begin{itemize}
    \item $\{ B_K(k_0): k_0=1,...,K-1\}$: $1, 120, 1191, 2416, 1191, 120, 1$.
    \item $\{A_K(k_0,k_1): (k_0,k_1) \in \mathcal{K}_K\}$: $1, 120, 768, 423, 496, 1494, 426, 294, 741, 156, 98, 22, 1$.
    \item Integer sequence A145271 (reversed): $1, 120, 768, 423, 496, 1494, 426, 294, 267, 474, 156, 56, 42, 22, 1$. 
\end{itemize}

\subsection{Enumerative results}

We implemented the computation of $G(N,K)$ in R. The values of $G(N, N-1)$ computed by this formula match the Euler zig-zag numbers for binary ranked tree shapes. The enumerative results for $G(N,K)$ up to $N=12$ are shown in Table~\ref{tab:num_multif}, with the last column showing the values of $G(N) = |\mathcal{MT}_N| = \sum_{K=1}^{N-1} G(N, K)$. Note $G(N, 1)= 1, G(N, 2) = N-2$ for all $N$, so we only state results for $G(N,K), 3\leq K \leq N-1$. 

\begin{table}[H]
\begin{tabular}{l|lllllllll|r}
$N, K$ & 3  & 4   & 5    & 6    & 7    & 8     & 9  &10 & 11   & Total \\ \hline 
4    & 2  &     &      &      &      &       &   &&  & 5 \\
5    & 6  & 5   &      &      &      &       &   &&  &16 \\
6    & 12 & 21  & 16   &      &      &       &   &  & & 54\\
7    & 20 & 54  & 87   & 61   &      &       &   & &  & 228\\
8    & 30 & 110 & 276  & 413  & 272  &       &   & &  & 1,108\\
9    & 42 & 195 & 670  & 1,574 & 2,218 & 1,385  &   & & &6,092 \\
10   & 56 & 315 & 1,380 & 4,470 & 9,931 & 13,291 & 7,936 && & 37,388\\
11   &  72  & 476 & 2,541 &10,555 &32,475& 68,715& 87,963& 50,521 & & 253,327\\
12 & 90   & 684 &  4,312 & 21,931 & 86,885 &255,386 & 517,692 & 637,329 & 353,792 & 1,878,111
\end{tabular}
\caption{Enumerative results for the number of multifurcating ranked tree shapes with $N$ tips and $K$ internal nodes. Each row is a value of $N$ and each column is a value of $K$ for $K=3,...,N-1$. The last column is the value of $G(N)$, the total number of multifurcating ranked tree shapes with $N$ tips.}
\label{tab:num_multif}
\end{table}


To understand the growth order of $\mathcal{MT}_N$, from Proposition~\ref{prop:trees-per-string}, we have the result. 
\begin{equation}\label{eq:g_n_k_bound}
    G(N,K) \leq (K-1)! \times \binom{N-K-1}{K-1} \stackrel{(\ast)}{\leq} (K-1)! \times \frac{(N-K-1)^{K-1}}{(K-1)!} \leq N^{K-1}
\end{equation}
where $(\ast)$ holds because $\binom{N}{K}\leq \frac{N^K}{K!}$. In Appendix~\ref{subsec:polynomial_GNK}, we computed explicit polynomial expressions of $G(N,K)$ up to $K=8$, where $G(N,K)$ is a polynomial with leading term $N^{K-1}$. Specifically, Equation~\ref{eq:g_n_k_bound} implies
\begin{equation}
G(N)= |\mathcal{MT}_N| \leq \sum_{K=1}^{N-1} N^{K-1} \sim O(N^{N-2}), \qquad \log G(N) \sim O(N\log N). 
\end{equation}
See Appendix~\ref{subsec:cardinality_growth_rate} for some more numerical results on the growth rate of $G(N)$. \\

The cardinality of the space of ranked, unlabeled multifurcating trees is still drastically smaller than that of labeled multifurcating trees, whether ranked or unranked. For example for $N=11$, the number of ranked and unlabeled multifurcating trees is a bit more than 250,000 but there are more than 2 trillion ranked and labeled multifurcating trees. Enumerative results up to $N=12$ are shown in Table~\ref{tab:num_multif_compare}, where the numerical results for the two labeled multifurcating tree spaces are taken from the OEIS database \citep{sloane2003line}. Performing inference on the smaller tree space will allow for more efficient explorations of the latent genealogical space.

\begin{table}[H]
\begin{tabular}{r|rrr|}
$N$ & Ranked + Unlabeled & Unranked + Labeled & Ranked + Labeled \\ \hline 
3   & 2                  & 4                  & 4                \\
4   & 5                  & 26                 & 32               \\
5   & 16                 & 236                & 436              \\
6   & 54                 & 2,752              & 9,012            \\
7   & 228                & 39,208             & 262,760          \\
8   & 1,108              & 660,032            & 10,270,696      \\ 
9 & 6,092 & 12,818,912 & 518,277,560 \\ 
10 & 37,388 & 282,137,824 & 32,795,928,016 \\ 
11 & 253,327 & 6,939,897,856 & 2,542,945,605,432 \\
12 & 1,878,111 & 188,666,182,784 & 237,106,822,506,952
\end{tabular}
\caption{\textbf{Enumeration of three different multifurcating tree spaces including $\mathcal{MT}_N$.} Although the growth rate of the number of ranked and unlabeled trees is super exponential, it still grows much slower than the number of labeled trees (ranked or unranked). } 
\label{tab:num_multif_compare}
\end{table}

\section{Lattices on $\mathcal{MT}_N$} 
\label{sec:lattice}

In order to facilitate the study of the space of ranked, unlabeled multifurcating trees with $N$ tips, we construct a lattice on $\mathcal{MT}_N$ that exploits the natural relationship between tree topologies. A lattice would allow for a natural way to explore the space of multifurcating trees, and lead to definitions of distances and Markov chains on this space. Lattices have been defined on the set of binary perfect phylogenies in order to enumerate the number of compatible binary trees \citep{palacios2022enumeration}. In doing so, we also propose the multifurcating F-matrix representation of $T_{N,K}$, which is an extension of the F-matrix representation of binary trees \citep{Kim2020}. \\

We first establish some necessary definitions from lattice theory; more details can be found in \cite{nation1998notes}. A \textit{\textbf{partially ordered set}} (also called a poset) is $\mathcal{P} = (P, \leq)$, where $P$ is a nonempty set and $\leq$ is a binary relation on $P$ satisfying reflexivity, anti-symmetry, and transitivity. A \textit{\textbf{lattice}} $L=(P,\wedge, \vee)$ is a partially ordered set $\mathcal{P}$ with two binary operators meet $\wedge$ and join $\vee$ such that any two element set $\{x, y\} \subset L$ has a meet $x \wedge y= \text{GLB}\{x,y\}$ and a join $x \vee y=\text{LUB}\{x,y\}$. That is, any two elements has a unique least upper bound (LUB) and a unique greatest lower bound (GLB). Elements $x,y$ in a lattice $P$ satisfy $x\leq y$ if and only if $x \wedge y=x$. \\



We provide some more necessary definitions applicable to partially ordered sets. A poset $P$ is \textit{\textbf{bounded}} if it has a maximal element and a minimal element. For elements $x,z \in P$, we say $x$ \textit{\textbf{covers}} $z$ (denoted $z \prec x$) if $z \leq x$ and there does not exist $y$ such that $z \leq y \leq x$. The rank of a poset $P$ rk$(P)$ is the length of the longest chain in $P$, where a chain $\mathcal{C}$ is a subset of $P$ such that for all $x,y \in \mathcal{C}$, we have $x\leq y$ or $y\leq x$. The \textit{\textbf{Hasse diagram}} is a diagram to visualize finite posets: elements $x$ to $y$ are connected if $y$ covers $x$. We will use the following theorem to prove our defined poset $\mathcal{MT}_N$ is a lattice.

\begin{theorem}[Lemma 2.1 of \cite{bjorner1990hyperplane}] \label{thm:poset_2}
Let $P$ be a bounded poset with finite rank such that for all $x, y , z\in P$, such that if $x,y$ covers $z$ then $x \vee y = LUB(x,y)$ exists. Then $P$ is a lattice. 
\end{theorem}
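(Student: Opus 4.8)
The plan is to reduce the statement to the existence of joins and then to bootstrap the local cover hypothesis up to arbitrary pairs by induction. First I would observe that, because $P$ is bounded, it suffices to prove that \emph{every} pair of elements has a join: given a least element $\hat 0$ and assuming all binary (hence all finite) joins exist, the meet of any $x,y$ is recovered as the join of the set $L(x,y)=\{w : w\le x,\ w\le y\}$ of their common lower bounds. This set is nonempty since $\hat 0\in L(x,y)$ and is bounded above by $x$, so its join $m$ exists; since $x$ and $y$ are themselves upper bounds of $L(x,y)$, we get $m\le x$ and $m\le y$, whence $m\in L(x,y)$ is in fact the greatest lower bound. Thus $x\wedge y=m$ exists and, together with all joins, makes $P$ a lattice. (Here finiteness of $P$, as holds for $\mathcal{MT}_N$, keeps $L(x,y)$ finite.)

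The heart of the argument is then to show that every pair has a join, which I would prove by reverse induction on the height $h(z)$, the length of a longest chain from $\hat 0$ to $z$; this is finite by the finite-rank hypothesis and strictly increasing along covers. The claim to induct on is $R(z)$: every pair $x,y$ with $x,y\ge z$ admits a join $x\vee y$. The base case is $h(z)=\mathrm{rk}(P)$, which forces $z=\hat 1$, so the only pair is $(\hat 1,\hat 1)$ and the claim is trivial. For the inductive step, given $x,y\ge z$ I would dispose of the trivial cases $x=z$, $y=z$, $x=y$, and otherwise pick covers $x_1,y_1$ of $z$ with $z\prec x_1\le x$ and $z\prec y_1\le y$ (available because the intervals $[z,x]$ and $[z,y]$ have finite length). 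The cover hypothesis applied to $x_1,y_1\succ z$ yields $w:=x_1\vee y_1$ (or $w:=x_1$ if $x_1=y_1$). Since $h(x_1)>h(z)$, the inductive hypothesis $R(x_1)$ applies to the pair $x,w\ge x_1$ and produces $a:=x\vee w$; then $a\ge w\ge y_1$ and $y\ge y_1$, so $R(y_1)$ produces $b:=a\vee y$. Finally I would verify $b=x\vee y$: plainly $b\ge x$ and $b\ge y$, while for any upper bound $u$ of $\{x,y\}$ one has $u\ge x_1\vee y_1=w$, hence $u\ge x\vee w=a$, hence $u\ge a\vee y=b$, so $b$ is the least upper bound. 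This gives $R(z)$, and taking $z=\hat 0$ yields joins for all pairs.

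The main obstacle is precisely this passage from the purely local cover hypothesis to joins of arbitrary pairs: the difficulty is to arrange an induction that always exposes a common lower element to which the hypothesis can be applied. The device of stepping from $z$ up to two covers $x_1,y_1$, combining them via the hypothesis into $w$, and then re-absorbing $x$ and $y$ through the inductive hypothesis at strictly greater height is the crux, together with the monotonicity bookkeeping $u\ge x,y\ \Rightarrow\ u\ge w\ \Rightarrow\ u\ge a\ \Rightarrow\ u\ge b$ that certifies leastness. A minor point to keep track of is the reliance on finite rank, so that saturated chains and hence the covers $x_1,y_1$ exist and the reverse induction on $h$ terminates; this is immediate in our setting since $\mathcal{MT}_N$ is finite.
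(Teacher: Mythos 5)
The paper contains no proof of this statement to compare against: it is imported as Lemma 2.1 of \cite{bjorner1990hyperplane} and used as a black box, so your proposal must be judged on its own. Its core---the downward induction on height $h(z)$ showing that any $x,y\ge z$ have a join---is correct and is essentially the standard argument for this lemma. The covers $z\prec x_1\le x$ and $z\prec y_1\le y$ exist because the intervals $[z,x]$ and $[z,y]$ have finite length (and a cover inside an interval is a cover in $P$, since any $w$ with $z<w<x_1$ automatically lies in $[z,x]$); $h$ strictly increases along covers, so $R(x_1)$ and $R(y_1)$ are legitimately available at the inductive step; the base case $h(z)=\mathrm{rk}(P)$ forces $z=\hat{1}$ by boundedness; and the chain of implications $u\ge x,y \Rightarrow u\ge w \Rightarrow u\ge a \Rightarrow u\ge b$ does certify that $b$ is the least upper bound of $\{x,y\}$.

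The one genuine shortfall is in your reduction of meets to joins. The theorem hypothesizes a bounded poset of \emph{finite rank}, not a finite poset; your recovery of $x\wedge y$ as the join of the set $L(x,y)$ of common lower bounds requires a join of a possibly infinite set, and you explicitly retreat to finiteness of $P$ to avoid this. That is harmless for the paper's application, since $\mathcal{MT}_N\cup\{\emptyset\}$ is finite, but it proves strictly less than the stated lemma. The repair is short and uses only what you already have: by finite rank, $L(x,y)$ has a maximal element $m$; for any $m'\in L(x,y)$, the binary join $m\vee m'$ (which exists by your join induction) lies below both $x$ and $y$, hence lies in $L(x,y)$, so maximality forces $m\vee m'=m$, i.e.\ $m'\le m$. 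Thus $m$ is the greatest element of $L(x,y)$ and equals $x\wedge y$. With this substitution in place of the join-of-$L(x,y)$ step, your argument proves the lemma in the stated generality.
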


\subsection{Partial ordering on $\mathcal{MT}_N$}

We first need to establish a partial ordering on the space of multifurcating trees. We will do so by defining an operation on trees which collapses an edge $(e,e+1)$ between consecutive internal nodes. We call this operation the \textit{\textbf{collapse edge operation}}, and after applying the collapse edge operation, the new tree will have one less internal node. Labeling the root with label 1 and the first coalescent event (back in time) with label $K$, the edge $(1,2)$ always exists in every tree. Therefore, this operation is well-defined. Figure~\ref{fig:edge_collapse_def_example} shows two  examples of the collapse edge operation on a tree with 7 tips and 5 internal nodes. Here, there are two edges with consecutive nodes: $(1,2)$ and $(3,4)$. The right trees in Figure~\ref{fig:edge_collapse_def_example} shows the resulting trees after the corresponding collapse edge operation. \\

\begin{figure}[h]
    \centering
    \includegraphics[width=0.65\linewidth]{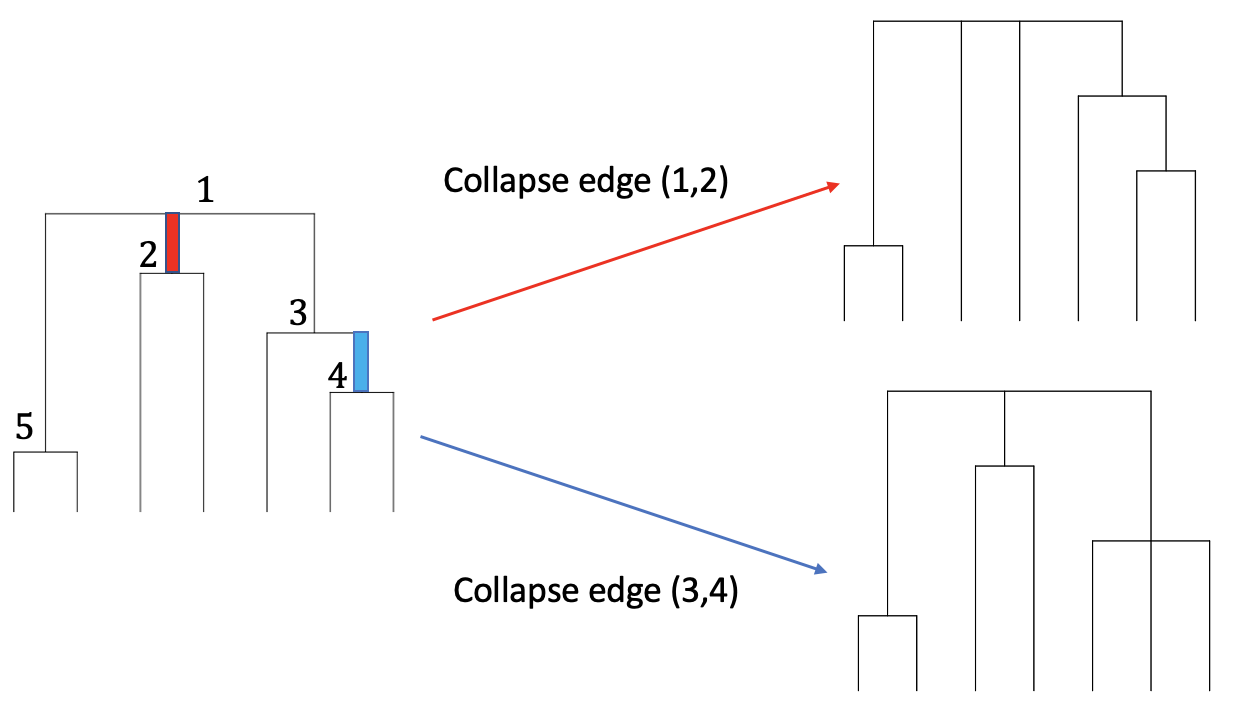}
    \caption{An illustration of the collapse edge operation on a tree with 7 tips and 5 internal nodes. There are two edges $(e,e+1)$ of consecutive nodes eligible for the collapse edge operation. The two trees on the right cover the tree on the left, as both can be obtained from one collapse edge operation.}
    \label{fig:edge_collapse_def_example}
\end{figure}

Using this operation, we can define a partially ordered set on $\mathcal{MT}_N \cup \emptyset$, where we include $\emptyset$ so that the set contains a unique minimal element, namely $\emptyset$. For $T_X, T_Y \in \mathcal{MT}_N$, we define $T_X \leq T_Y$ if $T_X=T_Y$ or $T_Y$ can be obtained from $T_X$ by sequentially collapsing edges that connect two consecutive internal nodes. If $T_X \leq  T_Y$, we call $T_X$ a refinement of $T_Y$. From this definition of a partial ordering, $T_Y$ covers $T_X$ if $T_Y$ is obtained from $T_X$ by only one collapse edge operation. The minimum element is $\emptyset$ and the maximum element is the star tree. We present the full Hasse diagrams for $N=4$ and $N=5$ in Figures~\ref{fig:lattice-4}-\ref{fig:lattice-5}, with the trees organized into columns of trees sharing the same number of internal nodes and arrows denoting the covering relations $T_X \prec T_Y$. It can be verified from the Hasse diagrams that every two elements have a unique least upper bound and greatest lower bound, meaning that this partial ordering induces a lattice. The unique LUB of $T_X, T_Y$ is the smallest tree that is refined by both (it will have the maximum number of internal nodes out of all trees that are refined by both) and the unique GLB of $T_X, T_Y$ is the largest tree that refines both (it will have the minimum number of internal nodes out of all trees that refine both). \\

Defining the partial ordering in terms of collapsing edges $(e,e+1)$ that connect consecutive internal nodes instead of collapsing any internal edge is key. The poset induced by the partial ordering of collapsing any internal edge is not a lattice (see Figure~\ref{fig:fake_lattice_N_5} for a counterexample). Additionally in this case, collapsing two different edges in the same tree could result in the same multifurcating ranked tree shape and the rankings of the internal nodes would not be preserved.\\ 

\begin{figure}[H]
    \centering
    \includegraphics[width=0.75\linewidth]{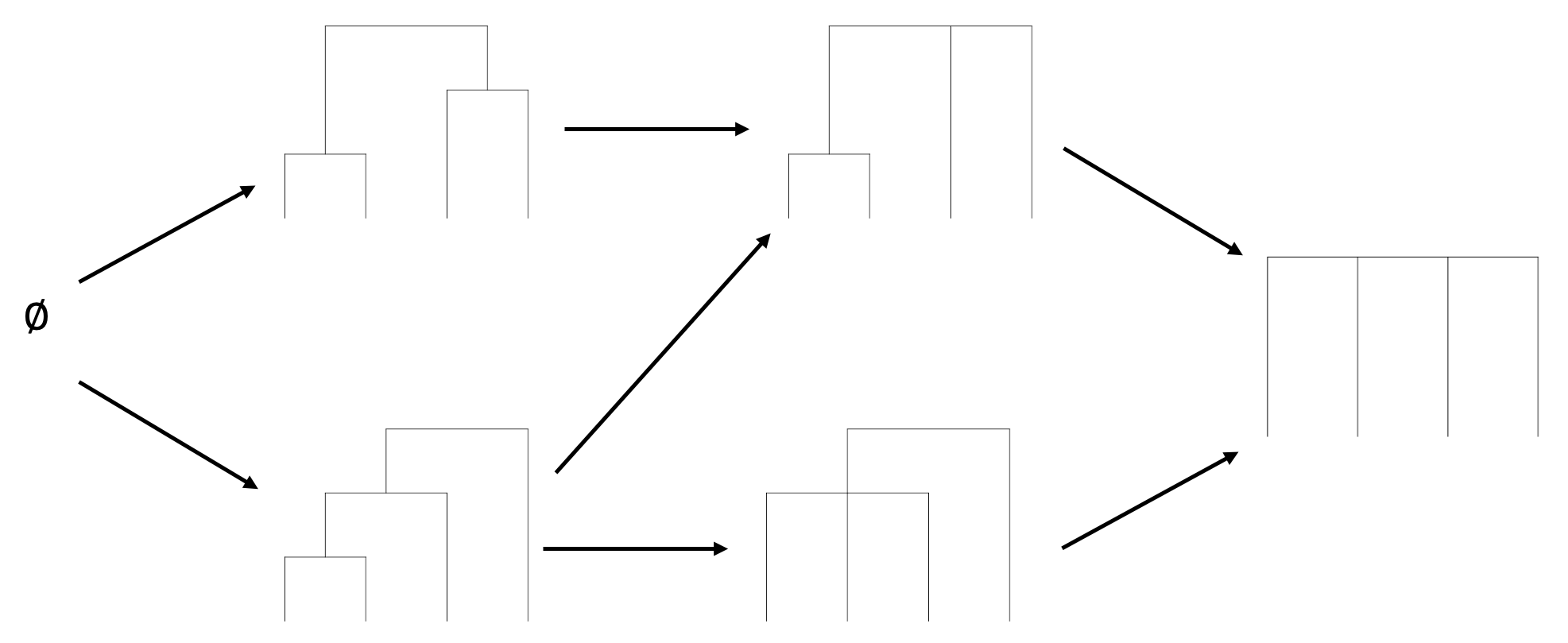}
    \caption{\textbf{The Hasse diagram for $\mathcal{MT}_4 \cup \emptyset$.} Each column contains trees with the same number of internal nodes, from the largest number of internal nodes $4$ in column 2, to the smallest number $1$ in the last column to the right.}
    \label{fig:lattice-4}
\end{figure}

\begin{figure}[H]
    \centering
    \includegraphics[width=0.95\linewidth]{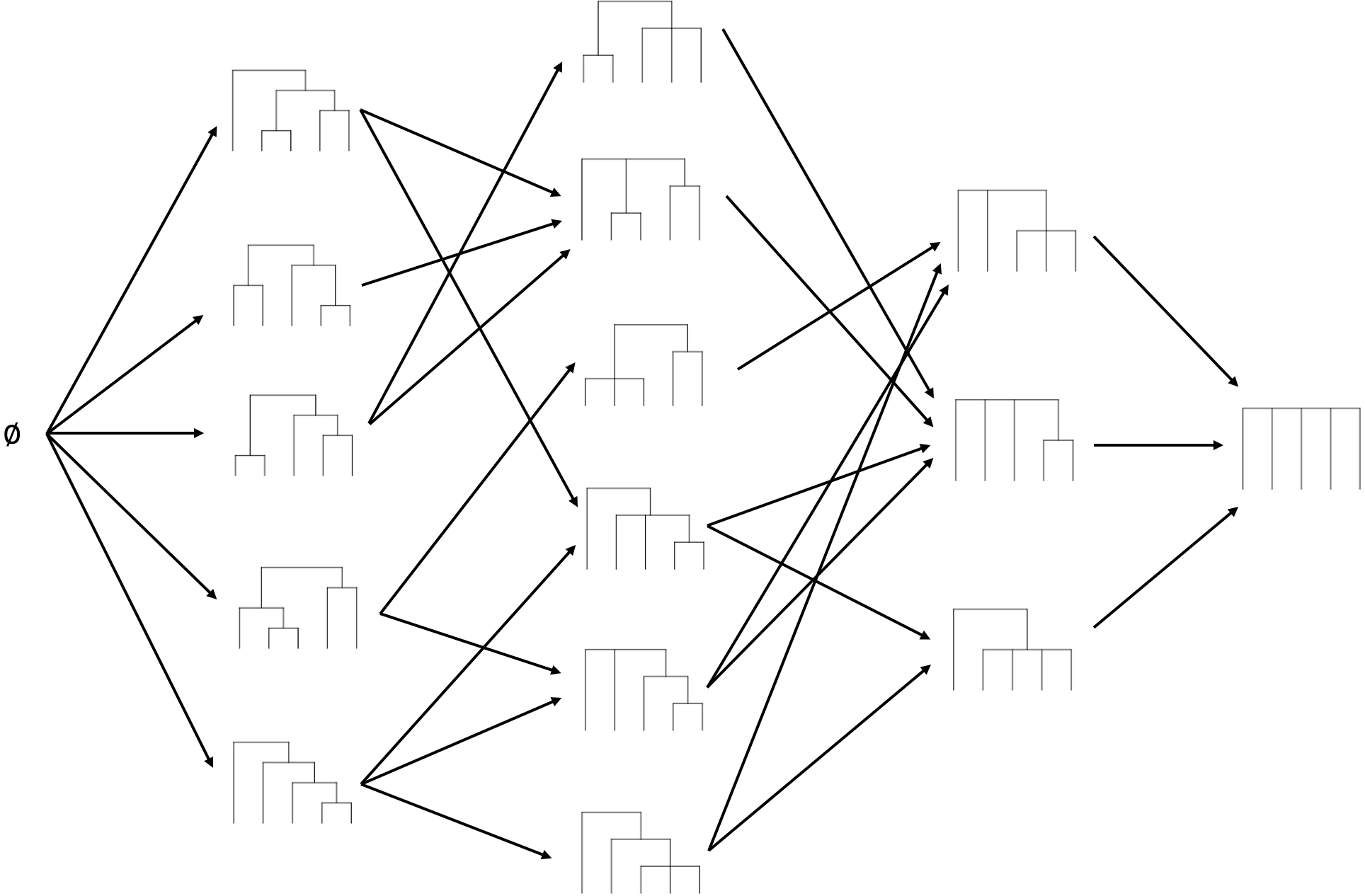}
    \caption{\textbf{The Hasse diagram for $\mathcal{MT}_5 \cup \emptyset$.} Each column contains trees with the same number of internal nodes, from the largest number of internal nodes $5$ in column 2, to the smallest number $1$ in the last column to the right.}
    \label{fig:lattice-5}
\end{figure}

\begin{figure}[H]
    \centering
    \includegraphics[width=0.95\linewidth]{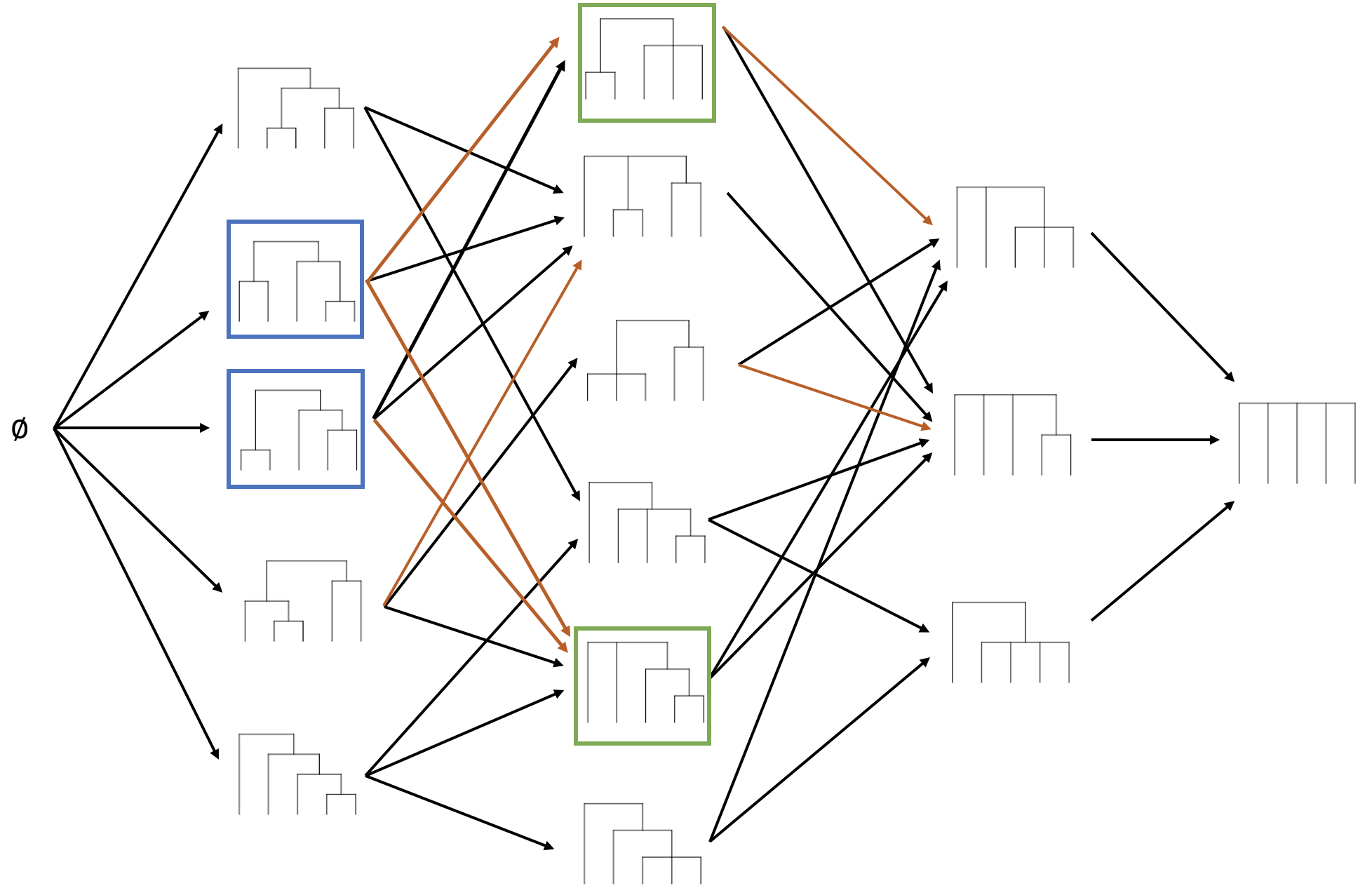}
    \caption{\textbf{The Hasse diagram for $\mathcal{MT}_5$ with partial ordering defined by collapsing any internal edge.} The black arrows are the coverings valid under the collapse edge operation, while the brown arrows denote the additional valid coverings under the collapse any internal edge operation. This partial ordering does not define a lattice because the two binary trees highlighted by the blue boxes do not have a unique least upper bound: both trees highlighted by the green boxes are valid least upper bounds which themselves are not comparable.} 
    \label{fig:fake_lattice_N_5}
\end{figure}

To prove $(\mathcal{MT}_N \cup \{\emptyset\}, \wedge, \vee)$ is an algebraic lattice, we introduce our second representation of a multifurcating tree: the multifurcating F-matrix. This representation is both essential in the proof that our defined partial ordering induces a lattice structure, and for operational use in finding least upper bounds. We will later show that the collapse edge operation corresponds to removing a column and a row of the F-matrix. 

\subsection{F-matrix for multifurcating trees}
\label{subsec:fmat}

We now consider an extension of the F-matrix representation of rooted, ranked, unlabeled binary trees \citep{Kim2020} to rooted, ranked, unlabeled multifurcating trees. As before, let $T_{N,K}$ be a ranked tree shape with $N$ tips and $K$ internal nodes with isochronous tips at time $u_{K+1}=0$. Let the $K$ coalescence times be $(u_K,..., u_1)$, where time is considered going backwards. We can encode $T_{N,K}$ by a $K\times K$ lower triangular matrix $F$ such that $F_{i,j}=0$ for $i<j$ and all $1 \leq i,j \leq k$, and $F_{i,j}$ for $1 \leq j \leq i$ defined to be the number of extant lineages in the interval $(u_{j}, u_{j+1})$ that do not furcate in the entire interval $(u_{j}, u_{i+1})$ traversing forward in time (root to tip). The F-matrix for the star tree is just the one-by-one matrix $(N)$. Figure~\ref{fig:F-matrix-4} shows all ranked tree shapes with $N=4$ tips. Figure~\ref{fig:string-larger} showed the F-matrix for a tree with 5 internal nodes and 12 tips, implying the F-matrix has dimension $5\times 5$ and $F_{5,5} = 12$. \\

\begin{figure}[H]
    \centering
    \includegraphics[width=0.975\linewidth]{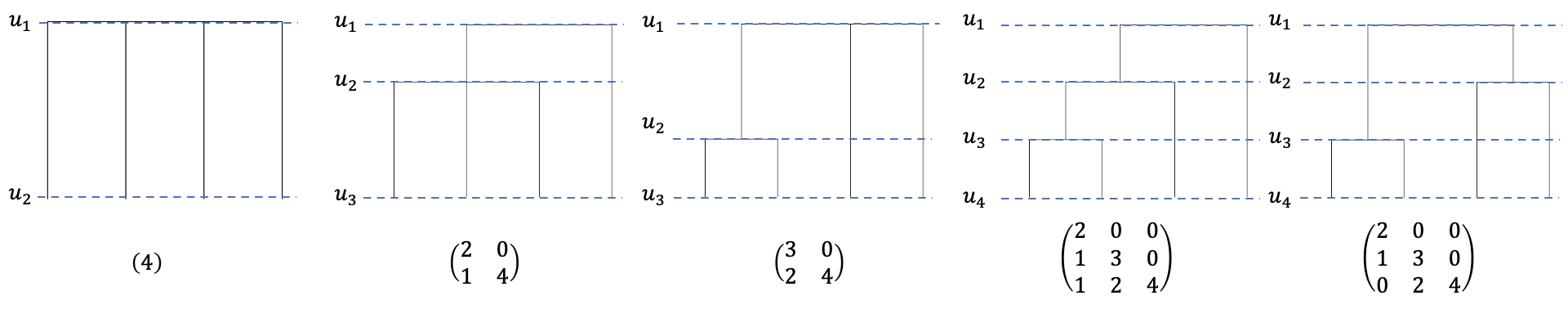}
    \caption{F-matrix representation of all trees with 4 tips.}
    \label{fig:F-matrix-4}
\end{figure}


\begin{theorem}\label{thm:fmat_multi}
The space $\mathcal{T}_{N,K}$ of ranked tree shapes with $N$ leaves and $K$ internal nodes is in bijection with the space $\mathcal{F}_{N,K}$ of $K\times K$ lower triangular matrices with non-negative integer entries that obey the
following constraints:
\begin{enumerate}[label={F\arabic*.}]
    \item The diagonal elements $F_{i,i}$ are strictly increasing: $F_{1,1}< F_{2,2}<...<F_{K,K}$ with $F_{K,K}=N$. The subdiagonal elements satisfy $F_{i+1,i} = F_{i,i}-1$ for all $i=1,...,K-1$.
    \item The elements $F_{i,1}, i=3,...,K$ in the first column satisfy \[ \max\{0, F_{i-1,1}- 1\} \leq F_{i,1} \leq F_{i-1,1}\] This just implies the column elements are decreasing by at most 1 if $F_{i-1,1}$ is non-zero. 
    \item All other elements $F_{i,j}, i=4,...,K, j=2,...,i-2$ satisfy these three conditions. 
    \begin{enumerate}[label={\alph*.}]
        \item $\max\{0, F_{i,j-1} \} \leq F_{i,j}$: Row-wise non-decreasing. 
        \item $F_{i-1, j}-1 \leq F_{i,j} \leq F_{i-1, j}$: Column-wise change is either 1 or 0.
        \item $0\leq (F_{i-1, j} - F_{i,j}) - (F_{i-1, j-1}- F_{i,j-1}) \leq 1$: in any $2\times 2$ grid, sum of off-diagonal elements minus sum of on-diagonal elements is 1 or 0. 
    \end{enumerate}
\end{enumerate}
The space of such matrices is denoted $\mathcal{F}_{N,K}$. 
\end{theorem}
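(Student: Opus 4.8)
The plan is to prove the bijection by pairing the geometric definition of the $F$-matrix with the string representation $\{t,l\}$ of Proposition~\ref{prop:string-bijection}, so that the intricate matrix constraints F1--F3 can be checked against the transparent string constraints S1--S4 rather than from scratch. I will work with the map $\phi:\mathcal{T}_{N,K}\to\mathcal{F}_{N,K}$ sending a tree to its $F$-matrix, and the linchpin will be an explicit closed form for the entries of $F=\phi(T)$ in terms of $\{t,l\}$. Reading the definition of $F_{i,j}$ as the number of lineages present in interval $(u_j,u_{j+1})$ that survive unfurcated through interval $(u_i,u_{i+1})$, I would first record three identities: the diagonal $F_{j,j}=n_j$ equals the number of lineages alive in interval $j$; the leaf counts satisfy $l_i=F_{K,i}-F_{K,i-1}$ (with $F_{K,0}:=0$), since a lineage alive in interval $i$ that never furcates is exactly a pendant edge and the new such lineages at step $i$ are those pendant to node $i$; and, for $i>j$,
\[ F_{i,j}=F_{j,j}-\bigl|\{m : j<m\le i,\; t_m\le j\}\bigr|, \]
because a lineage alive in interval $j$ fails to survive to interval $i$ precisely when the node $m$ at which it furcates satisfies $j<m\le i$ and the furcating lineage of $m$ was born at or before interval $j$, i.e. $t_m\le j$.

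With these formulas the forward direction is routine case-checking. F1 follows because node $i$, being a genuine multifurcation, has $c_i=F_{i,i}-F_{i-1,i-1}+1\ge2$ children, forcing strict increase, while the subdiagonal $F_{i+1,i}=F_{i,i}-1$ records that exactly one lineage furcates at node $i+1$. Differencing the closed form down a column gives $F_{i-1,j}-F_{i,j}=[\,t_i\le j\,]\in\{0,1\}$, which yields F2 and F3b; differencing across a row gives F3a, since a later starting interval can only gain the lineages born at the intervening node; and the double difference gives the $2\times2$ identity
\[ (F_{i-1,j}-F_{i,j})-(F_{i-1,j-1}-F_{i,j-1})=[\,t_i\le j\,]-[\,t_i\le j-1\,]=[\,t_i=j\,]\in\{0,1\}, \]
which is exactly F3c.

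For the reverse direction I would construct the inverse map and certify it via Proposition~\ref{prop:string-bijection}. Given $F$ obeying F1--F3, set $t_1=0$ and $t_i=\min\{j:F_{i-1,j}>F_{i,j}\}$ for $i\ge2$, and $l_i=F_{K,i}-F_{K,i-1}$. Constraints F3b and F3c together with the subdiagonal identity (which forces $F_{i-1,i-1}-F_{i,i-1}=1$) guarantee that the $i$-th row difference is a $0/1$ step function with a single jump, so $t_i$ is well defined with $1\le t_i\le i-1$, giving S1. Constraint S2 is the telescoping $\sum_i l_i=F_{K,K}=N$, and the $l_i$ are nonnegative by the row-monotonicity F3a. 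Writing $g(j)=\bigl|\{m:t_m=j\}\bigr|$ for the number of internal children of node $j$, a short count from the column thresholds gives $l_j=c_j-g(j)$, so $c_j\ge2$ yields S3 and S4; here $c_1=F_{1,1}\ge N-K+1\ge2$ since the diagonal is a strictly increasing integer sequence ending at $N$ and $K\le N-1$. Proposition~\ref{prop:string-bijection} then produces a tree $T$, and because $T$ has the same diagonal and the same $t$-vector as $F$, the closed form above shows every entry of $\phi(T)$ agrees with $F$, so $\phi(T)=F$; injectivity is immediate, since $\{t,l\}$ is read off from $F$ and determines $T$.

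I expect the main obstacle to be the surjectivity step: showing that the purely local constraint F3c globally assembles into a consistent parent assignment, i.e. that the jump locations of successive row differences define a legitimate vector $t$ (monotone birth structure, $t_i\le i-1$, correct multiplicities) and that the reconstructed leaf counts are nonnegative with total $N$. The closed-form identity for $F_{i,j}$ is what converts this global consistency question into the already-proved string conditions S1--S4, and getting that identity and the $l_i=F_{K,i}-F_{K,i-1}$ bookkeeping exactly right---including the first-column and boundary conventions where F2 replaces F3---is the delicate part.
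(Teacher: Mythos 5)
Your route is genuinely different from the paper's. The paper introduces an intermediate matrix $D$, where $D_{i,j}$ counts the direct descendants of node $j$ that have not yet furcated at time $u_i$, establishes $\mathcal{T}_{N,K}\leftrightarrow\mathcal{D}_{N,K}$ by adapting the binary-case argument it cites, and then passes to $F$ via row-wise partial sums $F_{i,j}=\sum_{n=1}^{j}D_{i,n}$, checking the $D$-conditions against F1--F3 in both directions. You instead reuse Proposition~\ref{prop:string-bijection}: the closed form $F_{i,j}=F_{j,j}-\bigl|\{m: j<m\le i,\ t_m\le j\}\bigr|$ turns every constraint into a statement about the indicators $\mathds{1}(t_i\le j)$ (column differences) and $\mathds{1}(t_i=j)$ (condition F3c), and your inverse map is the threshold rule $t_i=\min\{j:F_{i-1,j}>F_{i,j}\}$ together with $l_i=F_{K,i}-F_{K,i-1}$. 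This buys transparency---the combinatorial meaning of F2, F3b, F3c becomes a one-line identity, and surjectivity reduces to the already-proved string conditions S1--S4 rather than to a second intermediate object---while the paper's $D$-matrix route stays closer to the binary proof it generalizes and makes the $D\leftrightarrow F$ step a pure cumulative-sum/difference calculation. Your forward checks, the single-jump argument for well-definedness of $t_i$, the identity $l_j=c_j-g(j)$, and the recovery $\phi(T)=F$ from matching diagonal and $t$-vector are all correct.

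There is, however, one genuine error, and it sits exactly at the point your proof (to its credit) tries to address and the paper's proof silently skips. You claim $c_1=F_{1,1}\ge N-K+1\ge 2$ ``since the diagonal is a strictly increasing integer sequence ending at $N$'': the inequality runs the wrong way---strict increase up to $F_{K,K}=N$ gives the upper bound $F_{1,1}\le N-K+1$, not a lower bound. In fact $F_{1,1}\ge 2$ cannot be derived from F1--F3 as stated: for $K=2$, $N=3$, the lower triangular matrix with diagonal $(1,3)$ and subdiagonal entry $0$ satisfies F1 (and F2, F3 vacuously), yet $\mathcal{T}_{3,2}$ contains only one tree, whose F-matrix has diagonal $(2,3)$ and subdiagonal entry $1$; applying your reconstruction to the offending matrix gives $t_2=1$, $l_1=0$, $g(1)=1$, which violates S4, so no tree maps to it and the claimed bijection fails on that matrix. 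The honest fix is to add $F_{1,1}\ge 2$ to F1 (it holds automatically for every matrix in the image of $\phi$, so this costs nothing in the forward direction). Note that the paper has the same hole: its reverse-direction verification $D_{i,i}=F_{i,i}-F_{i-1,i-1}+1>1$ only applies for $i\ge 2$, and nothing in its argument forces $D_{1,1}=F_{1,1}\ge 2$. With that constraint added, your proof is complete.
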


The proof of Theorem~\ref{thm:fmat_multi} can be found in Appendix~\ref{subsec:lattice-proofs}. There is a straightforward connection between the F-matrix representation and the partial ordering on $\mathcal{MT}_N$: collapsing the edge $(e,e+1)$ corresponds to deleting the $e$th row and $e$th column of the F-matrix. An example using the tree from Figure~\ref{fig:edge_collapse_def_example} is shown in Figure~\ref{fig:Fmat_collapse_edge}. This connection will allow us to prove that the partial ordering on $\mathcal{MT}_N$ induces a lattice. \\

\begin{figure}[h]
    \centering
    \includegraphics[width=0.85\linewidth]{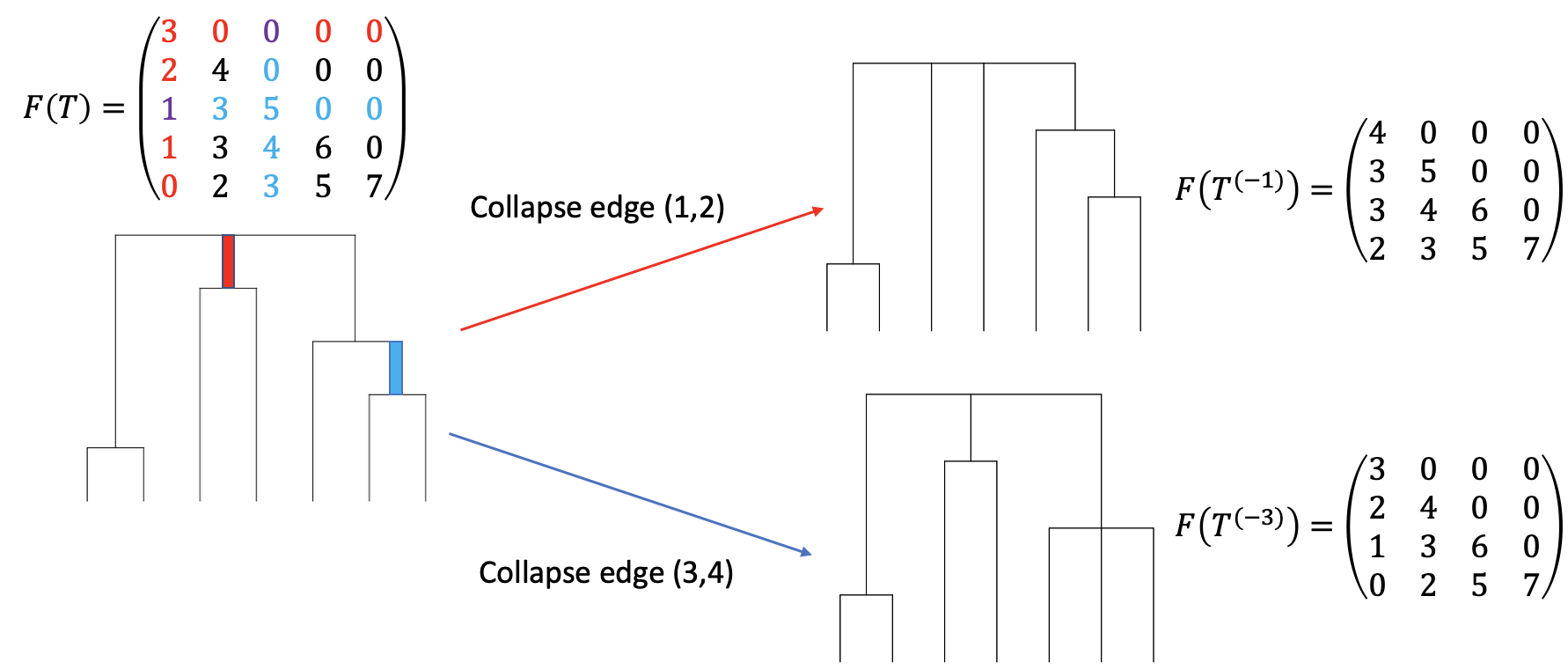}
    \caption{An example of the duality of the collapse edge operation and F-matrix row/column deletion using the tree from Figure~\ref{fig:edge_collapse_def_example}. We show the modified tree topologies and their F-matrices obtained by deleting the corresponding row and column from the original F-matrix.}
    \label{fig:Fmat_collapse_edge}
\end{figure}


First, we establish how to check the existence of the edge $(e,e+1), e\geq 2$ directly from the F-matrix. Having the edge $(e,e+1)$ means one descendant of node $e$ branched at the next time point. Since only one branching event can occur at a time, it implies none of the pre-existing branches furcated. Therefore, the F-matrix entries in the previous columns must remain constant: the edge $(e,e+1)$ existing in the tree for $e\geq 2$ is equivalent to 
\begin{equation}\label{eq:f-edge-condition}
    F_{e,j} = F_{e+1,j} \; \forall \; j=1,..., e-1.
\end{equation}
We prove the duality between row and column deletion and the collapse operation by first proving a lemma stating that deletion of such row and column still yields a valid F-matrix. The proof of Lemma~\ref{lemma:fmat_remove} is in Appendix~\ref{subsec:lattice-proofs}.

\begin{lemma} \label{lemma:fmat_remove}
Let $F \in \mathcal{F}_{N,K}$ and $(e,e+1)$ be an edge in the tree corresponding to $F$. That is, $e=1$, or $e\geq 2$ satisfies Equation~\ref{eq:f-edge-condition}. Then $F^{(-e,-e)}$ is a valid F-matrix, where the superscript denotes removing row and column $e$ from $F$. 
\end{lemma}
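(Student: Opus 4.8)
The plan is to verify directly that the reduced matrix $G := F^{(-e,-e)}$ satisfies each of the defining constraints F1--F3. Write $\sigma(x)=x$ for $x<e$ and $\sigma(x)=x+1$ for $x\geq e$, so that $G_{a,b}=F_{\sigma(a),\sigma(b)}$ for $1\leq a,b\leq K-1$. Because every constraint in F1--F3 is \emph{local}---it relates an entry only to its left neighbor, its upper neighbor, and the three entries of the enclosing $2\times 2$ block---any entry of $G$ whose relevant neighbors map under $\sigma$ to genuine neighbors in $F$ inherits its constraint immediately from $F$. Thus the only work is at the ``seam'' created by deleting row and column $e$: the pairs of $G$-rows (resp. $G$-columns) indexed $e-1$ and $e$, which correspond to the non-adjacent $F$-rows (resp. $F$-columns) $e-1$ and $e+1$. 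I would organize the proof as (i) the diagonal and subdiagonal of F1, (ii) the first column F2, and (iii) the interior conditions (a)--(c) of F3, in each case disposing of the generic entries by inheritance and treating the seam separately.

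For F1, the diagonal of $G$ is the diagonal of $F$ with one entry deleted, hence still strictly increasing with $G_{K-1,K-1}=F_{K,K}=N$. For the subdiagonal, the only non-inherited relation is at the seam, where I must show $G_{e,e-1}=F_{e+1,e-1}$ equals $G_{e-1,e-1}-1=F_{e-1,e-1}-1$; this is exactly where the hypothesis enters, since the edge condition Equation~\ref{eq:f-edge-condition} with $j=e-1$ gives $F_{e+1,e-1}=F_{e,e-1}$, and F1 for $F$ gives $F_{e,e-1}=F_{e-1,e-1}-1$. For F2 with $e\geq 2$, the first column of $G$ is the first column of $F$ with row $e$ deleted, monotone by inheritance except across the merged pair $(e-1,e+1)$, where $F_{e+1,1}=F_{e,1}$ (Equation~\ref{eq:f-edge-condition}, $j=1$) again reduces the step to F2 for $F$; for $e=1$ the deletion is at the boundary, the new first column is the old second column, which obeys the same decrease-by-at-most-one bound by F3(b) together with non-negativity.

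For F3, conditions (a) and (b) are handled analogously: row monotonicity (a) across the deleted column follows by transitivity from $F_{\cdot,e-1}\leq F_{\cdot,e}\leq F_{\cdot,e+1}$, and column condition (b) is inherited except at the row seam, where---crucially---the relevant columns satisfy $j\leq e-2$ (because F3 requires $b\leq a-2$ while the seam sits at $a=e$), so the edge condition forces $F_{e+1,j}=F_{e,j}$ and collapses the two-row change into a single $F$-step. The main subtlety is the $2\times 2$ grid condition (c). At the row seam it is again handled by the collapse $F_{e+1,j}=F_{e,j}$ for $j\leq e-2$. At the column seam, however, no edge condition is available; here I would instead observe that each column decrement $F_{r,j}-F_{r+1,j}$ lies in $\{0,1\}$ by (b) and is non-decreasing in $j$ by (c) applied to consecutive columns of $F$, so the decrement difference across the gap columns $e-1$ and $e+1$ automatically lies in $\{0,1\}$. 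Finally, a row seam and a column seam can never occur in the same F3 block, since that would require $a=e=b$ against $b\leq a-2$. I expect the row-seam instances of (b) and (c) to be the crux of the argument---this is precisely the step that fails for a general internal edge and that forces the edge-existence hypothesis Equation~\ref{eq:f-edge-condition}---while the column-seam case of (c) is the one place where boundedness of the decrements, rather than the hypothesis, does the work.
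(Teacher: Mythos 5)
Your proof is correct and follows the same overall strategy as the paper's: verify F1--F3 directly for $F^{(-e,-e)}$, observing that every constraint away from the deleted row and column is inherited, and invoking Equation~\ref{eq:f-edge-condition} exactly at the row seam (including the $e=1$ boundary case, which you and the paper both dispatch via F3b plus non-negativity of the entries). The one place where you genuinely diverge---and improve on the paper---is the column-seam instance of F3c. The paper's case decomposition for F3c assigns the reduced-matrix column pair $(e-1,e)$, which corresponds to the non-adjacent $F$-columns $e-1$ and $e+1$, to formulas that are only valid for column indices $\le e-1$ or $\ge e+1$, and its claim that the condition follows from a single application of F3c on $F$ fails there: chaining F3c across the skipped column $e$ only yields an upper bound of $2$, not $1$. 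Your extra observation is exactly what is needed to close this: each column decrement $F_{r,j}-F_{r+1,j}$ lies in $\{0,1\}$ (by F3b, and by F2 together with non-negativity for $j=1$) and is non-decreasing in $j$ (by F3c), so the decrement difference across the two non-adjacent columns automatically lies in $\{0,1\}$. Your remark that a row seam and a column seam can never occur in the same $2\times 2$ block (that would force $a=e=b$, contradicting $b\le a-2$) cleanly completes the case analysis. In short, your write-up is not merely correct; it repairs a small but genuine imprecision in the paper's own argument.
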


\begin{theorem} \label{thm:fmat_collapse}
Let $T_{N,K} \in \mathcal{T}_{N,K}$ with the edge $(e,e+1)$ for some $e \in \{1,...,K-1\}$. Let $T_{N,K-1}^{(-e)}$ be obtained from $T_{N,K}$ by collapsing edge $(e, e+1)$. Then $F(T_{N,K-1}^{(-e)})  = F(T_{N,K})^{(-e, -e)}$, where $F(T)$ returns the F-matrix corresponding to $T$. 
\end{theorem}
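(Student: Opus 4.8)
The plan is to prove the identity entrywise, directly from the lineage-counting definition of the F-matrix, and to read off the equality from a region-by-region comparison of the two trees. Write $F = F(T_{N,K})$ and let $G = F(T_{N,K-1}^{(-e)})$ denote the $(K-1)\times(K-1)$ F-matrix of the collapsed tree. Deleting row and column $e$ is the same as reindexing through the map $\sigma$ with $\sigma(m)=m$ for $m<e$ and $\sigma(m)=m+1$ for $m\ge e$, so that $\bigl(F^{(-e,-e)}\bigr)_{i,j}=F_{\sigma(i),\sigma(j)}$. The goal is therefore to show $G_{i,j}=F_{\sigma(i),\sigma(j)}$ for every $1\le j\le i\le K-1$. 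Since Lemma~\ref{lemma:fmat_remove} already guarantees that $F^{(-e,-e)}$ is a valid F-matrix, no admissibility checking is needed; only the numerical equality of entries remains, and then Theorem~\ref{thm:fmat_multi} identifies the tree.

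First I would fix the correspondence between the intervals of the two trees in terms of their \emph{content} (the set of extant lineages present in each interval), which is what the F-matrix actually records. Collapsing $(e,e+1)$ merges the events creating nodes $e$ and $e+1$ into a single event of rank $e$ and deletes the original time slice $I^F_e=(u_e,u_{e+1})$, while the events of rank $1,\dots,e-1$ are untouched and those of rank $e+2,\dots,K$ are simply relabeled $e+1,\dots,K-1$. Consequently the collapsed interval $I^G_m$ carries the same lineages as $I^F_m$ for $m\le e-1$, and the same lineages as $I^F_{m+1}$ for $m\ge e$; in particular, a short count shows the merged node has $c_e+c_{e+1}-1$ children (where $c_e,c_{e+1}$ are the out-degrees of the two merged nodes), so the lineage total just after the merge equals $F_{e+1,e+1}$, matching $G_{e,e}$.

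Then I would split into three regions according to how $i,j$ sit relative to $e$, recalling that $F_{i,j}$ counts the lineages born in interval $I^F_j$ that persist unfurcated across the span $I^F_j,\dots,I^F_i$, and similarly for $G$. (i) If $i,j<e$ the entire span lies before the merge, the two trees are structurally identical there, and $G_{i,j}=F_{i,j}=F_{\sigma(i),\sigma(j)}$. (ii) If $e\le j\le i$ the entire span lies after the merge, the content correspondence $I^G_m\leftrightarrow I^F_{m+1}$ gives $G_{i,j}=F_{i+1,j+1}=F_{\sigma(i),\sigma(j)}$. (iii) The crossing region $j<e\le i$ is the heart of the argument: the collapsed span $I^G_j,\dots,I^G_i$ corresponds to $I^F_j,\dots,I^F_{e-1},I^F_{e+1},\dots,I^F_{i+1}$, which \emph{omits} the deleted slice $I^F_e$, whereas the target count $F_{i+1,j}$ requires survival across that slice too. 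Equality here is exactly where the hypothesis enters: by Equation~\ref{eq:f-edge-condition} the extra furcation removed by the collapse (the event at $u_{e+1}$) splits a lineage born at node $e$, never a lineage originating before $u_e$, so an old lineage from column $j<e$ survives the merged event if and only if it survives both original events, giving $G_{i,j}=F_{i+1,j}=F_{\sigma(i),\sigma(j)}$.

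The main obstacle I anticipate is precisely the bookkeeping in region (iii): one must argue carefully that ``not furcating across the single merged event'' in the collapsed tree is equivalent to ``not furcating across the two events at $u_e$ and $u_{e+1}$'' in the original, and the only new event that could break this equivalence is the one at $u_{e+1}$, which the edge condition rules out for every column $j<e$. I would also handle $e=1$ separately: there Equation~\ref{eq:f-edge-condition} is vacuous (the edge $(1,2)$ always exists), region (iii) is empty, and the identity collapses to the uniform shift $G_{i,j}=F_{i+1,j+1}$ of region (ii). Assembling the three regions yields $G=F^{(-e,-e)}$ entrywise, which is the claim.
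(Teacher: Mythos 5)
Your proposal is correct and follows essentially the same route as the paper's proof: rely on Lemma~\ref{lemma:fmat_remove} for validity of $F^{(-e,-e)}$, then verify entrywise equality by splitting the matrix into the three regions (both indices before $e$, both after, and the crossing block) and tracking which lineages each entry counts. Your treatment is somewhat more explicit than the paper's---in particular, in the crossing region you spell out that Equation~\ref{eq:f-edge-condition} guarantees the event at $u_{e+1}$ only splits a lineage born at node $e$, a point the paper compresses into the phrase ``the count of furcating branches created before $u_e$ does not change''---but the decomposition and key ideas are identical.
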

\begin{proof}
Lemma~\ref{lemma:fmat_remove} shows that deleting the $e$th row and column of the $F(T_{N,K})$ results in a valid F-matrix, so we must show the F-matrix of the tree collapsing edge $(e, e+1)$ is indeed $F(T_{N,K})^{(-e,-e)}$. Collapsing edge $(e, e+1)$ keeps all events in $(u_1, u_{e-1})$ the same and so the upper left submatrix $[1:(e-1),1:(e-1)]$ does not change in $F(T_{N,K-1}^{(-e)})$. The values of the lower left submatrix $[(e+1):K,1:(e-1)]$ would just be shifted to the $[e:(K-1),1:(e-1)]$ lower left submatrix in $F(T_{N,K-1}^{(-e)})$ because the count of furcating branches created before $u_{e}$ does not change. \\

\noindent Similarly, the values of the right lower submatrix $[(e+1):K,(e+1):K]$ refers to branches extant after $u_{e}$ and so their values would be shifted to $[e:(K-1),e:(K-1)]$ in $F(T_{N,K-1}^{(-e)})$. For example in Figure~\ref{fig:Fmat_collapse_edge}, when we collapse edge $(3,4)$, the entries in the bottom right $2\times 2$ remain the same and do not change. This is precisely $F^{(-e, -e)}$, which means $F(T_{N,K-1}^{(-e)}) = F^{(-e, -e)}$.
\end{proof}

To find the least upper bound $T_{X} \vee T_{Y}$, we first note that since the collapse edge operation can be easily written in terms of a row and column deletion of the F-matrix, so the partially ordered set on $\mathcal{MT}_N$ can be expressed entirely in terms of the F-matrices. Second, if $T_{X} \vee T_{Y}$ exists, then by Lemma~\ref{lemma:fmat_remove}, $F(T_{X}\vee T_{Y})$ is a submatrix of both $F(T_{X})$ and $F(T_{Y})$. Moreover, $F(T_{X}\vee T_{Y})$ is the largest shared submatrix of $F(T_{X})$ and $F(T_{Y})$. We exploit these two facts in Algorithm~\ref{alg:tree_binary_lub} together with the following two corollaries to find the unique least upper bound of two binary trees. Algorithm~\ref{alg:tree_lub} extends Algorithm~\ref{alg:tree_binary_lub} to find the unique least upper bound for any pair of multifurcating trees.

\begin{corollary}\label{cor:F_row_diff}
If $F_{i,j} = F_{i+1, j}+1$ for some $j\in \{1,...,i-1\}$, then $F_{i,k}= F_{i+1, k}+1$ for all $j\leq k \leq i-1$.
\end{corollary}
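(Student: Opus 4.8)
The plan is to reduce the statement to an elementary monotonicity property of a single adjacent pair of rows. Fixing the row index $i$, I would introduce the column differences $d_k := F_{i,k} - F_{i+1,k}$ for $1 \le k \le i-1$, so that the hypothesis reads $d_j = 1$ and the desired conclusion reads $d_k = 1$ for all $j \le k \le i-1$. The whole corollary then follows from two facts about the finite sequence $(d_k)$: first, that each $d_k$ lies in $\{0,1\}$, and second, that $(d_k)_k$ is non-decreasing in $k$. Granting these, the argument is immediate: since $d_j = 1$ and $d_k \ge d_j$ for every $k \ge j$, while $d_k \le 1$, we obtain $d_k = 1$ throughout the range, i.e. $F_{i,k} = F_{i+1,k}+1$.

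To establish $d_k \in \{0,1\}$, I would invoke the column-wise constraints of Theorem~\ref{thm:fmat_multi}: for $k=1$ this is constraint F2, which forces $0 \le F_{i,1} - F_{i+1,1} \le 1$, and for $2 \le k \le i-1$ it is part (b) of F3 applied to the entry $F_{i+1,k}$, giving $F_{i,k} - 1 \le F_{i+1,k} \le F_{i,k}$. For the monotonicity I would apply the $2\times 2$-grid condition, part (c) of F3, to the two rows $i, i+1$ and the two columns $k-1, k$; reading that condition with its lower row equal to $i+1$ yields $0 \le d_k - d_{k-1} \le 1$, and hence $d_k \ge d_{k-1}$. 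Note that this inequality is only needed for indices $k > j$, so the lower end of the range never requires the grid condition.

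The one point demanding care is the index bookkeeping at the boundaries of the range, and this is where I expect the only friction. The top end $k = i-1$ places $F_{i,i-1}$ on the subdiagonal, so I must verify that the relevant neighbor $F_{i+1,i-1}$ and the grid spanning columns $i-2, i-1$ still fall under parts (b) and (c) of F3; they do, since those conditions require lower-row index $i+1 \ge 4$ and column index at most $(i+1)-2 = i-1$. I would also dispatch the small cases separately: when $i = 2$ the range $\{1,\dots,i-1\}$ is the single index $j = 1$ and the conclusion coincides with the hypothesis, so no grid condition is invoked, whereas for $i \ge 3$ the conditions of F3 are available precisely on the columns required. Assembling these observations delivers the corollary with no residual computation.
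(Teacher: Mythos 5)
Your proof is correct, but it takes a genuinely different route from the paper's. The paper proves the corollary semantically, in one line, from the lineage-counting definition of the entries: $F_{i,j}=F_{i+1,j}+1$ means some lineage extant in $(u_j,u_{j+1})$ survives unfurcated until it furcates in $(u_{i+1},u_{i+2})$, and that same lineage is counted (and then lost) in every column $k$ with $j\le k\le i-1$, giving the conclusion immediately. You instead work purely axiomatically with the characterization of $\mathcal{F}_{N,K}$ in Theorem~\ref{thm:fmat_multi}: setting $d_k=F_{i,k}-F_{i+1,k}$, you get $d_k\in\{0,1\}$ from F2 and F3b, monotonicity $d_{k-1}\le d_k$ from F3c applied with lower row $i+1$, and then $d_j=1$ forces $d_k=1$ for all $k\ge j$ in range; your index bookkeeping (F3 applies to row $i+1\ge 4$ and columns $2\le k\le (i+1)-2=i-1$, with $i=2$ trivial) checks out. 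The trade-off: the paper's argument is shorter and more transparent, but it presupposes the tree interpretation of the matrix; yours is entirely internal to the constraint system F1--F3, so it establishes the property for any matrix satisfying those constraints without passing through the bijection --- which is arguably the form in which the corollary is actually used later (e.g., in Corollary~\ref{cor:F_delete_illegal_row} and in the correctness argument for Algorithm~\ref{alg:tree_binary_lub}, where one manipulates matrices directly).
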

\begin{proof}
This results follows directly from the definition of $F_{i,j}$: $F_{i,j} = F_{i+1, j}+1$ implies one lineage that existed in the interval $(u_j,u_{j+1})$ furcated in the interval $(u_{i+1}, u_{i+2})$, and so $F_{i,k}= F_{i+1, k}+1$ also holds for all $j\leq k \leq i-1$. 
\end{proof}

As an example, see the F-matrix $F(T)$ in Figure~\ref{fig:Fmat_collapse_edge}. In rows 4 and 5, we have $F_{4,1} - F_{5,1}=1$, which implies $F_{4,i}- F_{5,1}=1$ must hold for $i=2,...,4$, which we see is indeed true in the F-matrix. 

\begin{corollary}\label{cor:F_delete_illegal_row}
If $e$ does not satisfy Equation~\ref{eq:f-edge-condition}, then $\tilde{F}_{e-1, e-1} = \tilde{F}_{e-1, e}+2$ where $\tilde{F}= F^{(-e, -e)}$. That is, deleting a column and row in the F-matrix that does not correspond to an edge $(e, e+1)$ will yield an invalid F-matrix, specifically one column will have a consecutive element difference greater than one. 
\end{corollary}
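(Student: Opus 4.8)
The plan is to trace what the failure of Equation~\ref{eq:f-edge-condition} forces on a single column of $F$, and then read off the corresponding consecutive entries of $\tilde F = F^{(-e,-e)}$. First I would unpack the hypothesis: since $e$ does not satisfy Equation~\ref{eq:f-edge-condition}, there is some index $j \in \{1,\ldots,e-1\}$ with $F_{e,j} \neq F_{e+1,j}$. The column-wise constraint F3b forces $0 \le F_{e,j}-F_{e+1,j}\le 1$, so the only possibility is $F_{e,j}=F_{e+1,j}+1$. Applying Corollary~\ref{cor:F_row_diff} with $i=e$ then propagates this equality rightward along the row, giving $F_{e,k}=F_{e+1,k}+1$ for every $k$ with $j\le k\le e-1$; in particular, at $k=e-1$ we obtain $F_{e,e-1}=F_{e+1,e-1}+1$.

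Next I would bring in the subdiagonal rule F1, which supplies $F_{e,e-1}=F_{e-1,e-1}-1$. Chaining this with the previous relation yields
\[
F_{e+1,e-1}=F_{e,e-1}-1=F_{e-1,e-1}-2,
\]
so in column $e-1$ of the original matrix the diagonal entry and the entry two rows below it differ by exactly $2$, with the intervening subdiagonal entry $F_{e,e-1}$ lying strictly between them.

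The final step is the bookkeeping of the deletion. Removing row $e$ and column $e$ leaves every column with index $<e$ positionally untouched, so column $e-1$ of $\tilde F$ consists of the same entries as column $e-1$ of $F$ with the single entry $F_{e,e-1}$ excised and the old row $e+1$ sliding up into new row $e$. Hence $\tilde F_{e-1,e-1}=F_{e-1,e-1}$ while the entry directly beneath it is $\tilde F_{e,e-1}=F_{e+1,e-1}=F_{e-1,e-1}-2=\tilde F_{e-1,e-1}-2$. This is precisely a consecutive (diagonal-to-subdiagonal) difference of $2$ in column $e-1$, which violates the requirement in F1 that each subdiagonal entry sit exactly one below the diagonal; therefore $\tilde F \notin \mathcal{F}_{N,K-1}$, establishing the claim.

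I expect the main obstacle to be purely the index-shift bookkeeping under deletion --- confirming which original entries occupy the new diagonal and subdiagonal positions of column $e-1$ after row $e$ is removed --- since everything else is a short chain of the already-established facts F1, F3b, and Corollary~\ref{cor:F_row_diff}. A secondary point worth stating explicitly is that F3b is exactly what rules out a jump larger than $1$ in the original column, so the bare non-equality in the hypothesis sharpens to the clean relation $F_{e,j}=F_{e+1,j}+1$ that Corollary~\ref{cor:F_row_diff} requires as input.
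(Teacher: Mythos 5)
Your proposal is correct and follows essentially the same route as the paper's proof: sharpen the non-equality to $F_{e,j}=F_{e+1,j}+1$, propagate it to column $e-1$ via Corollary~\ref{cor:F_row_diff}, invoke F1 at the subdiagonal, and read off the difference of $2$ after deletion. Your two explicit additions---using F3b to justify that the difference must be exactly $1$, and the index bookkeeping identifying the violating entry as $\tilde F_{e,e-1}$ (which the paper's statement writes, apparently with transposed indices, as $\tilde F_{e-1,e}$)---are details the paper leaves implicit, and both are handled correctly.
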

\begin{proof}
Let index $j \in \{1,..., e-1\}$ be the one such that $F_{e,j} \neq F_{e+1, j}$. Then by Corollary~\ref{cor:F_row_diff}, $F_{e, e-1} = F_{e+1, e-1}+1$. By condition F1 of the definition of the F-matrix, we know $F_{e-1, e-1} = F_{e, e-1} +1$ and hence $F_{e-1, e-1} = F_{e+1, e-1}+2$. Taking $\tilde{F}= F^{(-e, -e)}$ gives the result $\tilde{F}_{e-1, e-1} = \tilde{F}_{e-1, e}+2$. 
\end{proof}

\begin{algorithm}[H]
\caption{Algorithm to find the least upper bound of two binary ranked tree shapes}
\label{alg:tree_binary_lub}
\vspace{0.1in}
\KwData{$T_X, T_Y$: two binary ranked tree shapes with $N$ tips.} 
\vspace{0.1in}

\begin{enumerate}
    \item Find the set of column indices for which $F(T_X)$ and $F(T_Y)$ differ: \[ I = \{i: F(T_X)_{\cdot i} \neq F(T_Y)_{\cdot i} \}. \] and define the largest shared submatrix of $F(T_X), F(T_Y)$ as \[ S_F(T_X, T_Y) = F(T_X)^{(-I, -I)} = F(T_Y)^{(-I, -I)}.\]
    \item Find all columns $j$ that violate conditions F1 and F3b of the F-matrix definition:  
    \begin{itemize}
        \item Consecutive elements in the $j$th column (starting at entry $j,j$) differ by 2 or more. 
        \item $S_F(T_X, T_Y)_{j,j} -1 \neq S_F(T_X, T_Y)_{j+1,j}$: the subdiagonal element must be one less than the diagonal element.
    \end{itemize}
    Call this set of column indices $D$ and update $S_F(T_X, T_Y)\gets S_F(T_X, T_Y)^{(-D, -D)}$. 
    \item Repeat Step 2 until no such columns remain. 
    \item Return $S_F(T_X, T_Y)$: this is the LUB of $T_X, T_Y$. 
\end{enumerate}
\end{algorithm}
\vspace{0.1in}

Note this algorithm will always terminate because the matrix $S_F(T_X, T_Y)$ will always have the lower right $2\times 2$ matrix of $\begin{pmatrix} N-1 & 0 \\ N-2 & N \end{pmatrix}$ that does not violate the conditions. An example of this algorithm is given in Appendix~\ref{subsec:lub-algs}. 

\begin{proposition}\label{prop:binary-lub}
Algorithm~\ref{alg:tree_binary_lub} returns the unique least upper bound of any two binary trees. 
\end{proposition}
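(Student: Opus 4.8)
The plan is to recast the entire statement in the language of F-matrices. By Theorem~\ref{thm:fmat_collapse}, each collapse-edge operation is exactly the deletion of a matching row and column, so $T_X \le Z$ holds precisely when $F(Z)$ is a principal submatrix of $F(T_X)$ obtained by a legal sequence of deletions. Because $T_X$ and $T_Y$ are binary, condition F1 forces both $F(T_X)$ and $F(T_Y)$ to carry the identical diagonal $(2,3,\ldots,N)$ and subdiagonal $(1,2,\ldots,N-1)$; any disagreement between the two matrices can therefore occur only strictly below the subdiagonal. The first step is a characterization lemma: a tree $Z$ is a common upper bound of $\{T_X,T_Y\}$ if and only if $F(Z)$ is a \emph{valid} F-matrix equal to the principal submatrix of both $F(T_X)$ and $F(T_Y)$ on a \emph{single} shared index set $C$ on which the two matrices agree. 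The reason a common index set suffices, rather than two different deletion sets, is exactly the rigid diagonal: collapsing from $T_X$ deletes a subset of the distinct diagonal values $\{2,\ldots,N\}$, and for the two resulting diagonals to coincide the same values, hence the same indices, must be removed from each. Reachability of $Z$ from each of $T_X,T_Y$ then follows by deleting the indices outside $C$ one at a time, always collapsing an edge that currently satisfies Equation~\ref{eq:f-edge-condition}, with Lemma~\ref{lemma:fmat_remove} keeping every intermediate matrix valid.

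With this characterization, producing the least upper bound is the same as producing the \emph{largest} index set $C$ for which the shared submatrix is a valid F-matrix, since a larger retained set means more internal nodes, hence a smaller element of the poset. I would then verify that the algorithm computes precisely this set. Step~1 discards every column in which $F(T_X)$ and $F(T_Y)$ differ, giving the shared submatrix $S_F$; because a discarded column differs in its entirety, the retained rows and columns agree, so $S_F$ is genuinely shared. Steps~2--3 then repeatedly delete any column whose diagonal or subdiagonal violates F1 or whose $2\times 2$ blocks violate F3b. Corollary~\ref{cor:F_delete_illegal_row} is the engine: deleting a row and column that does not correspond to a legal edge opens a diagonal gap of size two, so the violations flagged in Step~2 are exactly the columns that cannot sit inside any valid F-matrix built on the retained indices. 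Termination is the observation already noted in the text, namely that the bottom-right block $\left(\begin{smallmatrix} N-1 & 0 \\ N-2 & N\end{smallmatrix}\right)$ is always legal; thus the process halts at a genuine F-matrix, which by Step~1 is a shared submatrix and so, with the reachability argument above, an upper bound.

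The substance is the maximality claim: the output index set $C^{\ast}$ contains the index set of every upper bound, so that every upper bound lies above the algorithm's output. I would argue this by induction on the iterations of Steps~2--3, keeping the invariant that every index deleted so far is excluded from every admissible index set. For the base case (Step~1), note that every valid F-matrix has bottom-right entry $N$, so the index whose binary diagonal value is $N$ must always be retained; combined with agreement and with the propagation of a single column discrepancy guaranteed by Corollary~\ref{cor:F_row_diff}, this forces any retained differing column to drag one of its disagreement rows back into $C$, contradicting agreement. For the inductive step, a column flagged in Step~2 fails F1 or F3b relative to indices that must be retained; using Corollary~\ref{cor:F_delete_illegal_row} in the contrapositive, retaining it would require retaining an index already shown inadmissible. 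Since $C^{\ast}$ is therefore the unique maximal admissible index set, the corresponding tree is the unique least upper bound.

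The hard part will be making this maximality induction airtight, and in particular ruling out that a differing or a validity-violating column survives because its problematic rows happen to be deleted. The two levers I expect to resolve this are the rigid binary diagonal, which pins certain indices (above all the one with diagonal value $N$) into every admissible set, and the gap-creation Corollaries~\ref{cor:F_row_diff} and~\ref{cor:F_delete_illegal_row}, which convert any attempted rescue into a forbidden diagonal jump or $2\times 2$ violation among the retained indices. Establishing that these forced inclusions and forced gaps propagate consistently through the algorithm's cascade of deletions is exactly what secures both the maximality of $C^{\ast}$ and the uniqueness of the least upper bound.
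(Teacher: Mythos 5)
Your overall strategy is the same as the paper's: translate everything into F-matrix language via Theorem~\ref{thm:fmat_collapse}, use the rigid binary diagonal to reduce common upper bounds to shared principal submatrices on a single index set, show the algorithm's output is a valid F-matrix via Lemma~\ref{lemma:fmat_remove} and Corollary~\ref{cor:F_delete_illegal_row}, and then argue maximality of the retained index set. Up to reorganization (your induction invariant versus the paper's proof by contradiction), these are the same proof skeleton, and your characterization of upper bounds via a single shared index set is a nice way to make explicit something the paper leaves implicit.

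However, there is a genuine gap, and it is exactly the one you flag yourself in your last paragraph: you never rule out that a column flagged as violating F1/F3b could be ``rescued'' if some of its offending rows were deleted instead, i.e.\ that a smarter choice of deletions could retain an index your algorithm throws away. Your inductive step asserts that a flagged column ``fails F1 or F3b relative to indices that must be retained,'' but that is precisely what needs to be proved --- the violation might a priori involve rows whose status is still undetermined, and invoking Corollary~\ref{cor:F_delete_illegal_row} ``in the contrapositive'' does not by itself exclude this. The paper closes this hole with a short monotonicity observation that is absent from your proposal: within any fixed column of an F-matrix, the entries below the diagonal are non-increasing going down (condition F3b), so deleting rows from a column can only \emph{increase} the gaps between consecutive surviving entries, never decrease them. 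Consequently, once a column exhibits a consecutive-element gap of $2$ or more among the retained rows, no further row deletions can repair it; the column is irrevocably inadmissible, and the cascade of deletions in Steps~2--3 removes only indices excluded from every admissible set. Adding this one observation is what turns your proof plan --- which correctly identifies all the needed levers --- into a complete proof; without it, the maximality induction does not close.
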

\begin{proof}
We must prove that 1) the output of Algorithm~\ref{alg:tree_binary_lub} is an F-matrix and 2) it has the maximum possible dimension out of all possible submatrices of $F(T_X)$ and $F(T_Y)$ that are F-matrices. \\

\noindent First, the final output of Algorithm~\ref{alg:tree_binary_lub}, $S_F(T_X, T_Y)$ is a submatrix of $F(T_X)$ since it is obtained by deleting a set of rows and columns from $F(T_X)$. Lemma~\ref{lemma:fmat_remove} and Corollary~\ref{cor:F_delete_illegal_row} together imply that only deleting rows and columns that correspond to an edge $(e, e+1)$ will yield a submatrix with columns in which consecutive values differences below the diagonal are less than 2. Since $S_F(T_X, T_Y)$ does not have any columns with consecutive column element differences below the diagonal greater than 2, we must have deleted only rows and columns from $F(T_X)$ that satisfied Equation~\ref{eq:f-edge-condition}. Hence the output of Algorithm~\ref{alg:tree_binary_lub} is an F-matrix. Although Algorithm 1 deletes columns that violate a certain condition in the F-matrix definition, these columns correspond to a sequence of collapse edge operations in the right order, as it is the only way that row and column deletion operations would yield a valid F-matrix.\\ 

\noindent We will show that Algorithm~\ref{alg:tree_binary_lub} returns the maximum shared submatrix by contradiction. Suppose there is an extra row and column deleted by the algorithm (column $e$ in the original F-matrix) that should not have been deleted. Then, column-row $e$ could have been deleted for two possible reasons: 1) the $e$th columns of $F(T_X)$ and $F(T_Y)$ differ, or 2) the consecutive elements of column $e$ below the diagonal differed by 2 or more. Case 1 is not possible because then column-row $e$ would not be in any shared submatrix of $F(T_X)$ and  $F(T_Y)$. In Case 2, this implies that deleting another invalid column, and therefore removing some row elements of column $e$, would fix the violations in column $e$. However, this is not possible because the elements in column $e$ are non-increasing, and the deletion of any element would only increase the consecutive element differences in column $e$. Therefore, $S_F(T_X, T_Y)$ obtained via Algorithm 1 has the maximum possible dimension of a valid submatrix that is an F-matrix and such matrix is unique.
\end{proof}

The algorithm to find the least upper bound of any two multifurcating ranked tree shapes with $N$ tips only needs an additional first step: to get the largest submatrices of $F(T_X)$ and $F(T_Y)$ such that both have the same dimension and the same diagonal values. \\

\begin{algorithm}[H]
\caption{Algorithm to find the least upper bound of two multifurcating ranked tree shapes}
\label{alg:tree_lub}
\vspace{0.1in}
\KwData{$T_X, T_Y$: two multifurcating ranked tree shapes with $N$ tips.} 
\vspace{0.1in}

\begin{enumerate}
    \item Find the set of indices for $F(T_X)$ and $F(T_Y)$ where the diagonal elements differ: 
    \begin{align*}
        I_X &= \{i: F(T_X)_{i,i} \not\subset \text{diag}(F(T_Y))\} \\
        I_Y &= \{i: F(T_Y)_{i,i} \not\subset \text{diag}(F(T_X))\}
    \end{align*}
    Set \[ F^-(T_X) = F^-(T_X)^{(-I_X, -I_X)}, \qquad F^-(T_Y) = F^-(T_Y)^{(-I_Y, -I_Y)}\] The matrices $F^-(T_X), F^-(T_Y)$ now have the same dimension and the same diagonal. 
    \item Apply Algorithm~\ref{alg:tree_binary_lub} with inputs $F^-(T_X), F^-(T_Y)$. 
\end{enumerate}
\end{algorithm} 
\vspace{0.1in}

Algorithm~\ref{alg:tree_lub} terminates because the matrix $F(T_X), F(T_Y)$ will always share the lower right element $(N)$. An example is shown in Appendix~\ref{subsec:lub-algs}. To see the output of Algorithm~\ref{alg:tree_lub} is the unique least upper bound of two trees $T_{X}$ and $T_{Y}$, first note that Algorithm~\ref{alg:tree_binary_lub} did not use any property of binary trees other than the fact that F-matrices of binary trees have the same diagonal and the same dimension. Now, if $F(T_{X}\vee T_{Y})$ exists, there exists a submatrix of $F(T_{X})$ and a submatrix of $F(T_{Y})$ obtained by row-column deletion operations, whose diagonals coincide. Call these matrices $F^-(T_X)$ and $F^-(T_Y)$. If $ F^-(T_X)= F^-(T_Y)$, then this is $F(T_{X}\vee T_{Y})$ by the same arguments in the proof of Algorithm~\ref{alg:tree_binary_lub}. If they are not the same, then we have the same conditions as in Algorithm~\ref{alg:tree_binary_lub} and it will find $F(T_{X} \vee T_{Y})$. We can now combine everything to prove our main result. 

\begin{theorem}\label{thm:tree_lattice}
The partial ordering $\leq $ defined by collapsing edges $(e,e+1)$ induces a lattice on the space $\mathcal{L}(\mathcal{MT}_N\cup \{\emptyset\}, \leq  )$. 
\end{theorem}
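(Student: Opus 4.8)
The plan is to invoke Theorem~\ref{thm:poset_2}, which reduces the lattice property of $P := \mathcal{MT}_N \cup \{\emptyset\}$ to three checks: that $P$ is bounded, that it has finite rank, and that whenever two elements $x,y$ both cover a common element $z$, the join $x\vee y$ exists. The first two are immediate. For boundedness, the star tree is the maximum (every tree collapses down to the single-internal-node star, so every element lies below it) and $\emptyset$ is the adjoined minimum by construction. For finite rank, I would note that by Theorem~\ref{thm:fmat_collapse} each collapse-edge covering relation deletes exactly one row and column of the F-matrix, hence removes exactly one internal node; since the number of internal nodes ranges between $1$ and $N-1$, every chain has at most $N$ elements and the rank is finite.

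The substance is the covering hypothesis, which I would split according to whether $z=\emptyset$ or $z$ is a genuine tree. When $z=\emptyset$, the elements covering $\emptyset$ are exactly the minimal elements of $\mathcal{MT}_N$, namely the fully resolved binary trees: any non-binary tree is covered from below by a tree with one additional internal node obtained by resolving one of its multifurcations, so it is not minimal. For two binary trees the join exists by Proposition~\ref{prop:binary-lub}, since the star tree is always a common upper bound and Algorithm~\ref{alg:tree_binary_lub} returns their unique least upper bound. When $z$ is a genuine tree $T_Z$ with $K$ internal nodes, $x$ and $y$ arise by collapsing two edges $(e_1,e_1+1)$ and $(e_2,e_2+1)$ of $T_Z$; since $x\neq y$ we have $e_1\neq e_2$, say $e_1<e_2$, and by Theorem~\ref{thm:fmat_collapse} we have $F(x)=F(T_Z)^{(-e_1,-e_1)}$ and $F(y)=F(T_Z)^{(-e_2,-e_2)}$.

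The key computation is that these two deletions commute and may be performed simultaneously. Using the collapsibility criterion~\eqref{eq:f-edge-condition}, I would verify that deleting row/column $e_1$ leaves the image of edge $(e_2,e_2+1)$, now in position $e_2-1$, still collapsible: its criterion requires the equalities $F_{e_2,j}=F_{e_2+1,j}$ only over the surviving columns $j\in\{1,\dots,e_2-1\}\setminus\{e_1\}$, which form a subset of those guaranteed by the original collapsibility of $e_2$ in $T_Z$. Symmetrically, deleting row/column $e_2$ leaves edge $(e_1,e_1+1)$ collapsible, since its criterion involves only columns $j<e_1<e_2$, none of which is disturbed. Hence the matrix obtained from $F(T_Z)$ by deleting both rows and columns $e_1$ and $e_2$ is a valid F-matrix by Lemma~\ref{lemma:fmat_remove}, and the corresponding tree $T_W$, having $K-2$ internal nodes, is a common upper bound of $x$ and $y$.

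Finally I would show $T_W$ is the \emph{least} upper bound. Any common upper bound $T_U$ has $F(T_U)$ equal to a valid shared submatrix of the $(K-1)\times(K-1)$ matrices $F(x)$ and $F(y)$ (Lemma~\ref{lemma:fmat_remove}); a shared submatrix of full dimension $K-1$ would force $F(x)=F(y)$ and hence $x=y$, so $F(T_W)$ of dimension $K-2$ is the largest shared valid submatrix, which is unique and equals the least upper bound by the argument underlying Algorithm~\ref{alg:tree_binary_lub} and its extension Algorithm~\ref{alg:tree_lub}. This verifies the covering hypothesis, and Theorem~\ref{thm:poset_2} then yields that $(P,\wedge,\vee)$ is a lattice. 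I expect the commuting-deletion verification to be the main obstacle: it is exactly the place where restricting to collapses of consecutive edges $(e,e+1)$, rather than arbitrary internal edges, is essential (cf.\ Figure~\ref{fig:fake_lattice_N_5}), and the index bookkeeping of how columns shift under deletion is the part most prone to error.
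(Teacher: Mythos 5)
Your proposal is correct and follows essentially the same route as the paper's proof: invoke Theorem~\ref{thm:poset_2} on the bounded, finite-rank poset, handle covers of $\emptyset$ (the binary trees) via Proposition~\ref{prop:binary-lub}, and handle covers of a tree $T_Z$ by showing that the two row/column deletions $e_1$ and $e_2$ can be performed simultaneously on $F(T_Z)$, yielding the least upper bound $F(T_Z)^{(-\{e_1,e_2\},-\{e_1,e_2\})}$. In fact, your explicit verification via Equation~\ref{eq:f-edge-condition} that the two deletions commute, and your dimension argument for leastness, spell out details that the paper's proof only asserts.
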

\begin{proof}
Because we proved the F-matrix encodes a unique multifurcating tree and can be used to find the LUB of two trees, we can work directly with F-matrices. The minimum and maximum element of $\mathcal{MT}_N$ are $\emptyset$ and $(N)$ respectively. We will use Theorem~\ref{thm:poset_2} because the rank of $\mathcal{MT}_N \cup \{\emptyset\}$ is finite (it is equal to $N$). \\

\noindent We first consider the case where our two elements cover a tree and not $\emptyset$ (note only binary trees cover $\emptyset$). Let $T_X, T_Y, T_Z$ be trees with $N$ tips where $T_X$ is obtained by collapsing edge $(e_X, e_X+1)$ of $T_Z$ and $T_Y$ is obtained by collapsing edge $(e_Y, e_Y+1)$ of $T_Z$. Without loss of generality, let $e_X < e_Y$. Since in $T_Z$ both edges $(e_X, e_X+1)$ and $(e_Y, e_Y+1)$ exists, it implies $T_X$ has the edge $(e_Y-1, e_Y)$ due to the shifting of internal nodes after collapsing an edge, and $T_Y$ has the edge $(e_X, e_X+1)$. The F-matrices of $T_X$ and $T_Y$ are $F(T_X)=F(T_Z)^{( -e_X, -e_X  )}$ and $F(T_Y)=F(T_Z)^{ ( -e_Y, -e_Y )}$. Then \[ F(T_X)^{\big ( -(e_Y-1),-( e_Y-1)\big)} = F(T_Y) ^{( -e_X, e_X)}= F(T_Z)^{\big ( -\{e_X, e_Y\}, -\{e_X, e_Y\} \big)}. \]
This is the largest submatrix shared between the two matrices, and it is an F-matrix by the proof of Proposition~\ref{thm:fmat_collapse}. 
Therefore, the tree with the F-matrix $F(T_Z)^{\big( -\{e_X, e_Y\}, -\{e_X, e_Y\} \big) }$ is the least upper bound of $T_X$ and $T_Y$. \\

\noindent The second case is when $T_X$ and $T_Y$ are both binary trees and cover $\emptyset$. Algorithm~\ref{alg:tree_binary_lub} and Proposition~\ref{prop:binary-lub} show that the unique least upper bound of any two binary trees exist. Hence, we have shown the unique LUB of $T_X, T_Y$ exists if $T_X, T_Y$ cover $T_Z \in \mathcal{MT}_N\cup \{\emptyset\}$. Hence, $T_X \vee T_Y$ exists for all $T_X, T_Y \in \mathcal{MT}_N\cup \{\emptyset\}$ and $\mathcal{L}(\mathcal{MT}_N\cup \{\emptyset\}, \wedge, \vee)$ is a lattice by Theorem~\ref{thm:poset_2}.
\end{proof}

Although Algorithm~\ref{alg:tree_lub} directly proves any two elements has a unique least upper bound, using Theorem~\ref{thm:poset_2} presents a cleaner proof that $\mathcal{L}(\mathcal{MT}_N\cup \{\emptyset\}, \leq )$ is indeed a lattice. The computation complexity of Algorithm~\ref{alg:tree_lub} is at most $O(N^3)$, since one can check column violations a maximum of $N$ times, and each check takes $O(N^2)$ operations. In practice, the computation time is less than $O(d^3)$, where $d$ is the dimension of the largest shared submatrix of $F(T_X), F(T_Y)$ and will be much smaller than $N$. The key to this lattice is that every multifurcating tree with $N$ tips has a path to the star tree with $N$ tips: sequentially collapsing the edge $(1,2)$ from the root will lead eventually to the star tree. This property also guarantees $\mathcal{MT}_N$ is connected. After defining the lattice and connecting tree properties to F-matrix properties, we can utilize the lattice to define Markov chains on $\mathcal{MT}_N$. Before we do so, we establish how to calculate the degree of any ranked tree shape to understand the graph induced by the lattice. 

\subsection{Degree of $T_{N,K}$}

Let $T_{N,K}$ be a ranked tree shape $T_{N,K}$ with $N$ tips and $K$ internal nodes. Define the backwards and forwards degree of $T_{N,K}$ respectively as  
\begin{align*}
    \text{deg}^-(T_{N,K}) &= \Big | \{ T_{N,K+1}: T_{N,K+1} \prec T_{N,K}\} \Big |, \\
    \text{deg}^+(T_{N,K}) &= \Big | \{ T_{N,K-1}: T_{N,K} \prec T_{N,K-1}\} \Big | .
\end{align*}
Visually, $\text{deg}^-(T_{N,K})$ counts how many arrows point towards $T_{N,K}$ and $\text{deg}^+(T_{N,K})$ counts how many arrows emanate from $T_{N_K}$ in the Hasse diagram. The forwards degree counts how many edges of type $(e,e+1)$ exist in $T_{N,K}$. The backwards degree of a tree counts how many ways there are to split one of the existing multifurcating events into two consecutive multifurcating events. For this purpose, we exclude $\emptyset$ which was used to construct the lattice. For example, all binary trees have backwards degree equal to 0. The star tree has backwards degree equal to $N-2$ and forwards degree equal to 0. We will also use assumption this in Section 4 when we define our Markov chains.

\begin{theorem}\label{thm:tree-deg}
The forwards degree of a tree $T_{N,K}$ is \[  \text{deg}^+(T_{N,K}) = 1 + \sum_{e=2}^{K} \mathds{1}(F_{e, 1:e-1} = F_{e+1, 1:e-1} ). \]
The backwards degree of $T_{N,K}$ is \[ \text{deg}^-(T_{N,K}) \coloneq \sum_{i=1}^K U(k_i, l_i) = \sum_{i=1}^K \Big ((l_i+1)2^{k_i} -k_i-3 + \mathds{1}(l_i=0) \Big ), \] where $l_i, k_i, i=1,...,K$ are the number of leaves and internal nodes that subtend from node $i$ respectively, and $U(k_i, l_i)$ denotes the number of ways a single collapse edge operation can result in an internal node with ranking $i$ that subtends $l_i$ leaves and $k_i$ internal nodes.
\end{theorem}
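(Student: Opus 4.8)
The plan is to treat the two degrees separately, doing the forwards degree first because it reflects the collapse operation directly, and then the backwards degree by analysing its inverse. For the forwards degree I would argue that $\text{deg}^+(T_{N,K})$ equals the number of edges $(e,e+1)$ present in $T_{N,K}$, with each such edge producing a genuinely distinct cover. By definition $T_{N,K-1}$ covers $T_{N,K}$ exactly when it arises by collapsing one edge $(e,e+1)$, and by Theorem~\ref{thm:fmat_collapse} this corresponds to deleting row and column $e$ of $F:=F(T_{N,K})$. The edge $(1,2)$ is always present (for $K\ge 2$), giving the leading $1$; and for $e\ge 2$ the criterion in Equation~\ref{eq:f-edge-condition}, namely $F_{e,1:e-1}=F_{e+1,1:e-1}$, characterizes exactly when the edge exists, which is the indicator summed from $e=2$ to $K$. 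The one remaining point is that collapsing two distinct edges never yields the same tree: deleting row/column $e$ removes the diagonal value $F_{e,e}$, and since $F_{1,1}<\cdots<F_{K,K}$ are distinct, distinct $e$ produce F-matrices with distinct diagonals, hence (by the bijection of Theorem~\ref{thm:fmat_multi}) distinct trees. Summing the distinct contributions gives the claimed formula.

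For the backwards degree I would first describe the inverse of the collapse, which I call a \emph{split}: pick an internal node $i$ of $T_{N,K}$, create a new node $i'$ of rank $i+1$ (shifting all ranks $>i$ up by one), declare $i'$ a child of $i$, and reassign to $i'$ some of node $i$'s original descendants. Because tips are unlabeled while the $k_i$ internal children carry distinct ranks, a split of node $i$ is specified by a pair $(a,S)$: the number $a\in\{0,\dots,l_i\}$ of leaves moved to $i'$ and the subset $S$ of internal children moved to $i'$, for $(l_i+1)2^{k_i}$ raw choices. A split is legal exactly when both resulting nodes are valid furcations, i.e. $i'$ has at least two children ($a+|S|\ge 2$) and $i$ retains at least one original child so that, together with $i'$, it keeps at least two children (equivalently, not both $a=l_i$ and $S$ equal to all the internal children).

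I would then count legal splits of a fixed node by inclusion--exclusion. The configurations violating the $i'$-constraint are $(a,|S|)\in\{(0,0),(0,1),(1,0)\}$ subject to $a\le l_i$, namely $1$ way, $k_i$ ways, and $\mathds{1}(l_i\ge 1)$ ways, totalling $1+k_i+\mathds{1}(l_i\ge 1)$; the single configuration violating the $i$-constraint is $a=l_i,\,S=\text{all}$; and these two sets are disjoint since the former has $a+|S|\le 1$ while the latter has $a+|S|=l_i+k_i\ge 2$ (node $i$ being internal). Subtracting yields $U(k_i,l_i)=(l_i+1)2^{k_i}-k_i-2-\mathds{1}(l_i\ge 1)=(l_i+1)2^{k_i}-k_i-3+\mathds{1}(l_i=0)$, matching the stated formula, where the indicator corrects for the fact that the move $(1,0)$ does not exist when $l_i=0$.

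Finally I would assemble the total by showing the map sending a legal split $(i,a,S)$ to the resulting tree $T_{N,K+1}$ is a bijection onto $\{T_{N,K+1}:T_{N,K+1}\prec T_{N,K}\}$. Surjectivity is immediate: if $T_{N,K+1}\prec T_{N,K}$ then $T_{N,K}$ is obtained by collapsing some edge $(e,e+1)$ of $T_{N,K+1}$, so $T_{N,K+1}$ is the split of the corresponding node of $T_{N,K}$. The main obstacle is injectivity, i.e. ruling out that one cover is produced by two different splits. Here I would invoke the distinctness established in the forwards part: in $T_{N,K+1}$ distinct collapsible edges collapse to distinct trees, so there is a \emph{unique} edge of $T_{N,K+1}$ whose collapse returns $T_{N,K}$. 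That edge pins down both the split node $i$ (its lower endpoint after re-ranking) and, via the children of $i'$, the parameters $(a,S)$, so $(i,a,S)$ is recovered uniquely. Hence $\text{deg}^-(T_{N,K})=\sum_{i=1}^K U(k_i,l_i)$. I expect this uniqueness step to be the crux, and the distinctness-of-diagonals observation from the forwards degree is precisely what makes it go through.
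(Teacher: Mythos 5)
Your proposal is correct, and for the backwards degree it takes a genuinely different route from the paper. The paper obtains $U(k_i,l_i)$ by summing over the size $s\in\{2,\dots,k_i+l_i-1\}$ of the new furcation and the number $j$ of leaves assigned to it, producing the double sum recorded as Equation~\ref{eq:backwards_degree}, and then establishes the closed form $(l_i+1)2^{k_i}-k_i-3+\mathds{1}(l_i=0)$ by induction on $l$ (base cases $l=1,2$, inductive step $U(k,l+1)-U(k,l)=2^k$), treating the cases $k_i=0$ and $l_i=0$ separately. You instead parametrize a split of node $i$ by the pair $(a,S)$, obtain the raw count $(l_i+1)2^{k_i}$ in one step, and remove illegal configurations by inclusion--exclusion; since your two violation sets are disjoint ($a+|S|\le 1$ versus $a+|S|=l_i+k_i\ge 2$), the count $(l_i+1)2^{k_i}-\bigl(1+k_i+\mathds{1}(l_i\ge 1)\bigr)-1$ is exact and simplifies immediately to the stated formula, with no induction and with the bifurcation and degenerate cases subsumed automatically. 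Your write-up is also more careful than the paper's on two points it leaves implicit: (i) that distinct collapsible edges of a tree collapse to distinct trees --- your observation that deleting row and column $e$ removes the diagonal entry $F_{e,e}$, and that the diagonal entries are strictly increasing by condition F1, settles this via the bijection of Theorem~\ref{thm:fmat_multi}; and (ii) that splits are in bijection with covering trees, where you reuse (i) to pin down the unique collapsed edge and hence recover $(i,a,S)$ uniquely. This distinctness is exactly the kind of property that fails for the ``collapse any internal edge'' operation the paper rejects when constructing the lattice, so making it explicit is worthwhile. What the paper's derivation buys is the explicit bin-splitting sum, in the same stars-and-bars style as its enumeration results in Section~\ref{sec:string}; what yours buys is a shorter, self-contained count and a rigorous justification that summing $U(k_i,l_i)$ over $i$ really computes $\deg^-(T_{N,K})$.
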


Note the values of $l_i,k_i$ are calculated directly from the string representation $\{t,l\}$ of $T_{N,K}$: $(l_1,...,l_k)$ is precisely the $l$ vector of the string representation, and $k_i = \big |\{j \in \{1,...,K\}: t_j=i\} \big |$. The proof is in Appendix~\ref{subsec:lattice-proofs}. The total degree of $T_{N,K}$ is $\text{deg}^+(T_{N,K}) + \text{deg}^-(T_{N,K})$. \\

In addition to calculating the degree of every tree, we want to find the most connected tree in $\mathcal{MT}_N$, which is tree with the maximum degree. Knowing the maximum degree, which we will denote by $M_N$, gives an upper bound to the total degree of the graph and a better understanding of graph connectivity. For $N=4$ and $N=5$, we can look at Figures~\ref{fig:lattice-4} and \ref{fig:lattice-5} to see $M_4=3$ and $M_5=5$. Figure~\ref{fig:max_backwards_degree_trees} shows the ranked tree shapes with largest total degree for $N=4,...,9$ and we see they have a similar pattern. These trees have the greatest backwards and total degree on $\mathcal{MT}_N$ and their forward degree is equal to 1. Intuitively, this tree has the maximum backwards degree because in order to maximize the number of possible splits, we want to maximize both the number of internal nodes that subtend the node, as well as the total number of multifurcations in that node (distinct nodes plus leaves). Since each internal node must subtend at least two leaves, there cannot be more than $\lfloor N/2 \rfloor + 1$ internal nodes. In terms of shape, the maximum degree tree also resembles the lodgepole tree \citep{disanto2015coalescent}, a family of binary, unranked, and unlabeled trees that grows recursively by adding a tree of two leaves to a common root. 

\begin{figure}[h]
    \centering
    \includegraphics[width=0.65\linewidth]{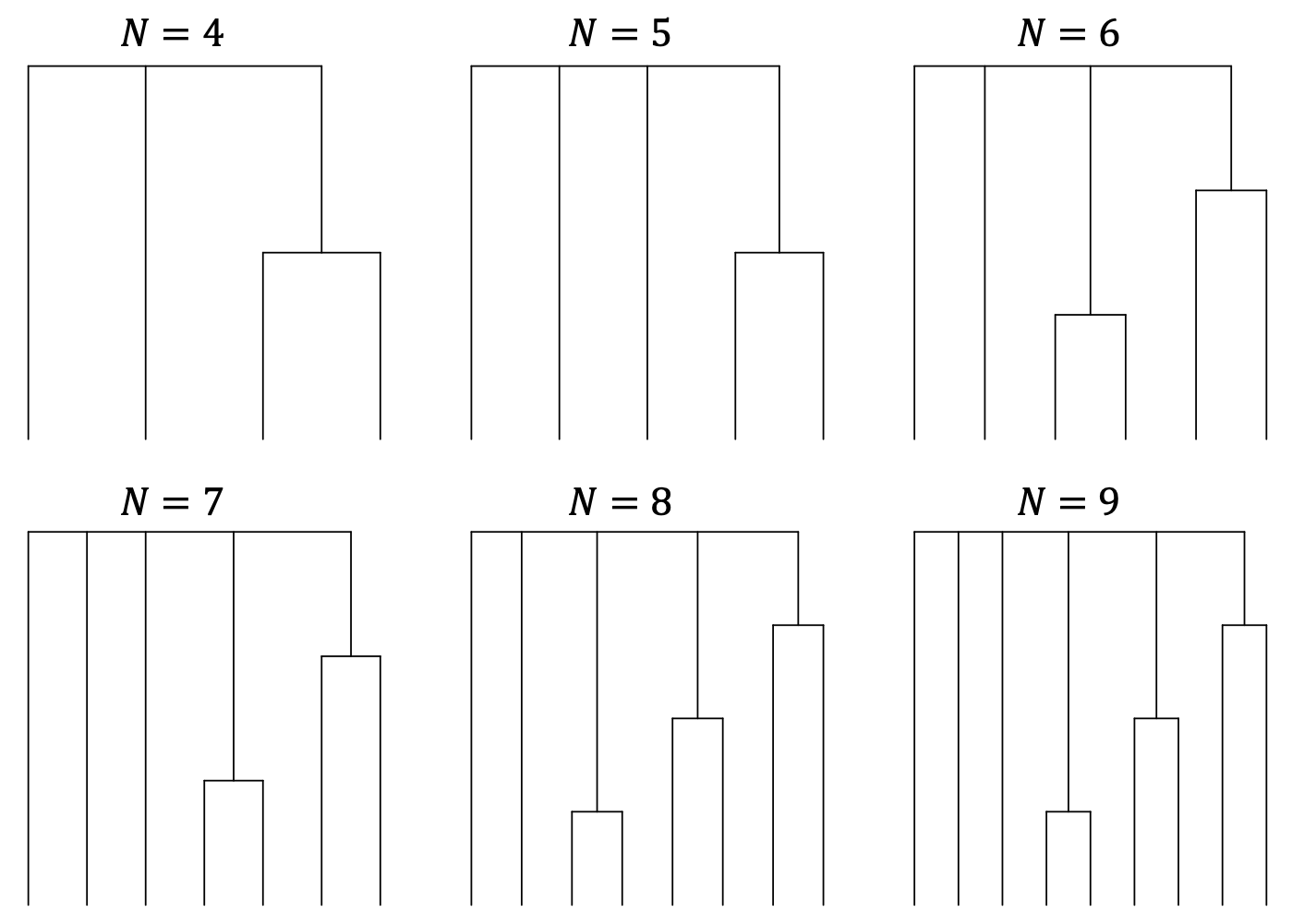}
    \caption{Tree with the maximum degree in $\mathcal{MT}_N$ for $N=4,5,6,7,8,9$. Notice that they all have a similar shape. The maximum backwards degree are $2,4,7,11,18,26$ for each tree with $N$ tips respectively. Each of these trees has forward degree 1.}
    \label{fig:max_backwards_degree_trees}
\end{figure}

We will explicitly find the tree in $\mathcal{MT}_N$ with the maximum backwards degree, and then show it has the maximum total degree. The maximization of the backwards degree (Equation~\ref{eq:backwards_degree}) can be written as the integer programming problem: \\ 
\begin{equation}\label{eq:M_opt_problem} 
\begin{split}
    \max_{\substack{\scriptstyle K=1,...,N-1 \\ l_1,...,l_K \\ k_1, ...,k_K}} & \sum_{i=1}^K (l_i+1)2^{k_i} -k_i-3 + \mathds{1}(l_i=0) \\ 
    \text{subject to } & \sum_{i=1}^K l_i= N, \;\; \sum_{i=1}^K k_i = K-1 \\
    & l_i + k_i \geq 2 \; \forall \; i=1,...,K \\
    & k_i \leq K-i, k_1 > 0 \text{ if } K \geq 2
\end{split}
\end{equation}
The first two constraints correspond to trees having  $N$ total leaves and $K-1$ non-root internal nodes. The third constraint corresponds to each internal node having at least two children. Finally, the last two constraints correspond to the $i$th internal node having at most $K-i$ internal nodes as children, and the root (first internal node) must have at least one internal node as a child. The last constraint also implies that $k_K=0$, since the last internal node can only have leaves as children. The next two results find the tree with the maximum backwards degree and prove it is the tree with the maximum total degree. The proof of Theorem~\ref{thm:max-deg-tree} is in Appendix~\ref{subsec:lattice-proofs}. 

\begin{theorem}\label{thm:max-deg-tree}
The tree $\ddot{T}_N$ with the maximum backwards degree in $\mathcal{MT}_N$ is the multifurcating tree with the following properties 
\begin{enumerate}
    \item It has $K=\lfloor \frac{N-2}{2} \rfloor+1$ internal nodes.
    \item Each of the $\lfloor \frac{N-2}{2} \rfloor$ non-root internal nodes subtend distinct cherries descending from the root.
    \item The rest of the $L=2 + (N \text{ mod } 2)$ tips are leaves descending from the root.
\end{enumerate}
The backwards degree of $\ddot{T}_N$ is \[ \text{deg}^-(\ddot{T}_N) = \big (3+ (N \text{ mod } 2) \big) 2^{\left \lfloor \frac{N}{2} \right \rfloor} - \left \lfloor \frac{N}{2} \right \rfloor - 3 .\]
\end{theorem}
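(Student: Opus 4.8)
The plan is to read the claim as the solution of the integer program~\eqref{eq:M_opt_problem} and to solve it in two nested stages: fix the number of internal nodes $K$, find the optimal shape $\{(k_i),(l_i)\}$ for that $K$, and then optimize the resulting value over $K$. Using $\sum_i k_i=K-1$, I would first rewrite the objective of~\eqref{eq:M_opt_problem} as
\[
\sum_{i=1}^K U(k_i,l_i)=\sum_{i=1}^K (l_i+1)2^{k_i}-4K+1+\sum_{i=1}^K\mathds{1}(l_i=0),
\]
so that for fixed $K$ the additive constant $-4K+1$ drops out and the task reduces to maximizing $\Phi:=\sum_i(l_i+1)2^{k_i}+\sum_i\mathds{1}(l_i=0)$ under $\sum_i l_i=N$, $\sum_i k_i=K-1$, the per-node bounds $l_i+k_i\ge 2$, and the ranking constraints $k_i\le K-i$ with $k_1\ge 1$.

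The core of the argument is to show that, for each fixed $K$, $\Phi$ is maximized by the root-concentrated tree: $k_1=K-1$, every non-root node a cherry ($k_i=0$, $l_i=2$), and the remaining $L:=N-2(K-1)$ leaves hung at the root. I would prove this with two exchange arguments built on the convexity of $k\mapsto 2^k$. First, fixing the profile $(k_i)$, moving any discretionary leaf onto a node of maximal $k_i$ changes $\Phi$ by $2^{k_{\max}}-2^{k_j}\ge 0$, so at an optimum every non-root node keeps only its mandatory leaves ($2$ if $k_i=0$, $1$ if $k_i=1$, $0$ otherwise) and all slack leaves sit on one node. Second, fixing the leaves, transferring an internal child from a node with $k_b\ge 1$ to the node of largest $k$ strictly increases $\sum_i 2^{k_i}$ by convexity; iterating drives all $K-1$ internal children onto a single node, which $k_i\le K-i$ forces to be the root, since only rank $1$ can attain $k=K-1$. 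It then remains to check that the indicator terms cannot overturn the conclusion: a node with $l_i=0$ requires $k_i\ge 2$ and contributes only $+1$, which is dominated by the exponential gain from concentrating mass, and in the final configuration every $l_i\ge 2$ so these terms vanish.

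Substituting this shape, each non-root cherry contributes $U(0,2)=0$ and the root contributes $U(K-1,L)=(L+1)2^{K-1}-(K-1)-3$, giving the per-$K$ optimum $g(K)=(N-2K+3)2^{K-1}-K-2$. With $m=K-1$ the forward difference is
\[
g(m+1)-g(m)=2^{m}(N-2m-3)-1,
\]
which is nonnegative exactly while $N-2m-3\ge 1$ and is negative once $N-2m-3\le 0$; hence $g$ increases up to and attains its maximum at $m=\lfloor (N-2)/2\rfloor$, i.e.\ $K=\lfloor (N-2)/2\rfloor+1=\lfloor N/2\rfloor$. This is exactly the asserted structure: $\lfloor (N-2)/2\rfloor$ non-root cherries off the root and $L=N-2(K-1)=2+(N\bmod 2)$ pendant leaves at the root. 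Evaluating $g$ at $m^\ast=K-1$ then gives the closed-form backwards degree $(L+1)2^{K-1}-K-2$, which reproduces the maxima $2,4,7,11,18,26$ reported for $N=4,\dots,9$ in Figure~\ref{fig:max_backwards_degree_trees}.

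I expect the structural step of the second paragraph to be the main obstacle: making the two exchange arguments watertight when the ranking constraints $k_i\le K-i$ and the per-node minima $l_i+k_i\ge 2$ interact, because a single child-transfer simultaneously changes a node's weight $2^{k_i}$ and its mandatory leaf count, so the leaf and child exchanges cannot be applied independently and must be combined. I would handle the ranking obstruction by first relabeling any profile so that a node of maximal $k$ carries rank $1$ (which leaves $\Phi$ unchanged) before transferring children, and I would settle the degenerate boundary cases---$K=1$ (the star) and the smallest values of $N$---by direct comparison against the configurations underlying Figure~\ref{fig:max_backwards_degree_trees}.
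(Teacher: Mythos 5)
Your two-stage plan---optimize the shape for fixed $K$, then optimize over $K$---is the same skeleton as the paper's proof, but both stages are implemented differently: where you use leaf- and child-exchange arguments built on the convexity of $k \mapsto 2^k$, the paper invokes a vertex-of-the-integer-program argument to force $k_2=\cdots=k_K=0$, $k_1=K-1$; and where you difference $g(m)=(N-2m+1)2^{m}-m-3$ explicitly, the paper applies its Lemma~\ref{lemma:Ukl_comparisons} (comparison of $U(k,l)$ along $2k+l$ constant). Your differencing computation is correct and arguably cleaner. The genuine gap is the regime $K>\lfloor N/2\rfloor+1$: there $L=N-2(K-1)<0$, the root-concentrated tree does not exist, and your structural claim (``for each fixed $K$, $\Phi$ is maximized by the root-concentrated tree'') is vacuous or false. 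Concretely, the child-transfer exchange is blocked by the constraint $l_i+k_i\ge 2$: moving the unique internal child off a node $b$ with $k_b=1$ forces $b$ to acquire a second pendant leaf, and once $2(K-1)>N$ there are no slack leaves left to supply, so the exchange process terminates at a non-root-concentrated shape. For instance, at $K=N-1$ every tree is binary with backwards degree $0$, while $g(N-1)<0$; so $g(K)$ is not an upper bound for the per-$K$ maximum in this regime, and maximizing $g$ over $K$ says nothing about these trees. You need the paper's separate case---for $K>\lfloor N/2\rfloor+1$ the largest furcation involves at most $N-K+1$ children, so the $k+l$-fixed part of Lemma~\ref{lemma:Ukl_comparisons} bounds the backwards degree by $2^{N-K+1}-N+K-3$, which is dominated by the optimum of $g$---or an equivalent bound.

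Separately, your closed form is right and the theorem's display is not: evaluating at $K=\lfloor N/2\rfloor$, $L=2+(N \bmod 2)$ gives $\deg^-(\ddot{T}_N)=\big(3+(N\bmod 2)\big)2^{\lfloor N/2\rfloor-1}-\lfloor N/2\rfloor-2$, which reproduces the values $2,4,7,11,18,26$ of Figure~\ref{fig:max_backwards_degree_trees} and agrees with the paper's own intermediate computation $2^{N/2}+2^{(N/2)-1}-N/2-2$ for even $N$. The formula displayed in the theorem, $\big(3+(N\bmod 2)\big)2^{\lfloor N/2\rfloor}-\lfloor N/2\rfloor-3$, is an off-by-one slip: it equals the correct value with $N$ replaced by $N+2$ (e.g., it gives $7$ for $N=4$, where the true maximum is $2$). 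So on this point your attempt fails to match the statement only because the statement itself is in error; you should flag that discrepancy explicitly rather than silently substitute the corrected formula. The exchange-argument subtleties you flag yourself---the indicator terms $\mathds{1}(l_i=0)$, the interaction of mandatory leaves with child transfers, and re-ranking so that the maximal-$k$ node is the root---are real but surmountable; the large-$K$ case is the one piece of the argument that is actually missing.
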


\begin{theorem}
The tree $\ddot{T}_N$ with the maximum backwards degree in $\mathcal{MT}_N$ also has the maximum total degree $M_{N}=1+ \deg^-(\ddot{T}_N)$.
\end{theorem}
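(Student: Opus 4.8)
The plan is to work directly with the total degree $D(T)=\deg^{+}(T)+\deg^{-}(T)$ and to show $\max_{T\in\mathcal{MT}_N}D(T)=1+\deg^{-}(\ddot{T}_N)$, attained at $\ddot{T}_N$. Achievability is immediate: Theorem~\ref{thm:max-deg-tree} (and its construction) gives $\deg^{+}(\ddot{T}_N)=1$, so $D(\ddot{T}_N)=1+\deg^{-}(\ddot{T}_N)$. The content is the matching upper bound $D(T)\le 1+\deg^{-}(\ddot{T}_N)$ for every $T$. Note that $\deg^{-}(T)\le\deg^{-}(\ddot{T}_N)$ alone (Theorem~\ref{thm:max-deg-tree}) is insufficient, since the forward degree can be large; and merely bounding $\deg^{+}(T)\le K-1$ and adding is too lossy, because for the root-concentrated tree on $K$ internal nodes the forward degree is $1$, not $K-1$. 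The forward and backward degrees must be controlled \emph{jointly}. The first step is a sharper forward bound. Reading off the string vector $t$, the edge $(e,e+1)$ exists iff $t_{e+1}=e$, so $\deg^{+}(T)=\lvert\{i\in\{2,\dots,K\}:t_i=i-1\}\rvert$, and since $t_2=1$ always, $\deg^{+}(T)=1+\lvert\{i\ge 3:t_i=i-1\}\rvert$. For each such $i$, node $i-1\ge 2$ is a non-root internal node carrying an internal child, and $i\mapsto i-1$ is injective; hence $\deg^{+}(T)\le 1+\lvert P(T)\rvert$, where $P(T)=\{j\ge 2:k_j\ge 1\}$ and the $k_j$ are read from $t$ as in Theorem~\ref{thm:tree-deg}.

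This reduces the theorem to the single inequality
\[ \deg^{-}(T)+\lvert P(T)\rvert \;\le\; \deg^{-}(\ddot{T}_N)\qquad\text{for all }T. \]
Using the per-node formula $\deg^{-}(T)=\sum_{i=1}^{K}U(k_i,l_i)$ of Theorem~\ref{thm:tree-deg}, the left side equals $\sum_{i=1}^{K}U(k_i,l_i)+\sum_{i=2}^{K}\mathds{1}(k_i\ge 1)$. I would therefore re-run the integer program of Equation~\ref{eq:M_opt_problem} with this \emph{augmented} objective over the identical feasible region (the constraints $\sum l_i=N$, $\sum k_i=K-1$, $l_i+k_i\ge 2$, $k_i\le K-i$, $k_1>0$), the modification being a reward of $+1$ for every non-root node that carries an internal child, and show its optimum is still $\deg^{-}(\ddot{T}_N)$, attained at $\ddot{T}_N$ (where the bonus sum vanishes).

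The main obstacle is solving this augmented program, and I would do it by the exchange strategy of Theorem~\ref{thm:max-deg-tree}. Since in $U(k,l)$ the marginal value of a leaf at node $i$ is $2^{k_i}$, which is maximized at the node of largest $k$, I would argue in two phases: first migrate each non-root node's leaves to the root, down to the minimum needed to keep it a valid furcation (weakly increasing the objective, as the root carries the most internal children), then reattach every misplaced internal subtree directly to the root. Reattaching a subtree rooted at a child of node $j$ multiplies the root term by a factor at least $2$ — an exponential gain — while the only adverse effects are the loss of node $j$'s leaf weighting and at most one unit of bonus. The delicate point, which is exactly where the naive single-child move fails to be monotone, is verifying that this gain dominates \emph{once leaves have first been migrated}, so that the $+1$ bonuses never tip the balance and the furcation constraint $l_i+k_i\ge 2$ is respected throughout. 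These moves preserve $K$ and drive any $T$ to the root-concentrated tree on its number of internal nodes, after which optimizing over $K$ reproduces $\ddot{T}_N$ exactly as in Theorem~\ref{thm:max-deg-tree}.

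I would also flag one structural subtlety that rules out a shorter, per-$K$ route: the inequality cannot be localized to fixed $K$, since a binary caterpillar has $\deg^{-}=0$ yet $\lvert P\rvert$ of order $N$, so $\deg^{-}+\lvert P\rvert$ exceeds the within-$K$ maximum backward degree. The bound holds only globally, precisely because trees with near-maximal backward degree are forced to concentrate their internal children at the root and therefore have small $\lvert P\rvert$; this anti-correlation is the phenomenon the exchange argument makes quantitative. Granting the augmented optimum, the chain $D(T)\le 1+\lvert P(T)\rvert+\deg^{-}(T)\le 1+\deg^{-}(\ddot{T}_N)=D(\ddot{T}_N)$ closes the proof.
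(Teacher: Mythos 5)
Your setup is correct and, in its first half, cleaner than the paper's: the identity $\deg^{+}(T)=\lvert\{i\in\{2,\dots,K\}:t_i=i-1\}\rvert$, the injection $i\mapsto i-1$ into non-root nodes carrying an internal child, and the resulting bound $\deg^{+}(T)\le 1+\lvert P(T)\rvert$ are all valid, and achievability at $\ddot{T}_N$ is handled exactly as in the paper. But after this reduction everything rests on the single inequality $\deg^{-}(T)+\lvert P(T)\rvert\le \deg^{-}(\ddot{T}_N)$, and you do not prove it. You recast it as an augmented version of the integer program in Equation~\ref{eq:M_opt_problem} and describe a two-phase exchange (migrate leaves to the root, then reattach internal subtrees to the root), but you yourself flag the decisive verification --- that the exponential gain dominates the lost leaf weighting and the $+1$ bonuses while the constraint $l_i+k_i\ge 2$ stays satisfied --- as ``the delicate point'' and leave it open. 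That step is the entire content of the theorem; as written, the proposal is a proof plan, not a proof.

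There is also a structural flaw in the plan itself. You assert the exchange moves ``preserve $K$ and drive any $T$ to the root-concentrated tree on its number of internal nodes,'' but for $K>\lfloor N/2\rfloor+1$ no such tree exists, since the $K-1$ non-root children of the root would each need at least two leaves, forcing $2(K-1)\le N$; indeed, by your own caterpillar example, for $K=N-1$ the within-$K$ maximizer of the augmented objective is the caterpillar, which is as far from root-concentrated as possible. So large $K$ requires a separate argument --- this is precisely why the paper's proof of Theorem~\ref{thm:max-deg-tree} splits into the cases $K\le\lfloor N/2\rfloor+1$ and $K>\lfloor N/2\rfloor+1$ --- and your sketch is silent on it; your closing (correct) remark that the bound ``holds only globally'' is in direct tension with a $K$-preserving exchange. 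For comparison, the paper closes the step you leave open with a per-unit trade: every unit of forward degree beyond $1$ forces $k_e\ge 1$ at some non-root node $e$, and since $\sum_{i=1}^{K}k_i=K-1$, diverting that internal child away from the high-$k$ node costs backward degree at least $U(k,l)-U(k-1,l)=(l+1)2^{k-1}-1>1$, using the node-wise formula of Theorem~\ref{thm:tree-deg}. A quantitative charge of exactly this kind --- for each element of $P(T)$, a backward-degree loss strictly exceeding $1$, established under the furcation constraints and uniformly in $K$ --- is what your argument still owes before the chain $D(T)\le 1+\lvert P(T)\rvert+\deg^{-}(T)\le 1+\deg^{-}(\ddot{T}_N)$ can be asserted.
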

\begin{proof}
By construction, $\ddot{T}_N$ has forward degree equal to one because the tree has edges $(1,2), (1,3),..., (1,\lfloor \frac{N}{2}\rfloor)$, meaning \[ \text{deg}(\ddot{T}_N) = \big (3+ (N \text{ mod } 2) \big) 2^{\left \lfloor \frac{N}{2} \right \rfloor} - \left \lfloor \frac{N}{2} \right \rfloor - 2. \] 
To show that no other tree with higher forward degree will have total degree exceeding deg($\ddot{T}_{N}$), note that for every increase by 1 in the forward degree, the backwards degree will decrease by more than 1, leading to a smaller total degree. This is because for edge $(e,e+1)$, $k_e \geq 1$ which implies the other internal nodes will subtend less internal nodes, as $\sum^{K}_{i=1}k_{i}=K-1$. The backwards degree will decrease by at least $U(k, l) - U(k-1,l) = (l+1) 2^{(k-1)} -1 >1$. Hence, the tree $\ddot{T}_N$ has the maximum total degree. 
\end{proof}

The maximum degree is on the order of $M_N \sim O(2^{N/2})$, and so the total degree of the graph on $\mathcal{MT}_N$ is very loosely upper bounded by $G(N) M_N$. The maximum degree will be used in the next section to bound the mixing time of Markov chains on $\mathcal{MT}_N$. 

\section{Markov chains on multifurcating trees} \label{sec:mc}

Establishing a lattice on $\mathcal{MT}_N$ allows us to define Markov chains on the graph, where we exclude the element $\emptyset$. Here, we analyze two Markov chains (MC) in terms of their mixing times and apply them to study the empirical distribution of certain statistics of multifurcating trees. For a Markov chain on the space $\Omega$, with transition probability matrix $P$, and stationary distribution $\pi$, the \textbf{\textit{mixing time}} is defined to be \[ t^{\text{mix}}(\epsilon)= \sup_{x_0 \in \Omega} \inf \{ t>0: \lVert P^t(x_0, \cdot) - \pi(\cdot) \rVert_{TV} < \epsilon\}. \] We will compare mixing times in terms of $t^{\text{mix}} \equiv t^{\text{mix}}(1/4)$. \\

We will use various techniques to find bounds for the mixing times. The first technique for the lower bound is the diameter bound for irreducible and aperiodic MCs. Let $P$ be the transition matrix of an MC on $\Omega$ and construct the graph with vertex set $\Omega$ and edges $(x,y)$ if $P(x,y) + P(y,x) >0$. The \textbf{\textit{diameter}} $L$ of a Markov chain is the maximal graph distance between distinct vertices.
\begin{theorem}[Section 7.1.2 of \cite{levin2017markov}] \label{thm:diameter-bound}
The mixing time is lower bounded by $t^{\textup{mix}}\geq \frac{L}{2}.$
\end{theorem}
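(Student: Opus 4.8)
The plan is to use the fact that an irreducible aperiodic chain advances by at most one graph-edge per step: after $t$ steps, the law $P^t(x,\cdot)$ is supported on the ball $B(x,t)=\{z:d(x,z)\le t\}$ of the diameter graph. The two endpoints of a diameter-realizing geodesic then yield time-$t$ laws with disjoint supports as long as $2t<L$, and the total-variation triangle inequality forbids both of them from being simultaneously close to $\pi$.

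First I would establish the support containment. If $P^t(x,y)>0$ there is a sequence $x=z_0,z_1,\dots,z_t=y$ with each $P(z_i,z_{i+1})>0$, hence each consecutive pair is joined by an edge of the graph (recall edges are the pairs with $P(a,b)+P(b,a)>0$); this gives $d(x,y)\le t$, so $\operatorname{supp}\big(P^t(x,\cdot)\big)\subseteq B(x,t)$. Next, fix $x_0,y_0$ with $d(x_0,y_0)=L$ and an integer $t<L/2$. If some $z$ lay in $B(x_0,t)\cap B(y_0,t)$ then $L\le d(x_0,z)+d(z,y_0)\le 2t<L$, a contradiction, so the two balls are disjoint and the measures $P^t(x_0,\cdot)$, $P^t(y_0,\cdot)$ live on disjoint sets; hence $\lVert P^t(x_0,\cdot)-P^t(y_0,\cdot)\rVert_{TV}=1$. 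The triangle inequality against $\pi$,
\[
1=\lVert P^t(x_0,\cdot)-P^t(y_0,\cdot)\rVert_{TV}\le \lVert P^t(x_0,\cdot)-\pi\rVert_{TV}+\lVert P^t(y_0,\cdot)-\pi\rVert_{TV},
\]
then forces at least one summand to be at least $1/2>1/4$ for every integer $t<L/2$.

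To convert this into the stated bound under the paper's $\sup_{x_0}\inf\{\cdots\}$ definition of $t^{\mathrm{mix}}$, I would add one standard ingredient: for each fixed start $x$ the map $t\mapsto\lVert P^t(x,\cdot)-\pi\rVert_{TV}$ is non-increasing, because applying the stochastic kernel $P$ is a contraction in total variation and $\pi P=\pi$. Writing $a(t)=\lVert P^t(x_0,\cdot)-\pi\rVert_{TV}$ and $b(t)=\lVert P^t(y_0,\cdot)-\pi\rVert_{TV}$, suppose both first-hitting times $\tau(x_0),\tau(y_0)$ of the open $1/4$-ball were $<L/2$. At $t^\ast=\max\{\tau(x_0),\tau(y_0)\}<L/2$ monotonicity gives $a(t^\ast),b(t^\ast)<1/4$, so $a(t^\ast)+b(t^\ast)<1/2$, contradicting $a(t^\ast)+b(t^\ast)\ge 1$. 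Hence $\max\{\tau(x_0),\tau(y_0)\}\ge L/2$, and taking the supremum over starts yields $t^{\mathrm{mix}}\ge L/2$.

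The genuinely delicate points are exactly these bookkeeping issues rather than any deep estimate: the disjointness of balls, and hence the $1/2$ lower bound, is valid only on the strict range $t<L/2$, which is what renders the even-$L$ boundary case (where $B(x_0,L/2)$ and $B(y_0,L/2)$ may share a geodesic midpoint) harmless; and the monotonicity of each trajectory's distance to $\pi$ is what lets the per-time statement survive the passage through the $\sup$-$\inf$ definition. Irreducibility and aperiodicity enter only to guarantee that $\pi$ exists, is unique, and is the limit of $P^t(x,\cdot)$, so that the hitting times and the mixing time are well defined; they play no role in the support or disjointness steps.
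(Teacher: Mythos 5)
Your proof is correct. Note that the paper does not prove this statement at all---it is quoted with a citation to Section 7.1.2 of \cite{levin2017markov}---and your argument is essentially the standard one given there: disjointness of the supports of $P^t(x_0,\cdot)$ and $P^t(y_0,\cdot)$ for $t<L/2$, followed by the triangle inequality against $\pi$. The one place where you go beyond the textbook sketch, using monotonicity of $t\mapsto\lVert P^t(x,\cdot)-\pi\rVert_{TV}$ to pass from a per-time statement to the paper's $\sup$-$\inf$ definition of $t^{\mathrm{mix}}$, is exactly the right bookkeeping for that definition and is handled correctly.
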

The second approach for irreducible and aperiodic MCs is based on the \textit{\textbf{bottleneck ratio}} of a set $S\subset \Omega$ defined as:
\[ \Phi(S) = \frac{Q(S, S^C)}{\pi(S)},\] 
and the bottleneck ratio of the whole Markov chain as 
\begin{equation}\label{eq:bottleneck-def}
    \Phi_* = \min_{S: \pi(S) \leq \frac{1}{2}} \Phi(S),
\end{equation}
where $Q(x,y) = \pi(x) P(x,y)$ and $Q(A,B) = \sum_{x\in A, y \in B} Q(x,y)$ is the probability of moving from $A$ to $B$ in one step when starting from the stationary distribution $\pi$. 
\begin{theorem}[Theorem 7.3 of \cite{levin2017markov}] \label{thm:bottleneck_bound}
The mixing time is lower bounded by $t^{\textup{mix}} \geq \frac{1}{4\Phi_*}.$
\end{theorem}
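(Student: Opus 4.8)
The statement is a restatement of the classical bottleneck (conductance) lower bound, so the plan is to reproduce the standard argument. First I would fix a set $S$ with $\pi(S)\le \tfrac12$ and start the chain from $\pi_S$, the stationary distribution conditioned on $S$, i.e. $\pi_S(x)=\pi(x)/\pi(S)$ for $x\in S$ and $\pi_S(x)=0$ otherwise. For any starting distribution and any set, $\lVert \mu P^t-\pi\rVert_{TV}\ge \mu P^t(S)-\pi(S)$, so it suffices to show that mass started inside $S$ leaks out slowly: I want $\pi_S P^t(S)\ge 1-t\,\Phi(S)$, which combined with $\pi(S)\le\tfrac12$ gives $\lVert \pi_S P^t-\pi\rVert_{TV}\ge \tfrac12-t\,\Phi(S)$.

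The crux, and the step I expect to be the main obstacle, is this escape bound $\pi_S P^t(S^C)\le t\,\Phi(S)$. I would prove it by tracking the density of $\mu_t:=\pi_S P^t$ with respect to $\pi$, namely $g_t(x)=\mu_t(x)/\pi(x)$. A short computation using only stationarity of $\pi$ shows $g_{t+1}=P^\ast g_t$, where $P^\ast(x,y)=\pi(y)P(y,x)/\pi(x)$ is the time-reversed transition matrix, which is itself stochastic since $\sum_y P^\ast(x,y)=\pi(x)/\pi(x)=1$. Because $P^\ast$ is an averaging operator, $\lVert g_{t+1}\rVert_\infty\le \lVert g_t\rVert_\infty$, and hence $g_t(x)\le \lVert g_0\rVert_\infty=1/\pi(S)$ for all $t$; in particular $\mu_t(x)\le \pi(x)/\pi(S)$ on $S$. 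The one-step increase of the mass outside $S$ is bounded by the outflow $\sum_{x\in S}\mu_t(x)P(x,S^C)\le \frac{1}{\pi(S)}\sum_{x\in S}\pi(x)P(x,S^C)=Q(S,S^C)/\pi(S)=\Phi(S)$, so an induction on $t$ starting from $\mu_0(S^C)=0$ yields the claimed bound. I would emphasize that this maximum-principle argument needs only that $\pi$ is stationary and not reversibility, so it applies verbatim to both chains we construct on the lattice.

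Finally I would pass from the averaged start $\pi_S$ to a genuine worst-case initial state: by convexity of $v\mapsto \lVert v P^t-\pi\rVert_{TV}$ there is some $x_0\in S$ with $\lVert P^t(x_0,\cdot)-\pi\rVert_{TV}\ge \lVert \pi_S P^t-\pi\rVert_{TV}\ge \tfrac12-t\,\Phi(S)$. Taking $S$ to be a set achieving $\Phi_\ast$ in Equation~\ref{eq:bottleneck-def}, whenever $t<\tfrac{1}{4\Phi_\ast}$ we get $t\,\Phi(S)<\tfrac14$ and therefore $\sup_{x_0}\lVert P^t(x_0,\cdot)-\pi\rVert_{TV}>\tfrac14$. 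Since this supremal distance is nonincreasing in $t$, the first time it drops to $1/4$ is at least $\tfrac{1}{4\Phi_\ast}$, which is precisely the bound $t^{\textup{mix}}\ge \tfrac{1}{4\Phi_\ast}$.
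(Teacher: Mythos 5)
Your proof is correct, and it reproduces the standard conductance argument of Levin and Peres (2017, Theorem 7.3), which is exactly what the paper relies on: the paper states this result as an imported theorem with a citation and gives no proof of its own. Every step checks out---the density bound $\mu_t(x)\leq \pi(x)/\pi(S)$ via the stochasticity of the time reversal, the outflow bound $\mu_{t+1}(S^C)-\mu_t(S^C)\leq \Phi(S)$, and the passage to a worst-case starting state by convexity---so there is nothing to add.
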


A useful technique for deriving upper bounds of the mixing time of a reversible MC is based on the \textit{\textbf{relaxation time}}, defined as $t_{\text{rel}} = \frac{1}{\gamma_*}$. The absolute spectral gap $\gamma_{*}:=1-\lambda_{*}$ is a function of $\lambda_{*}$, the largest eigenvalue of $P$, in absolute value, that is different from 1. For a lazy chain, all the eigenvalues are positive, so $\gamma_* = \gamma=1-\lambda_{2}$, where $\lambda_{2}$ is the second largest eigenvalue of $P$. However for non-lazy chains, $\gamma^* \leq \gamma$. We use the following two theorems: 

\begin{theorem}[Theorem 12.3 of \cite{levin2017markov}] \label{thm:mix_rel_bound}
Let $P$ be the transition matrix of a reversible, irreducible Markov chain with state space $\Omega$. Then \[ t^{\textup{mix}}(\epsilon) \leq \log\left ( \frac{1}{\epsilon \min_{x\in\Omega} \pi(x)} \right) t_{\textup{rel}}. \]
\end{theorem}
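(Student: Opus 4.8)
The plan is to prove this spectral bound by the standard $L^{2}(\pi)$ method, in which reversibility is exploited to diagonalize $P$ and control total variation through a single Rayleigh-type estimate. First I would symmetrize the chain: writing $D=\mathrm{diag}\big(\sqrt{\pi(x)}\big)_{x\in\Omega}$, reversibility ($\pi(x)P(x,y)=\pi(y)P(y,x)$) is exactly the statement that $A:=DPD^{-1}$ is a symmetric real matrix. Hence $A$ admits an orthonormal basis of eigenvectors $\phi_{1},\dots,\phi_{|\Omega|}$ (standard inner product) with real eigenvalues $1=\lambda_{1}\geq\lambda_{2}\geq\cdots\geq\lambda_{|\Omega|}\geq -1$ and $\phi_{1}(x)=\sqrt{\pi(x)}$. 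Transferring back through $P^{t}=D^{-1}A^{t}D$ and setting $f_{j}(x)=\phi_{j}(x)/\sqrt{\pi(x)}$ yields the spectral expansion
\[ \frac{P^{t}(x,y)}{\pi(y)}=\sum_{j=1}^{|\Omega|}\lambda_{j}^{t}\,f_{j}(x)f_{j}(y), \]
where $f_{1}\equiv\mathbf{1}$ and $\{f_{j}\}$ is orthonormal in $L^{2}(\pi)$, i.e.\ $\sum_{y}\pi(y)f_{i}(y)f_{j}(y)=\delta_{ij}$.

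The second step records two consequences. Subtracting the $j=1$ term gives $g_{x}(y):=\frac{P^{t}(x,y)}{\pi(y)}-1=\sum_{j\geq 2}\lambda_{j}^{t}f_{j}(x)f_{j}(y)$, and orthonormality yields the Parseval identity $\|g_{x}\|_{L^{2}(\pi)}^{2}=\sum_{j\geq 2}\lambda_{j}^{2t}f_{j}(x)^{2}$. Expanding the point mass at $x$ in the $\{\phi_{j}\}$ basis gives the companion identity $\sum_{j}f_{j}(x)^{2}=1/\pi(x)$, so $\sum_{j\geq 2}f_{j}(x)^{2}\leq 1/\pi(x)$. Since $|\lambda_{j}|\leq\lambda_{*}$ for all $j\geq 2$ by the definition of $\lambda_{*}$, these combine to
\[ \|g_{x}\|_{L^{2}(\pi)}^{2}\;\leq\;\lambda_{*}^{2t}\sum_{j\geq 2}f_{j}(x)^{2}\;\leq\;\frac{\lambda_{*}^{2t}}{\pi(x)}. \]

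The third step converts this into a total variation bound. Writing $\|P^{t}(x,\cdot)-\pi\|_{TV}=\tfrac12\sum_{y}\pi(y)|g_{x}(y)|=\tfrac12\|g_{x}\|_{L^{1}(\pi)}$ and applying Cauchy--Schwarz ($\|g_{x}\|_{L^{1}(\pi)}\leq\|g_{x}\|_{L^{2}(\pi)}$, using $\|\mathbf{1}\|_{L^{2}(\pi)}=1$), I obtain
\[ \|P^{t}(x,\cdot)-\pi\|_{TV}\;\leq\;\frac{\lambda_{*}^{t}}{2\sqrt{\pi(x)}}\;\leq\;\frac{\lambda_{*}^{t}}{2\sqrt{\min_{z}\pi(z)}}. \]
Finally I use $\lambda_{*}=1-\gamma_{*}\leq e^{-\gamma_{*}}$ together with $t_{\textup{rel}}=1/\gamma_{*}$: the right-hand side drops below $\epsilon$ as soon as $e^{-\gamma_{*}t}\leq 2\epsilon\sqrt{\min_{z}\pi(z)}$, i.e.\ as soon as $t\geq t_{\textup{rel}}\log\big(1/(2\epsilon\sqrt{\min_{z}\pi(z)})\big)$. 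Because $2\sqrt{\min_{z}\pi(z)}\geq\min_{z}\pi(z)$, this threshold is at most $t_{\textup{rel}}\log\big(1/(\epsilon\min_{z}\pi(z))\big)$, and the definition of $t^{\textup{mix}}(\epsilon)$ then gives the stated inequality.

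The main obstacle is the spectral setup rather than the estimates: the whole argument rests on using reversibility to realize $P$ as a self-adjoint operator on $L^{2}(\pi)$, which is precisely what guarantees real eigenvalues, an orthonormal eigenbasis, and hence the clean Parseval identities used in steps two and three. Once that structure is in place the remaining inequalities are routine. A secondary point requiring care is the passage from the sharper constant $1/(2\sqrt{\min_{z}\pi(z)})$ produced by the $L^{2}$ method to the looser but cleaner $1/\min_{z}\pi(z)$ in the statement; this is a harmless weakening, valid because $\min_{z}\pi(z)\leq 1$.
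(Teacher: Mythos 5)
Your proof is correct. Note that the paper does not prove this statement at all---it is imported verbatim as Theorem 12.3 of \cite{levin2017markov} and used as a black box---and your argument is essentially the standard proof from that reference: symmetrize via reversibility, expand $P^t(x,y)/\pi(y)$ in the orthonormal eigenbasis of $L^2(\pi)$, bound the non-trivial spectral terms by $\lambda_*^t$ together with the identity $\sum_j f_j(x)^2 = 1/\pi(x)$, and convert to total variation; your $L^2$ route even yields the slightly sharper intermediate constant $1/\bigl(2\sqrt{\pi_{\min}}\bigr)$ before relaxing it to $1/\pi_{\min}$.
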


\begin{theorem}[Theorem 13.14 of \cite{levin2017markov}] \label{thm:bottleneck_rel_bound}
Let $P$ be a reversible transition matrix. Then the spectral gap satisfies \[ \frac{\Phi_*^2}{2}\leq \gamma \leq 2\Phi_*. \]
\end{theorem}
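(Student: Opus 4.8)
The plan is to prove the two inequalities by the standard variational/spectral route, exploiting that reversibility makes $P$ self-adjoint on $\ell^2(\pi)$ with inner product $\langle f,g\rangle_\pi = \sum_x f(x)g(x)\pi(x)$. The central object is the Dirichlet form $\mathcal{E}(f,f) = \langle (I-P)f, f\rangle_\pi = \tfrac12\sum_{x,y}(f(x)-f(y))^2 Q(x,y)$, together with the variational characterization $\gamma = \min_f \mathcal{E}(f,f)/\mathrm{Var}_\pi(f)$, where the minimum is over nonconstant $f$ and $\mathrm{Var}_\pi(f) = \|f\|_\pi^2 - (\sum_x f(x)\pi(x))^2$. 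Both bounds then reduce to inserting a well-chosen test function.

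For the upper bound $\gamma \le 2\Phi_*$, I would take a set $S$ with $\pi(S)\le \tfrac12$ attaining $\Phi_*$ and use $f = \mathds{1}_S$. A direct computation gives $\mathcal{E}(\mathds{1}_S,\mathds{1}_S) = Q(S,S^C)$ and $\mathrm{Var}_\pi(\mathds{1}_S) = \pi(S)(1-\pi(S)) \ge \pi(S)/2$, the last step using $\pi(S)\le \tfrac12$. Hence $\gamma \le \mathcal{E}(\mathds{1}_S,\mathds{1}_S)/\mathrm{Var}_\pi(\mathds{1}_S) \le 2Q(S,S^C)/\pi(S) = 2\Phi(S) = 2\Phi_*$. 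This direction is routine.

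The lower bound $\Phi_*^2/2 \le \gamma$ is the Cheeger inequality and is the substantive part. First I would reduce to a nonnegative function: starting from an eigenfunction $f$ with $(I-P)f = \gamma f$ and $\sum_x f(x)\pi(x)=0$, choose a $\pi$-median $m$ and, after possibly replacing $f$ by $-f$ so that $m\le 0$, set $g = (f-m)_+$. Then $g$ is supported on a set of $\pi$-measure at most $\tfrac12$, and from $(I-P)g\le \gamma f$ on the support together with $f\le g$ there (since $m\le 0$) one gets $\mathcal{E}(g,g)\le \gamma\|g\|_\pi^2$. Next I would sandwich $\sum_{x,y}|g(x)^2-g(y)^2|Q(x,y)$. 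A co-area argument with the level sets $S_t=\{x: g(x)^2>t\}$, each of measure $\le \tfrac12$, gives $\sum_{x,y}|g(x)^2-g(y)^2|Q(x,y) = 2\int_0^\infty Q(S_t,S_t^C)\,dt \ge 2\Phi_*\int_0^\infty \pi(S_t)\,dt = 2\Phi_*\|g\|_\pi^2$. A Cauchy--Schwarz step, factoring $g(x)^2-g(y)^2 = (g(x)-g(y))(g(x)+g(y))$ and using $(a+b)^2\le 2(a^2+b^2)$, gives $\sum_{x,y}|g(x)^2-g(y)^2|Q(x,y) \le 2\sqrt{2}\,\mathcal{E}(g,g)^{1/2}\|g\|_\pi$. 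Combining the two estimates and squaring yields $\mathcal{E}(g,g)/\|g\|_\pi^2 \ge \Phi_*^2/2$, and chaining with $\mathcal{E}(g,g)\le \gamma\|g\|_\pi^2$ closes the argument.

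I expect the reduction step to be the main obstacle: one must verify simultaneously that $g$ has support of $\pi$-measure at most $\tfrac12$ and that its Rayleigh quotient does not exceed $\gamma$, which forces the careful choice of median together with the sign flip so that the term $\gamma m\sum_x g(x)\pi(x)$ carries the correct sign. The co-area bookkeeping---matching ordered-pair sums against the symmetric $Q(x,y)=Q(y,x)$ guaranteed by reversibility---and keeping the constants consistent through the Cauchy--Schwarz step are the remaining delicate points, but they become mechanical once the reduction is in hand.
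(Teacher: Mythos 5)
The paper does not prove this statement: it is imported verbatim as Theorem 13.14 of \cite{levin2017markov}, and the paper supplies no argument of its own to compare against. Your proposal is correct and is essentially the canonical proof found in that cited reference --- the test function $\mathds{1}_S$ with $\mathrm{Var}_\pi(\mathds{1}_S)=\pi(S)\bigl(1-\pi(S)\bigr)\geq \pi(S)/2$ for the upper bound, and for the lower bound the reduction of a $\lambda_2$-eigenfunction to a nonnegative function supported on a set of $\pi$-measure at most $\tfrac{1}{2}$ with Rayleigh quotient at most $\gamma$, followed by the co-area and Cauchy--Schwarz steps (the Jerrum--Sinclair/Lawler--Sokal argument) --- with the constants coming out exactly as required.
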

\noindent Computing the upper bound of the mixing time also utilizes the bottleneck ratio. Because the relaxation time of a lazy chain is equal to $1/\gamma$, both upper bounds we derive will be for a lazy version of the chains.

\subsection{Symmetric lattice Markov chain}

The first Markov chain we define moves from $T_{X}$ to any neighbor $T_{Y}$ with equal probability as follows: 
\begin{align*}
    P(T_X, T_Y) &= \begin{cases} \frac{1}{M_N} & \text{ if } T_X \prec T_{Y} \text{ or } T_Y \prec T_{X}, \\
    1-\frac{\text{ deg}(T_{X})}{M_{N}} & \text{ if } T_{Y}=T_{X},\\
    0 & \text{ otherwise}. \end{cases}
\end{align*}
We call this Markov chain the symmetric lattice MC. It is symmetric by definition and irreducible because you can always trace a path between any two trees through their least upper bound. Therefore it has a uniform stationary distribution. The MC is also aperiodic because there exists $T_X \in \mathcal{MT}_N$ such that $P(T_X, T_X) >0$. \\

Let $S \subset \mathcal{MT}_N$, then the bottleneck ratio of the symmetric lattice MC with uniform stationary distribution is  
\begin{align*}
    \Phi(S) &= \frac{\sum_{T_X\in S, T_Y \in S^C} \pi(T_X) P(T_X,T_Y) }{\pi(S)} \\
    &= \frac{\sum_{T_{X}\in S, T_{Y} \in S^c} \mathds{1} (T_{X} \prec T_{y} \text{ or } T_{Y} \prec T_{X}) }{M_N |S|} 
\end{align*}
Now, since each element in $S$ must be connected to at least one other tree, then $\Phi(S) \geq \frac{1}{M_N}$. The minimum is then achieved when $S$ is a single tree with degree 1. One such tree is the binary tree whose edges are $(1,2)$ and $\{(i, i+2)\}_{i=1}^{N-3}$. By Theorem~\ref{thm:bottleneck_bound}, a lower bound on the mixing time of the lattice MC on $\mathcal{MT}_N$ is 
\begin{equation} \label{eq:mixing_lb}
    t^{\textup{mix}}_N \geq  \frac{M_N}{4} \sim O(2^{N/2}).
\end{equation} 

For an upper bound of the mixing time, we consider the lazy version of the symmetric lattice MC, where at each step we generate a random coin flip $X \sim \text{Bernoulli}(1/2)$. If $X=1$, then move according to the transition probability of the symmetric chain, and if $X=0$, the chain does not move. In this case, $t_{\text{rel}} = \frac{1}{\gamma}$ and the bottleneck ratio of the lazy lattice MC is $\Phi_*^l = \frac{1}{2M_N}$. By Theorem~\ref{thm:bottleneck_rel_bound}, \[ t_{\text{rel}} = \frac{1}{\gamma} \leq \frac{2}{(\Phi_*^l)^2} = 8 M_N^2 .\] Applying Theorem~\ref{thm:mix_rel_bound} gives the upper bound of the mixing time of the lazy lattice MC 
\begin{equation} \label{eq:mixing_ub}
    \tilde{t}^{\textup{mix}}_N \leq 8 M_N^2  \log(4 G(N)) \sim O\big (2^N \times N \log N\big ), 
\end{equation}
where $G(N)$ is the cardinality of $\mathcal{MT}_N$. The symmetric lattice MC mixes very slowly, and so we introduce the second Markov chain. 

\subsection{Random walk on the lattice}

Since the lattice induces a graph structure on $\mathcal{MT}_N$, we can naturally define a random walk on $\mathcal{MT}_N$: the transition probability is \[  P(T_X, T_Y) = \begin{cases} \frac{1}{\text{deg}(T_X)} & \text{ if } T_X \prec T_Y \text{ or } T_Y \prec T_X \\ 0 & \text{ otherwise} \end{cases}. \] Then, the stationary distribution is \[ \pi(T_X) =  \frac{\text{deg}(T_X)}{\sum_{T \in \mathcal{MT}_N} \text{deg}(T)} \quad \forall \; T_X \in \mathcal{MT}_N .\] 
Notice the random walk has reflecting boundaries: if the Markov chain is at a binary tree, it must transition to a tree with $N-2$ internal nodes and if the Markov chain is at the star tree, it must transition to a tree with two internal nodes. Again, we want to find bounds on the mixing time of this chain. Since there are no self-transitioning states, we expect a faster mixing time for this chain. \\ 

We use the diameter bound (Theorem~\ref{thm:diameter-bound}) to lower bound the mixing time. For two binary trees, the maximum possible least upper bound is the tree with the F-matrix $\begin{pmatrix}
    N-1 & 0 \\
    N-2 & N
\end{pmatrix}$. This implies $L \geq 2(N-3)$ because the graph distance of any binary tree to this tree is $N-3$, and
\begin{equation} \label{eq:rw_mixing_lb}
    t^{\textup{mix}}_N \geq \frac{2(N-3)}{2} \sim O(N)
\end{equation}

For the upper bound, we consider the lazy version of the chain with a fair coin as before and use the bottleneck ratio approach. The bottleneck ratio of a set $S$ for this random walk  is \[ \Phi(S) =  \frac{\sum_{T_{X}\in S, T_{Y} \in S^c} \mathds{1}(T_{X} \prec T_{Y} \text{ or }T_{Y}\prec T_{X})}{\sum_{T \in S} \text{deg}(T)}\] by Equation (7.7) of \cite{levin2017markov}. Notice that $\Phi(S)=1$ if $S$ is the single binary tree with degree 1, and this must be the minimum because the denominator cannot be smaller than the numerator. This implies $\Phi_*=1$ and the bottleneck ratio for the lazy version is $\Phi^l_*=1/2$. Combining Theorems~\ref{thm:mix_rel_bound} and \ref{thm:bottleneck_rel_bound} gives \[ \tilde{t}^{\textup{mix}}_N \leq 8 \log\left (4 \sum_{T \in \mathcal{MT}_N} \text{deg}(T) \right ). \]
We do not explicitly know the total degree of the graph,, but it can be very loosely upper bounded by $M_N G(N)$. Therefore, the mixing time for the lazy random walk is upper bounded by 
\begin{equation} \label{eq:rw_mixing_ub}
    \tilde{t}^{\textup{mix}}_N \leq 8 \log (4 M_N G(N)) \sim O(N\log N). 
\end{equation}
The mixing time of this Markov chain is significantly faster than the symmetric lattice MC. In fact, the mixing time of this Markov chain is faster than the $O(N^2)$ mixing time of Aldous' random transpositions chain on cladograms \citep{aldous2000mixing,schweinsberg2002n2}. 

\subsection{Simulation results} 
We can use the Markov chains defined on the lattice to sample from any distribution on $\mathcal{MT}_N$ via Metropolis-Hastings. Specifically, we are interested in sampling from the uniform distribution on $\mathcal{MT}_N$, and we use the random walk MC as the proposal distribution: $g(T' |T_X) = \frac{1}{\text{deg}(T_X)}$. We compare the distribution of certain tree statistics under uniform distribution of multifurcating ranked tree shapes to the Beta$(1,1)$-coalescent, also called Bolthausen-Sznitman coalescent \citep{bolthausen1998ruelle}. \\

The Beta$(1,1)$-coalescent belongs to a class of multiple merger coalescent models called the $\Lambda$-coalescent, which generates timed multifurcating ranked tree shapes  \citep{sagitov1999general,pitman1999coalescents}. The measure $\Lambda$ governs the rate at which lineages merge back in time: $\lambda_{b,k}, b\geq k$ is the rate at which any $k$-tuple of lineages merge to form a single lineage (forms a block of size $k$) when there are $b$ total lineages: \[ \lambda_{b,k} =\int_0^1 x^{k-2} (1-x)^{b-k} \Lambda(dx) .\] Here, the measure $\Lambda$ affects both the tree topology and the branch lengths because the total rate at which a coalescent event occurs is $\sum_{k=2}^b \binom{b}{k} \lambda_{b,k}$. Conditional on a coalescent event, $k$ is chosen proportional to $\binom{b}{k} \lambda_{b,k}$. One common sub-family used for inference is the Beta-coalescent, where $\Lambda=$ Beta$(2-\alpha, \alpha)$ for $\alpha \in (1,2]$. \cite{zhang2025multiple} showed that the parameter $\alpha$, which now characterizes the coalescent process, can be inferred from the block sizes only, ignoring the branch lengths. \\ 

The Beta$(1,1)$-coalescent is the $\Lambda$-coalescent with base measure $\Lambda=U[0,1]=\text{Beta}(1,1)$, and arises in connection to spin glass theory in statistical physics. It is the only $\Lambda$-coalescent where the branch lengths are independent from the topology, and can be characterized by random recursive trees \citep{Goldschmidt2015}. Therefore, it is a suitable model for comparing the topology of trees because the topology is not confounded with the branch lengths. We do not expect the Beta$(1,1)$-coalescent to generate the same distribution of tree shapes as the uniform distribution, but rather it is an interesting point of comparison to see how the tree probability spaces differ. \\

We compare the two empirical distributions of the following tree statistics:
\begin{enumerate}
    \item The number of internal nodes $K$. 
    \item Maximum block size $M$: For each tree, we can find the size of the largest coalescent event, which is a measure of tree shape. It is constrained by the number of internal nodes in the tree, for example, binary trees only have block sizes of size 2. 
    \item Average block size $A$: The average size of all coalescent events in a tree. Again, for binary trees this measure will always be 2. 
    \item Average number of $m$-tip cherries: A cherry, or 2-cherry,  is a subtree with two leaves. The expected number of cherries under the Yule model for binary tree shapes is $N/3$ with variance of $2N/45$ while under the uniform distribution on binary tree shapes, it is $\approx N/4$ with variance of $\approx N/16$ \citep{McKenzie2000}. Since we are working with multifurcating trees, we can look at cherries with more than two leaves. We will consider $m=2,3,4$ since higher numbers of $m$ are lower probability events. 
\end{enumerate}

We will simulate multifurcating trees with $N=20$ and $N=100$ tips. For each $N$, we run $N-1$ parallel chains for $200N$ iterations and pool all chains together to get our sample from the uniform distribution on $\mathcal{MT}_N$, 
(see Algorithm~\ref{alg:semi_random_sampling} in Appendix~\ref{sec:semi-random} for how we initialized the chains). The computational time for one chain on trees of 100 tips with 20,000 iterations takes around 25 minutes, with the computation time scaling linearly in $N$ and the number of iterations. The total acceptance rate of the Metropolis-Hastings sampling is approximately 90\% for $N=20$ and 97\% for $N=100$. Trace plots indicate well-mixing and average effective sample size per chain is more than 400 for $N=20$ and 700 for $N=100$. For the Beta$(1,1)$-coalescent trees, we generate $2000N$ samples directly using \texttt{rbeta\_coal()} from the R package \texttt{phylodyn} \citep{karcher2017phylodyn}.\\ 

Figure~\ref{fig:sim_tree_statistics} compares three different tree statistics between the Beta$(1,1)$-coalescent and the uniform distribution: the number of internal nodes, the maximum block size, and the average block size. The uniform distribution has a greater mass on trees with more internal nodes while the Beta$(1,1)$-coalescent being roughly bell-shaped and centered at less than $N/2$. Table~\ref{tab:summary_K_M_A_sims} presents the mean and median values of $K$ and $M$ per distribution and $N$. We see that median value of $K$ for $N=100$ under the uniform distribution is 85 while it is only 30 for the Beta$(1,1)$-coalescent. Both distributions for the maximum block size are right skewed, but the modal value of $M$ is smaller for the uniform distribution than for the Beta$(1,1)$-coalescent, which makes sense because the uniform distribution generates trees with more internal nodes. From Table~\ref{tab:summary_K_M_A_sims}, the median and mean $M$ for the Beta$(1,1)$-coalescent grows substantially with $N$, while under the uniform distribution it stays between 3-4. The last panel of Figure~\ref{fig:sim_tree_statistics} shows the distribution of the average block size, where there is not much variability due to the statistic already being an average. Overall, the Beta$(1,1)$-coalescent still has a larger average block size, which is consistent with more trees having a lesser number of internal nodes. The mean/median of the average block size is only slightly more than 2 (the average block size for binary trees) in the uniform case, while for the Beta$(1,1)$-coalescent, the mean/median are around 3 for $N=20$ and 4.5 for $N=100$. 

\begin{figure}[H]
    \centering
    \includegraphics[width=0.9\linewidth]{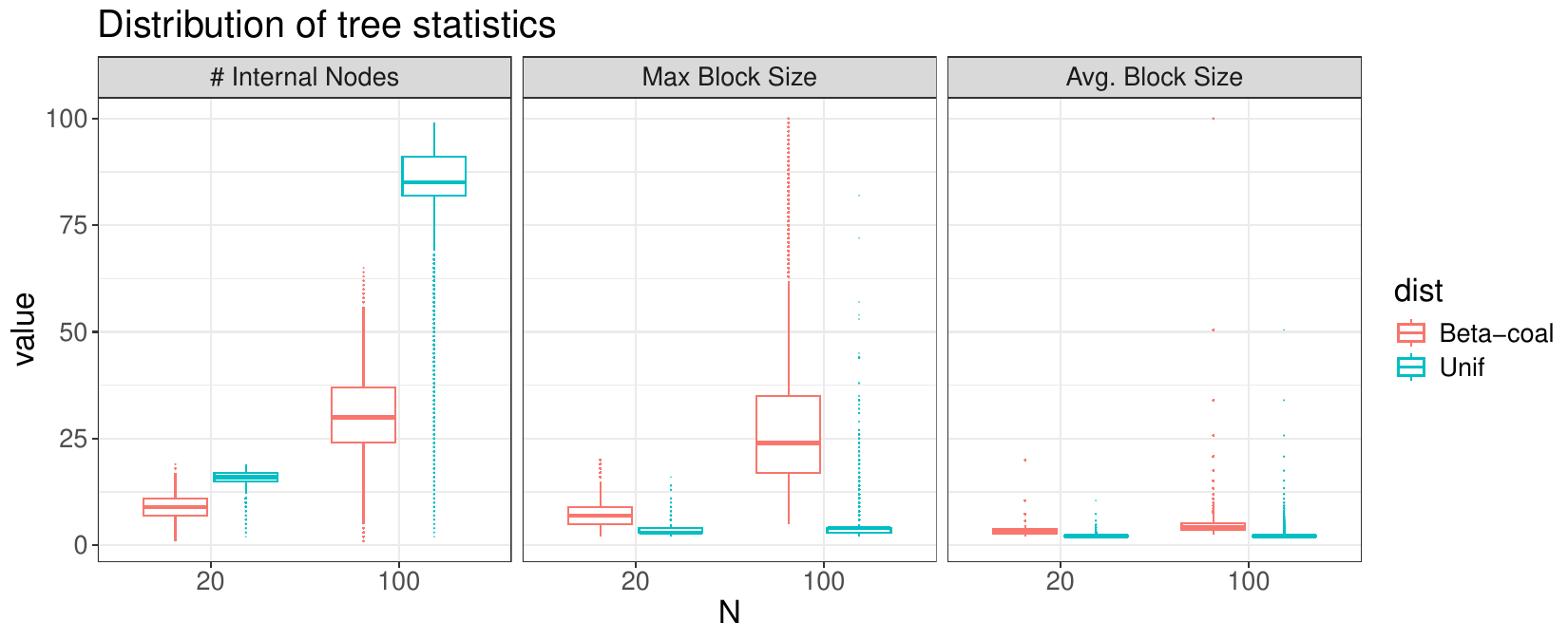}
    \caption{\textbf{Boxplots of three tree statistics in simulated $\mathcal{MT}_N$ samples:} The panels display the box plots for the number of internal nodes, the maximum block size, and the average block size of trees in each simulated sample, with red being samples from the Beta$(1,1)$-coalescent and blue being the samples drawn from the uniform distribution using Metropolis-Hastings. The Beta$(1,1)$-coalescent generates more trees with fewer number of internal nodes and larger maximum block size compared to the uniform distribution. The average block size distribution is more similar and less variable between the two samples.}
    \label{fig:sim_tree_statistics}
\end{figure}

\begin{table}[H]
\begin{tabular}{r|lrrrrrr}
                       & Distribution      & Mean $K$ & Median $K$ & Mean $M$ & Median $M$ & Mean $A$ & Median $A$\\  \hline
\multirow{2}{*}{$N=20$}  & Beta-coal & 9.09 & 9 & 7.74 & 7 & 3.45 & 3.11 \\ 
                        & Uniform      & 16.14 & 16 & 3.31 & 3 & 2.19 & 2.19 \\ \hline 
\multirow{2}{*}{$N=100$} & Beta-coal & 30.21 & 30 & 28.16 & 24 & 4.79 & 4.3 \\ 
                        & Uniform      &  86.03 & 85 & 3.92 & 4 & 2.16 & 2.16 \\ 
\end{tabular}
\caption{\textbf{Summary statistics of the internal node distribution $K$, the maximum $M$ and average block size $A$ distributions:} The means and medians of $K, M, A$ are overall very similar. In the Beta$(1,1)$-coalescent case, the values of $K$ and $M$ both grow in $N$, but there is not much increase in the average block size. On the other hand, the growth rate of the mean/median of $K$ in the uniform sample is much faster, but the mean/median of $M$ and $A$ stay essentially constant in $N$. }
\label{tab:summary_K_M_A_sims}
\end{table}

The last metric we calculate on the tree samples is the cherry spectrum. Figure~\ref{fig:sim_tree_cherry_statistics} illustrates the average number of $m$-tip cherries standardized by $N$. Overall, the uniform distribution produces more 2-tip and 3-tip cherries than the Beta$(1,1)$-coalescent, while it is the opposite for 4-tip cherries. The expected number of 2-tip cherries in the uniform sample is around $0.31N$, which is larger than $\approx 0.25 N$ in the uniform distribution on binary ranked tree shapes. The expected number of 3-tip cherries is $\approx 0.045N$ for $m=3$, and $\approx 0.008N$ for 4-tip cherries. In general, we have demonstrated that the uniform distribution on $\mathcal{MT}_N$ and the Beta$(1,1)$-coalescent are very different. 

\begin{figure}[H]
    \centering
    \includegraphics[width=0.9\linewidth]{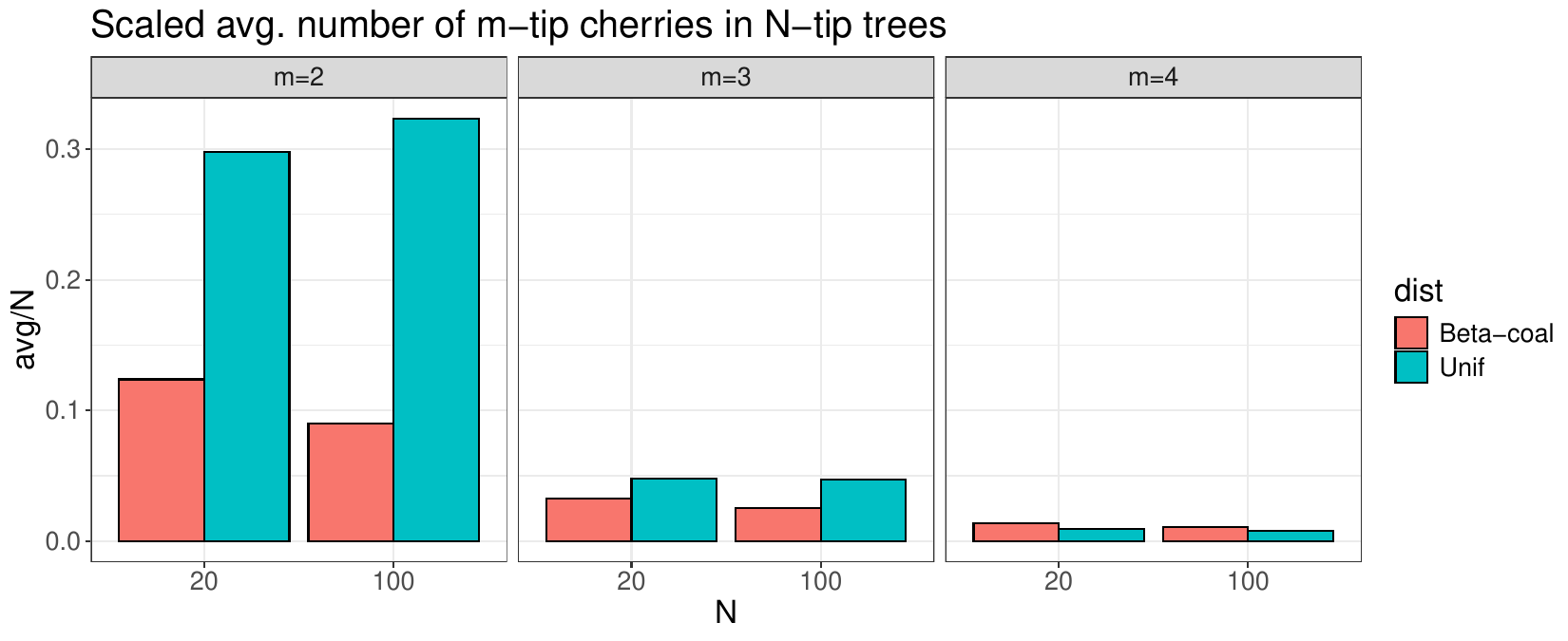}
    \caption{\textbf{Scaled average number of $m$-tip cherries:} The average number of $m$-tip cherries is computed per sample and then divided by $N$ for ease of comparison. The uniform distribution on $\mathcal{MT}_N$ produces more 2-tip and 3-tip cherries, while the Beta$(1,1)$-coalescent has more 4-tip cherries.}
    \label{fig:sim_tree_cherry_statistics}
\end{figure}

\section{Discussion} \label{sec:discussion}

We introduced two representations of multifurcating ranked tree shapes: the string representation (Proposition~\ref{prop:string-bijection}), and the F-matrix representation (Theorem~\ref{thm:fmat_multi}) which were used to enumerate $\mathcal{MT}_N$ and construct a lattice on the space, respectively. The crux of the lattice construction (Section~\ref{sec:lattice}) was to define the partial ordering on $\mathcal{MT}_N$ using the operation of collapsing edges $(e,e+1)$ connecting consecutive internal nodes. There is a nice duality between collapsing edge $(e,e+1)$ and deleting row and column $e$ of the corresponding F-matrix. This duality allows us to prove several results about the lattice such as the maximum degree (Theorem~\ref{thm:max-deg-tree}). Although, the collapse edge operation can also be performed using the string representation (see Appendix~\ref{sec:string-collapse}),  it is not as intuitive. Collapsing edges between consecutive internal nodes is also used in \cite{collienne2021computing} for defining the nearest neighbor interchange (NNI) move between ranked labeled binary trees. \\ 

While the F-matrix encoding of ranked binary trees was proposed to define tree distances, we cannot directly use the multifurcating F-matrix to define distances between multifurcating trees with the same number of tips. Multifurcating trees with different number of internal nodes will have F-matrices of different dimensions. However, we can use the lattice on $\mathcal{MT}_N$ to define a graph distance. Graph distances are often defined by minimum path length, such as in \cite{collienne2021computing} for ranked, labeled binary trees and \cite{collienne2024ranked} for ranked binary trees connected via the subtree prune and regraft (SPR) and NNI operations. However, finding minimum path lengths is usually very computationally costly. \\

An alternative is to define a distance between two multifurcating trees with $N$ tips as the minimal path distance of two trees to their least upper bound. More formally, let $T_X, T_Y \in \mathcal{MT}_N$  with $K_X, K_Y$ internal nodes respectively. Let LUB$(T_X,T_Y)$ be their least upper bound with $K_{XY}$ internal nodes. Define the lattice distance between $T_X, T_Y$ by $d_L(T_X, T_Y) = (K_X-K_{XY}) + (K_Y-K_{XY})$. By definition, $K_{XY} \leq K_X, K_Y$, so this is a symmetric non-negative function with $d_L(T_X, T_Y)=0$ if and only if $T_X=T_Y$. By definition, the triangle inequality holds: the distance from $T_X$ to $T_Y$ is the shortest path from $T_X$ to $T_Y$ without going ``backwards'' in the lattice which must be less than or equal to the shortest path from $T_X$ to $T_Z$ plus the shortest path from $T_Y$ to $T_Z$ without going ``backwards''. Hence this is a valid distance metric on $\mathcal{MT}_N$. Note that for any $T_X, T_Y$ with $K_X>K_Y$, we have $K_X-K_Y \leq d_L(T_X, T_Y) \leq K_X+K_Y-2$. If $T_X, T_Y$ have the same number of internal nodes, then $2 \leq d_L(T_X, T_Y)$. One potential drawback of this distance is that it is not granular enough: lattice distances only take on integer values between 1 and $2N-4$, and the distance between any two trees is further bounded by their number of internal nodes. However, other commonly used distances, such as the SPR distance between two rooted binary phylogenetic trees \citep{bordewich2005computational}, are also linear in $N$. The computation of the SPR distance is NP-hard, while the computation of our lattice distance can be done in less than cubic time. Assessing whether the proposed distance constitutes a meaningful metric for inference and analyses lies beyond the scope of this manuscript and is left for future research. \\ 

We use the lattice on $\mathcal{MT}_N$ to define two Markov chains: a symmetric walk and random walk on the lattice. We constructed our first Markov chain to have a uniform stationary distribution, so that we were able to prove mixing time bounds with more ease. The key to defining the transition probabilities was finding the $N$-tip tree with maximum degree. We showed this tree has a specific topology and derived an explicit formula for its degree. On the other hand, the random walk on the graph is a classic Markov chain with a much faster mixing time than the first symmetric Markov chain. Additional Markov chains can be defined on $\mathcal{MT}_N$ based on the graph structure of the lattice for other purposes. All the code developed as part of this paper can be found in the R package \texttt{phylodyn} \citep{karcher2017phylodyn}. \\ 

It is also possible to do a stochastic enumeration of the space of multifurcating ranked tree shapes using the tools provided in this manuscript, specifically using MCMC or sequential importance sampling (SIS) methods \citep{sinclair2012algorithms}. \cite{cappello2020sequential} propose an SIS method for counting constrained multi-resolution Kingman–Tajima coalescent trees. \\ 

This manuscript considered only isochronous multifurcating ranked tree shapes, however, the multifurcating F-matrix can be defined for heterochronous multifurcating ranked tree shapes, similar to the F-matrix for heterochronous binary ranked tree shapes defined in \cite{Kim2020}. Heterochronous genealogies are used to model rapidly evolving organisms, such as viruses, and it is left as future research. \\


\newpage
\bibliographystyle{abbrvnat}
\bibliography{multifurcating}

\begin{thebibliography}{68}
\providecommand{\natexlab}[1]{#1}
\providecommand{\url}[1]{\texttt{#1}}
\expandafter\ifx\csname urlstyle\endcsname\relax
  \providecommand{\doi}[1]{doi: #1}\else
  \providecommand{\doi}{doi: \begingroup \urlstyle{rm}\Url}\fi

\bibitem[Aldous(2000)]{aldous2000mixing}
D.~J. Aldous.
\newblock Mixing time for a {Markov} chain on cladograms.
\newblock \emph{Combinatorics, Probability and Computing}, 9\penalty0 (3):\penalty0 191--204, 2000.

\bibitem[{\'A}rnason et~al.(2023){\'A}rnason, Koskela, Halld{\'o}rsd{\'o}ttir, and Eldon]{arnason2023sweepstakes}
E.~{\'A}rnason, J.~Koskela, K.~Halld{\'o}rsd{\'o}ttir, and B.~Eldon.
\newblock Sweepstakes reproductive success via pervasive and recurrent selective sweeps.
\newblock \emph{Elife}, 12:\penalty0 e80781, 2023.

\bibitem[Batterson(2011)]{batterson2011competition}
J.~Batterson.
\newblock \emph{Competition math for middle school}.
\newblock Art of Problem Solving Alpine, CA, 2011.

\bibitem[Berestycki et~al.(2007)Berestycki, Berestycki, and Schweinsberg]{berestycki2007beta}
J.~Berestycki, N.~Berestycki, and J.~Schweinsberg.
\newblock Beta-coalescents and continuous stable random trees.
\newblock \emph{The Annals of Probability}, pages 1835--1887, 2007.

\bibitem[Bj{\"o}rner et~al.(1990)Bj{\"o}rner, Edelman, and Ziegler]{bjorner1990hyperplane}
A.~Bj{\"o}rner, P.~H. Edelman, and G.~M. Ziegler.
\newblock Hyperplane arrangements with a lattice of regions.
\newblock \emph{Discrete \& computational geometry}, 5\penalty0 (3):\penalty0 263--288, 1990.

\bibitem[Bolthausen and Sznitman(1998)]{bolthausen1998ruelle}
E.~Bolthausen and A.-S. Sznitman.
\newblock On ruelle's probability cascades and an abstract cavity method.
\newblock \emph{Communications in mathematical physics}, 197:\penalty0 247--276, 1998.

\bibitem[Bordewich and Semple(2005)]{bordewich2005computational}
M.~Bordewich and C.~Semple.
\newblock On the computational complexity of the rooted subtree prune and regraft distance.
\newblock \emph{Annals of combinatorics}, 8:\penalty0 409--423, 2005.

\bibitem[Byeon et~al.(2019)Byeon, Oh, Kim, Yun, Kang, Park, and Lee]{byeon2019origin}
S.~Y. Byeon, H.~Oh, S.~Kim, S.~H. Yun, J.~H. Kang, S.~R. Park, and H.~J. Lee.
\newblock The origin and population genetic structure of the ‘golden tide’ seaweeds, \textit{Sargassum horneri}, in {Korean} waters.
\newblock \emph{Scientific Reports}, 9\penalty0 (1):\penalty0 7757, 2019.

\bibitem[Callan(2009)]{callan2009note}
D.~Callan.
\newblock A note on downup permutations and increasing 0-1-2 trees.
\newblock \emph{preprint}, 2009.

\bibitem[Cappello and Palacios(2020)]{cappello2020sequential}
L.~Cappello and J.~A. Palacios.
\newblock Sequential importance sampling for multiresolution {Kingman-Tajima} coalescent counting.
\newblock \emph{The Annals of Applied Statistics}, 14\penalty0 (2):\penalty0 727, 2020.

\bibitem[Cayley(1856)]{cayley1856note}
A.~Cayley.
\newblock Note sur une formule pour la reversion des séries.
\newblock \emph{Journal für die reine und angewandte Mathematik}, 1856\penalty0 (52):\penalty0 276--284, 1856.
\newblock \doi{doi:10.1515/crll.1856.52.276}.

\bibitem[Chen et~al.(2009)Chen, Ford, and Winkel]{chen2009}
B.~Chen, D.~Ford, and M.~Winkel.
\newblock {A new family of Markov branching trees: the alpha-gamma model}.
\newblock \emph{Electronic Journal of Probability}, 14:\penalty0 400 -- 430, 2009.

\bibitem[Collienne and Gavryushkin(2021)]{collienne2021computing}
L.~Collienne and A.~Gavryushkin.
\newblock Computing nearest neighbour interchange distances between ranked phylogenetic trees.
\newblock \emph{Journal of Mathematical Biology}, 82\penalty0 (1):\penalty0 8, 2021.

\bibitem[Collienne et~al.(2024)Collienne, Whidden, and Gavryushkin]{collienne2024ranked}
L.~Collienne, C.~Whidden, and A.~Gavryushkin.
\newblock Ranked subtree prune and regraft.
\newblock \emph{Bulletin of Mathematical Biology}, 86\penalty0 (3):\penalty0 24, 2024.

\bibitem[Copeland(2010)]{copeland2010mathemagical}
T.~Copeland.
\newblock Mathemagical forests, 2010.
\newblock URL \url{https://tcjpn.wordpress.com/wp-content/uploads/2008/06/mathemagicalforestswp.pdf}.

\bibitem[Der et~al.(2012)Der, Epstein, and Plotkin]{der2012dynamics}
R.~Der, C.~Epstein, and J.~B. Plotkin.
\newblock Dynamics of neutral and selected alleles when the offspring distribution is skewed.
\newblock \emph{Genetics}, 191\penalty0 (4):\penalty0 1331--1344, 2012.

\bibitem[Diaconis and Holmes(2002)]{diaconis2002}
P.~Diaconis and S.~Holmes.
\newblock {Random Walks on Trees and Matchings}.
\newblock \emph{Electronic Journal of Probability}, 7:\penalty0 1 -- 17, 2002.

\bibitem[Diaconis and Holmes(1998)]{diaconis1998matchings}
P.~W. Diaconis and S.~P. Holmes.
\newblock Matchings and phylogenetic trees.
\newblock \emph{Proceedings of the National Academy of Sciences}, 95\penalty0 (25):\penalty0 14600--14602, 1998.

\bibitem[Dickey and Rosenberg(2025)]{dickey2025labelled}
E.~H. Dickey and N.~A. Rosenberg.
\newblock Labelled histories with multifurcation and simultaneity.
\newblock \emph{Philosophical Transactions B}, 380\penalty0 (1919):\penalty0 20230307, 2025.

\bibitem[Disanto and Rosenberg(2015)]{disanto2015coalescent}
F.~Disanto and N.~A. Rosenberg.
\newblock Coalescent histories for lodgepole species trees.
\newblock \emph{Journal of Computational Biology}, 22\penalty0 (10):\penalty0 918--929, 2015.

\bibitem[Donaghey(1975)]{donaghey1975alternating}
R.~Donaghey.
\newblock Alternating permutations and binary increasing trees.
\newblock \emph{Journal of Combinatorial Theory, Series A}, 18\penalty0 (2):\penalty0 141--148, 1975.

\bibitem[Edwards(1970)]{edwards1970estimation}
A.~W. Edwards.
\newblock Estimation of the branch points of a branching diffusion process.
\newblock \emph{Journal of the Royal Statistical Society: Series B (Methodological)}, 32\penalty0 (2):\penalty0 155--164, 1970.

\bibitem[Eldon(2020)]{eldon2020evolutionary}
B.~Eldon.
\newblock Evolutionary genomics of high fecundity.
\newblock \emph{Annual Review of Genetics}, 54:\penalty0 213--236, 2020.

\bibitem[Eldon and Stephan(2018)]{eldon2018evolution}
B.~Eldon and W.~Stephan.
\newblock Evolution of highly fecund haploid populations.
\newblock \emph{Theoretical Population Biology}, 119:\penalty0 48--56, 2018.

\bibitem[Eldon and Stephan(2023)]{eldon2023sweepstakes}
B.~Eldon and W.~Stephan.
\newblock Sweepstakes reproduction facilitates rapid adaptation in highly fecund populations.
\newblock \emph{Molecular Ecology}, 2023.

\bibitem[Eldon and Wakeley(2006)]{eldon2006coalescent}
B.~Eldon and J.~Wakeley.
\newblock Coalescent processes when the distribution of offspring number among individuals is highly skewed.
\newblock \emph{Genetics}, 172\penalty0 (4):\penalty0 2621--2633, 2006.

\bibitem[Felsenstein(1978)]{felsenstein1978number}
J.~Felsenstein.
\newblock The number of evolutionary trees.
\newblock \emph{Systematic zoology}, 27\penalty0 (1):\penalty0 27--33, 1978.

\bibitem[Friedman et~al.(2001)Friedman, Ninio, Pe'er, and Pupko]{friedman2001structural}
N.~Friedman, M.~Ninio, I.~Pe'er, and T.~Pupko.
\newblock A structural {EM} algorithm for phylogenetic inference.
\newblock In \emph{Proceedings of the fifth annual international conference on Computational biology}, pages 132--140, 2001.

\bibitem[Goldschmidt and Martin(2005)]{Goldschmidt2015}
C.~Goldschmidt and J.~Martin.
\newblock {Random Recursive Trees and the {Bolthausen-Sznitman} Coalesent}.
\newblock \emph{Electronic Journal of Probability}, 10\penalty0 (none):\penalty0 718 -- 745, 2005.

\bibitem[Haas et~al.(2008)Haas, Miermont, Pitman, and Winkel]{haas2008}
B.~Haas, G.~Miermont, J.~Pitman, and M.~Winkel.
\newblock {Continuum tree asymptotics of discrete fragmentations and applications to phylogenetic models}.
\newblock \emph{The Annals of Probability}, 36\penalty0 (5):\penalty0 1790 -- 1837, 2008.

\bibitem[Helekal et~al.(2025)Helekal, Koskela, and Didelot]{helekal2025inference}
D.~Helekal, J.~Koskela, and X.~Didelot.
\newblock Inference of multiple mergers while dating a pathogen phylogeny.
\newblock \emph{Systematic Biology}, page syaf003, 2025.

\bibitem[Hoscheit and Pybus(2019)]{hoscheit2019multifurcating}
P.~Hoscheit and O.~G. Pybus.
\newblock The multifurcating skyline plot.
\newblock \emph{Virus Evolution}, 5\penalty0 (2):\penalty0 vez031, 2019.

\bibitem[Karcher et~al.(2017)Karcher, Palacios, Lan, and Minin]{karcher2017phylodyn}
M.~D. Karcher, J.~A. Palacios, S.~Lan, and V.~N. Minin.
\newblock phylodyn: an {R} package for phylodynamic simulation and inference.
\newblock \emph{Molecular Ecology Resources}, 17\penalty0 (1):\penalty0 96--100, 2017.

\bibitem[Kim et~al.(2020)Kim, Rosenberg, and Palacios]{Kim2020}
J.~Kim, N.~A. Rosenberg, and J.~A. Palacios.
\newblock Distance metrics for ranked evolutionary trees.
\newblock \emph{Proceedings of the National Academy of Sciences}, 117\penalty0 (46):\penalty0 28876--28886, 2020.

\bibitem[Kingman(1982)]{Kingman1982}
J.~F.~C. Kingman.
\newblock The coalescent.
\newblock \emph{Stochastic Processes and their Applications}, 13\penalty0 (3):\penalty0 235--248, 1982.

\bibitem[Korfmann et~al.(2024)Korfmann, Sellinger, Freund, Fumagalli, and Tellier]{korfmann2024simultaneous}
K.~Korfmann, T.~P.~P. Sellinger, F.~Freund, M.~Fumagalli, and A.~Tellier.
\newblock Simultaneous inference of past demography and selection from the ancestral recombination graph under the beta coalescent.
\newblock \emph{Peer Community Journal}, 4, 2024.

\bibitem[Kuznetsov et~al.(1994)Kuznetsov, Pak, and Postnikov]{kuznetsov1994increasing}
A.~G. Kuznetsov, I.~M. Pak, and A.~E. Postnikov.
\newblock Increasing trees and alternating permutations.
\newblock \emph{Russian Mathematical Surveys}, 49\penalty0 (6):\penalty0 79, 1994.

\bibitem[Levin and Peres(2017)]{levin2017markov}
D.~A. Levin and Y.~Peres.
\newblock \emph{Markov chains and mixing times}, volume 107.
\newblock American Mathematical Soc., 2017.
\newblock URL \url{https://www.stat.berkeley.edu/~aldous/260-FMIE/Levin-Peres-Wilmer.pdf}.

\bibitem[Lewis et~al.(2005)Lewis, Holder, and Holsinger]{lewis2005polytomies}
P.~O. Lewis, M.~T. Holder, and K.~E. Holsinger.
\newblock Polytomies and {Bayesian} phylogenetic inference.
\newblock \emph{Systematic biology}, 54\penalty0 (2):\penalty0 241--253, 2005.

\bibitem[Li et~al.(2017)Li, Grassly, and Fraser]{li2017quantifying}
L.~M. Li, N.~C. Grassly, and C.~Fraser.
\newblock Quantifying transmission heterogeneity using both pathogen phylogenies and incidence time series.
\newblock \emph{Molecular Biology and Evolution}, 34\penalty0 (11):\penalty0 2982--2995, 2017.

\bibitem[McKenzie and Steel(2000)]{McKenzie2000}
A.~McKenzie and M.~Steel.
\newblock Distributions of cherries for two models of trees.
\newblock \emph{Mathematical Biosciences}, 164\penalty0 (1):\penalty0 81--92, 2000.

\bibitem[Menardo et~al.(2021)Menardo, Gagneux, and Freund]{menardo2021multiple}
F.~Menardo, S.~Gagneux, and F.~Freund.
\newblock Multiple merger genealogies in outbreaks of \textit{Mycobacterium tuberculosis}.
\newblock \emph{Molecular Biology and Evolution}, 38\penalty0 (1):\penalty0 290--306, 2021.

\bibitem[Mo and Siepel(2023)]{mo2023domain}
Z.~Mo and A.~Siepel.
\newblock Domain-adaptive neural networks improve supervised machine learning based on simulated population genetic data.
\newblock \emph{PLoS Genetics}, 19\penalty0 (11):\penalty0 e1011032, 2023.

\bibitem[Murtagh(1984)]{Murtagh1984}
F.~Murtagh.
\newblock Counting dendrograms.
\newblock \emph{Discrete Applied Mathematics}, 7\penalty0 (2):\penalty0 191--199, 1984.

\bibitem[Nation(1998)]{nation1998notes}
J.~Nation.
\newblock Notes on lattice theory.
\newblock \emph{Cambridge studies in advanced mathematics}, 60, 1998.

\bibitem[Niwa et~al.(2016)Niwa, Nashida, Yanagimoto, and editor: W.~Stewart~Grant]{niwa2016reproductive}
H.-S. Niwa, K.~Nashida, T.~Yanagimoto, and H.~editor: W.~Stewart~Grant.
\newblock Reproductive skew in {Japanese} sardine inferred from {DNA} sequences.
\newblock \emph{ICES Journal of Marine Science}, 73\penalty0 (9):\penalty0 2181--2189, 2016.

\bibitem[Palacios et~al.(2019)Palacios, V{\'e}ber, Cappello, Wang, Wakeley, and Ramachandran]{palacios2019bayesian}
J.~A. Palacios, A.~V{\'e}ber, L.~Cappello, Z.~Wang, J.~Wakeley, and S.~Ramachandran.
\newblock Bayesian estimation of population size changes by sampling {Tajima’s} trees.
\newblock \emph{Genetics}, 213\penalty0 (3):\penalty0 967--986, 2019.

\bibitem[Palacios et~al.(2022)Palacios, Bhaskar, Disanto, and Rosenberg]{palacios2022enumeration}
J.~A. Palacios, A.~Bhaskar, F.~Disanto, and N.~A. Rosenberg.
\newblock Enumeration of binary trees compatible with a perfect phylogeny.
\newblock \emph{Journal of Mathematical Biology}, 84\penalty0 (6):\penalty0 54, 2022.

\bibitem[Pitman(1999)]{pitman1999coalescents}
J.~Pitman.
\newblock Coalescents with multiple collisions.
\newblock \emph{Annals of Probability}, pages 1870--1902, 1999.

\bibitem[Rasmussen et~al.(2014)Rasmussen, Hubisz, Gronau, and Siepel]{rasmussen2014genome}
M.~D. Rasmussen, M.~J. Hubisz, I.~Gronau, and A.~Siepel.
\newblock Genome-wide inference of ancestral recombination graphs.
\newblock \emph{PLoS genetics}, 10\penalty0 (5):\penalty0 e1004342, 2014.

\bibitem[Rosenberg et~al.(2025)Rosenberg, Stadler, and Steel]{rosenberg2025mathematical}
N.~A. Rosenberg, T.~Stadler, and M.~Steel.
\newblock " a mathematical theory of evolution": phylogenetic models dating back 100 years, 2025.

\bibitem[Sagitov(1999)]{sagitov1999general}
S.~Sagitov.
\newblock The general coalescent with asynchronous mergers of ancestral lines.
\newblock \emph{Journal of Applied Probability}, 36\penalty0 (4):\penalty0 1116--1125, 1999.

\bibitem[Sainudiin et~al.(2015)Sainudiin, Stadler, and Veber]{Sainudiin2015}
R.~Sainudiin, T.~Stadler, and A.~Veber.
\newblock Finding the best resolution for the {Kingman–Tajima} coalescent: theory and applications.
\newblock \emph{Journal of Mathematical Biology}, 70:\penalty0 1207--1247, 2015.

\bibitem[Samyak and Palacios(2024)]{samyak2024statistical}
R.~Samyak and J.~A. Palacios.
\newblock Statistical summaries of unlabelled evolutionary trees.
\newblock \emph{Biometrika}, 111\penalty0 (1):\penalty0 171--193, 2024.

\bibitem[Sargsyan and Wakeley(2008)]{sargsyan2008coalescent}
O.~Sargsyan and J.~Wakeley.
\newblock A coalescent process with simultaneous multiple mergers for approximating the gene genealogies of many marine organisms.
\newblock \emph{Theoretical Population Biology}, 74\penalty0 (1):\penalty0 104--114, 2008.

\bibitem[Schiffels and Wang(2020)]{schiffels2020msmc}
S.~Schiffels and K.~Wang.
\newblock {MSMC and MSMC2: the multiple sequentially Markovian coalescent}.
\newblock In \emph{Statistical population genomics}, pages 147--165. Humana, 2020.

\bibitem[Schr{\"o}der(1870)]{schroder1870vier}
E.~Schr{\"o}der.
\newblock Vier combinatorische probleme.
\newblock \emph{Z. Math. Phys}, 15:\penalty0 361--376, 1870.

\bibitem[Schweinsberg(2002)]{schweinsberg2002n2}
J.~Schweinsberg.
\newblock An {O(n2)} bound for the relaxation time of a {Markov} chain on cladograms.
\newblock \emph{Random Structures \& Algorithms}, 20\penalty0 (1):\penalty0 59--70, 2002.

\bibitem[Semple and Steel(2003)]{semple2003phylogenetics}
C.~Semple and M.~Steel.
\newblock \emph{Phylogenetics}, volume~24.
\newblock Oxford University Press, 2003.

\bibitem[Simper and Palacios(2022)]{simper2022adjacent}
M.~Simper and J.~A. Palacios.
\newblock An adjacent-swap {M}arkov chain on coalescent trees.
\newblock \emph{Journal of Applied Probability}, 59\penalty0 (4):\penalty0 1243--1260, 2022.

\bibitem[Sinclair(2012)]{sinclair2012algorithms}
A.~Sinclair.
\newblock \emph{Algorithms for random generation and counting: a {Markov} chain approach}.
\newblock Springer Science \& Business Media, 2012.

\bibitem[Sloane et~al.(2003)]{sloane2003line}
N.~J. Sloane et~al.
\newblock {The On-Line Encyclopedia of Integer Sequences}, 2003.

\bibitem[S{\o}rensen(2024)]{sorensen2024down}
F.~S{\o}rensen.
\newblock A down-up chain with persistent labels on multifurcating trees.
\newblock \emph{Random Structures \& Algorithms}, 64\penalty0 (2):\penalty0 354--400, 2024.

\bibitem[Spade et~al.(2014)Spade, Herbei, and Kubatko]{spade2014note}
D.~A. Spade, R.~Herbei, and L.~S. Kubatko.
\newblock A note on the relaxation time of two markov chains on rooted phylogenetic tree spaces.
\newblock \emph{Statistics \& Probability Letters}, 84:\penalty0 247--252, 2014.

\bibitem[Steel(2016)]{steel2016phylogeny}
M.~Steel.
\newblock \emph{Phylogeny: discrete and random processes in evolution}.
\newblock SIAM, 2016.

\bibitem[Wirtz(2023)]{wirtz2022enumeration}
J.~Wirtz.
\newblock On the enumeration of leaf-labelled increasing trees with arbitrary node-degree.
\newblock \emph{The Art of Discrete and Applied Mathematics}, 7, 2023.

\bibitem[Zhang et~al.(2021)Zhang, Dinh, and Matsen~IV]{zhang2021nonbifurcating}
C.~Zhang, V.~Dinh, and F.~A. Matsen~IV.
\newblock Nonbifurcating phylogenetic tree inference via the adaptive {LASSO}.
\newblock \emph{Journal of the American Statistical Association}, 116\penalty0 (534):\penalty0 858--873, 2021.

\bibitem[Zhang and Palacios(2025)]{zhang2025multiple}
J.~Zhang and J.~A. Palacios.
\newblock Multiple merger coalescent inference of effective population size.
\newblock \emph{Philosophical Transactions of the Royal Society B: Biological Sciences}, 380\penalty0 (1919), 2025.
\newblock \doi{10.1098/rstb.2023.0306}.

\end{thebibliography}

\newpage
\section{Appendix}

\subsection{Proofs of results in Section~\ref{sec:string}} \label{subsec:string-proofs}

\begin{proof}[Proof of Corollary~\ref{eq:k0_k1_pairs_cardinality}]
The base cases of $K=2$ and $K=3$ give $\mathcal{K}_2 = \{(1,1)\}, \mathcal{K}_2 = \{(1,2), (2,1)\}$ and $|\mathcal{K}_K| = 1=\lfloor (K-1)^2/4\rfloor + 1 $ holds for both. For $K\geq 4$, we split into the cases of $K=2c$ and $K=2c+1$. Consider the even case of $K=2c$ first. The set of valid $(K_0, K_1)$ pairs is $(1,2c-1), (2c-1, 0)$ and  \[ K_0(t) = k_0  \in \{2,...,2c-2\} \; \Rightarrow \; K_1(t) \in \Big \{ \max\{0, 2c-2k_0+1\}, ..., 2c-1-k_0 \Big \} \] 
We can split these cases into 
\begin{align*}
    & K_0(t) = k_0 \in \{ 2,..., c\} \; \Rightarrow \; K_1(t) \in \Big \{ 2c-2k_0+1, ..., 2c-1-k_0 \Big \} \\
    & K_0(t) = k_0 \in \{c +1,..., 2c-2 \} \; \Rightarrow \; K_1(t) \in \Big \{ 0, ..., 2c-1-k_0 \Big \} 
\end{align*}
This implies 
\begin{align*}
    |\mathcal{K}_K| &= 2 + \sum_{k_0=2}^c \Big( 2c-1-k_0 - (2c-2k_0+1) +1 \Big ) + \sum_{k_0=c+1}^{2c-2} (2c-1-k_0+1) \\
    &= 2 + \sum_{k_0=2}^c (k_0-1) + \sum_{k_0=c+1}^{2c-2} (2c-k_0) \\
    &= 2 + \frac{c(c-1)}{2} + \frac{(c+1)(c-2)}{2} \\
    &= c^2-c+1 
\end{align*} 

\noindent Next, let's consider the odd case of $K=2c+1$. The set of valid $(K_0, K_1)$ pairs is $(1,2c), (2c, 0)$ and  \[ K_0(t) = k_0  \in \{2,...,2c-1\} \; \Rightarrow \; K_1(t) \in \Big \{ \max\{0, 2c-2k_0+2\}, ..., 2c-k_0 \Big \}  \] 
We can split these cases into 
\begin{align*}
    & K_0(t) = k_0 \in \{ 2,..., c+1\} \; \Rightarrow \; K_1(t) \in \Big \{ 2c-2k_0+2, ..., 2c-k_0 \Big \} \\
    & K_0(t) = k_0 \in \{c +2,..., 2c-1 \} \; \Rightarrow \; K_1(t) \in \Big \{ 0, ..., 2c-k_0 \Big \} 
\end{align*}
This implies 
\begin{align*}
    |\mathcal{K}_K| &= 2 + \sum_{k_0=2}^{c+1} \Big( 2c-k_0 - (2c-2k_0+2) +1 \Big ) + \sum_{k_0=c+2}^{2c-1} (2c-k_0+1) \\
    &= 2 + \sum_{k_0=2}^{c+1} (k_0-1) + \sum_{k_0=c+2}^{2c-2} (2c-k_0+1) \\
    &= 2 + \frac{c(c+1)}{2} + \frac{(c+2)(c-3)}{2} \\
    &= c^2+1 
\end{align*} 
For both $K$ even and $K$ odd, we get $|\mathcal{K}_K| = \lfloor (K-1)^2/4\rfloor + 1$. 
\end{proof}

\begin{proof}[Proof of Corollary~\ref{eq:k0_cardinality}]
The Eulerian numbers (OEIS Sequence A008292) are given by the recursion $E(n,k)=(n-k+1) E(n-1,k-1)+k E(n-1,k)$ for $1\leq k \leq n$ with base case $E(n,1)=E(n,n) = 1$ for all $n$. Let $B_K(k_0)$ be the number of length $K$ $t$ vectors with $K_0(t)=k_0$. Similar to the proof of Proposition~\ref{prop:string-counting-recursion}, we again consider a length $K-1$ $t$ vector and construct $\tilde{t} = (t, t_K)$ of length $K$, where $t_K \in \{1,2,...,K-1\}$. Let $K_0(\tilde{t}) = k_0$. There are two cases: 
\begin{itemize}
    \item If $t_K$ is equal to an element that does appear in $t$, then $K_0(t) = k_0 -1$ because the new element $K$ does not appear in $\tilde{t}$ while nothing else changes. There are $K-1-(k_0-1)=K-k_0$ choices for $t_K$ and $B_{K-1}(k_0-1)$ such length $K-1$ $t$ vectors. 
    \item If $t_K$ is equal to an element that does not appear in $t$, then we still have $K_0(t)= k_0$ because the new element $K$ does not appear but another element now does appear. There are $k_0$ choices for $t_K$ and $B_{K-1}(k_0)$ such length $K-1$ $t$ vectors.
\end{itemize}
Putting everything together, the recursion we get is \[ B_K(k_0) = (K-k_0) B_{K-1}(k_0-1) + k_0 B_{K-1}(k_0)\] The base cases are $B_K(1)= B_K(K-1)= 1$ by the proof of Proposition~\ref{prop:valid-k0-k1-pairs}. This recursion is equivalent to the recursion for the Eulerian numbers with $B_K(k_0)= E(K-1, k_0)$ which concludes the proof.  
\end{proof}

\subsection{Polynomial expressions of $G(N,K)$ for $K\leq 8$} \label{subsec:polynomial_GNK}

We used Theorem~\ref{thm:enumeration-result-final} to find explicit polynomial expressions of $G(N,K)$ up to $K=8$. 
\begin{align*}
    G(N,2) &= N-2 \\
    G(N,3) &= N^2 - 5N +6\\
    & = (N-2)(N-3) \\ 
    G(N, 4) &= N^3-\frac{19}{2}N^2+\frac{59}{2}N-30  \\
    & = (N-3)(N-4) \;\; \frac{2N-5}{2} \\
    G(N, 5) &= N^4 -\frac{31}{2} N^3+\frac{179}{2}N^2-229N+220 \\
    & = ( N-4)(N-5) \;\; \frac{2N^2-13N+22}{2} \\ 
    G(N, 6) &= N^5 - 23N^4 + \frac{634}{3}N^3 + \frac{1945}{2}N^2 + \frac{13489}{6}N - 2095 \\
    & = (N-5)(N-6) \;\; \frac{6 N^3 - 72 N^2 + 296 N - 419 }{6}  \\ 
    G(N,7) &= N^6 - 32 N^5 + \frac{3417}{8}N^4 -  \frac{36601}{12} N^3 + \frac{98519}{8} N^2 - \frac{320483}{12}N + 24346\\
    &= (N-6)(N-7)\;\; \frac{24 N^4 - 456 N^3 + 3315 N^2 - 10955 N + 13912 }{24} \\
    G(N,8) &= N^7 -\frac{85N^6}{2} + \frac{93091}{120} N^5 - \frac{94739}{12} N^4 + \frac{116277}{24} N^3 - \frac{2154193}{12} N^2 + \frac{3722867}{10} N - 333676 \\
    &=  (N-7)(N-8) \;\; \frac{120 N^5 - 3300 N^4 + 36871 N^3 - 209525 N^2 + 606234 N - 715020}{120} 
\end{align*}
We hypothesize the same pattern to hold for larger $K$: 
\begin{align*}
    G(N,K) &= N^{K-1} - \frac{a(K)}{2} N^{K-2} + \Theta(N^{K-3}) \\
    &=  (N-K)(N-K+1) \times \left (N^{K-3} - \frac{b(K-2)}{2} N^{K-4} + \Theta(N^{K-5}) \right ) 
\end{align*}
where $a(K)=  3K(K-1)/2 + 1$ are the centered triangular numbers (OEIS sequence A005448) and $b(K) = K(3K+7)/2 = b(K-1)+3K+2$ (OEIS sequence A140090).  

\subsection{Growth rate of $G(N)$} 
\label{subsec:cardinality_growth_rate}

In Section~\ref{sec:string}, we found that the order of $G(N)$ was $\log G(N) \sim O(N \log N)$. We use our enumeration formula to confirm the growth rate of $G(N)$ by calculating $G(N)$ up to $N=20$ and then fitting three different models to $\log G(N)$: linear, quadratic, and log-linear. We find that the log-linear model performs the best, as expected (see Figure~\ref{fig:cardinality_growth}). Coefficient estimates and residuals for the three models are shown in Table~\ref{table:cardinality_growth_regression}, where we see the RMSE is the lowest for the log-linear model and we can conclude that the order of $\log G(N)$ is greater than $O(N)$. 

\begin{table}[H]
\begin{center}
\begin{tabular}{l c c c}
\hline
& Model 1 (Linear) & Model 2 (Quadratic) & Model 3 (Log-linear) \\
\hline
(Intercept) & $-7.60 \; (0.69)^{***}$ & $-2.64 \; (0.21)^{***}$ & $1.57 \; (0.04)^{***}$  \\
$N$           & $1.94 \; (0.05)^{***}$  & $0.85 \; (0.04)^{***}$  & $-1.35 \; (0.01)^{***}$ \\
$N^2$         &                         & $0.05 \; (0.00)^{***}$  &                         \\
$N \log N$       &                         &                         & $0.97 \; (0.00)^{***}$  \\
\hline
R$^2$       & $0.99$                  & $1.00$                  & $1.00$                  \\
Adj. R$^2$  & $0.99$                  & $1.00$                  & $1.00$                  \\
Num. obs.   & $18$                    & $18$                    & $18$                    \\
F statistic & $1242.46$               & $29805.25$              & $2466110.84$            \\
RMSE        & $1.21$                  & $0.18$                  & $\mathbf{0.02}$                  \\
\hline
\multicolumn{4}{l}{\scriptsize{$^{***}p<0.001$; $^{**}p<0.01$; $^{*}p<0.05$}}
\end{tabular}
\caption{Linear, quadratic, and log-linear model results for the growth of $\log G(N)$. The log-linear model has the smallest residual standard error, while the linear model is clearly not a good fit, supporting our conclusion that $\log G(N) \sim O(N\log N)$.}
\label{table:cardinality_growth_regression}
\end{center}
\end{table}

\begin{figure}[H]
    \centering
    \includegraphics[width=0.8\linewidth]{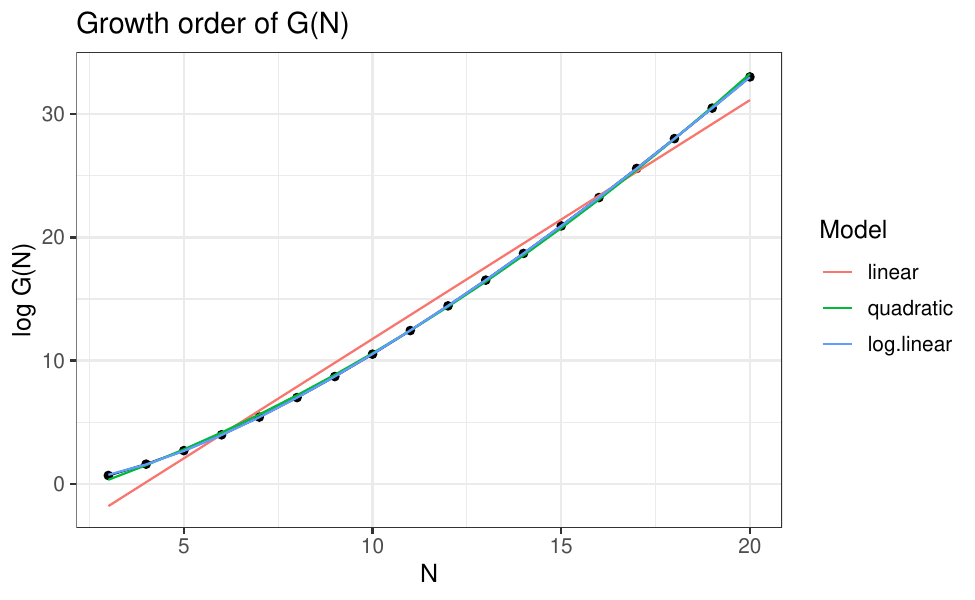}
    \caption{\textbf{Plot of $N$ versus $\log G(N)$:} The black points are the true counts, while the colored curves are the fitted linear, quadratic, and log-linear models. The log-linear model is the best fit.}
    \label{fig:cardinality_growth}
\end{figure}

\subsection{Proofs of results in Section~\ref{sec:lattice}} \label{subsec:lattice-proofs}

\begin{proof}[Proof of Theorem~\ref{thm:fmat_multi}] 
We follow a proof similar to that of Theorem 1 in \cite{samyak2024statistical}, which proves this result for F-matrices of ranked binary tree shapes. First, we label the internal nodes by 1 to $K$, where the root is 1 and the next internal node is 2, etc. Let $u_i$ be the time at which node $i$ furcates. \\

\noindent We define an intermediate matrix $D$ that is also a $K\times K$ lower triangular matrix with nonnegative integer entries. The entry $D_{i,j}$ denotes the number of direct descendants of node $j$ that have not furcated until time $u_i$. Then $D$ must satisfy the following conditions. 
\begin{enumerate}[label={D\arabic*.}]
    \item $\sum_{j=1}^K D_{K, j} = N$: There are a total of $N$ tips.
    \item $D_{i,i}\geq 2$ for $i=1,...,K$: There are $K$ furcations. 
    \item For all $j\geq i$, $D_{i-1, j} -1\leq D_{i, j} \leq D_{i-1, j}$: At most one branch can furcate at a time, so $D_{i-1,j}, D_{i,j}$ differ by at most 1. 
    \item Exactly 1 branch furcates at time $u_i, i=1,...,K$, that is, for all $i=1,...,K$, \[ \sum_{j=1}^{i-1} \mathds{1}( D_{i-1, j} - D_{i,j} ) = 1 \] 
\end{enumerate}
This matrix $K\times K$ matrix $D$ is uniquely determined for any ranked tree shape $T_{N,K}$. We reconstruct $T_{N,K}$ given a $K\times K$ matrix $D$ by starting at the root, and successively furcating branches into $D_{i,i}$ branches according to the column of $D$ where $D_{i-1, j} - D_{i,j} = 1$. The only difference between this and the binary case is that $D_{i,i}=2$ for all $i$, meaning branches only bifurcate. Hence the space $\mathcal{T}_{N,K}$ is in bijection with $\mathcal{D}_{N,K}$, the space of D-matrices that obey the above conditions. It remains to show $\mathcal{D}_{N,K}$ is in bijection with $\mathcal{F}_{N,K}$. Figure~\ref{fig:D-F-matrix-ex} shows a tree with 8 tips and 5 internal nodes and its corresponding D and F matrices. \\

\begin{figure}[h]
    \centering
    \includegraphics[width=0.55\linewidth]{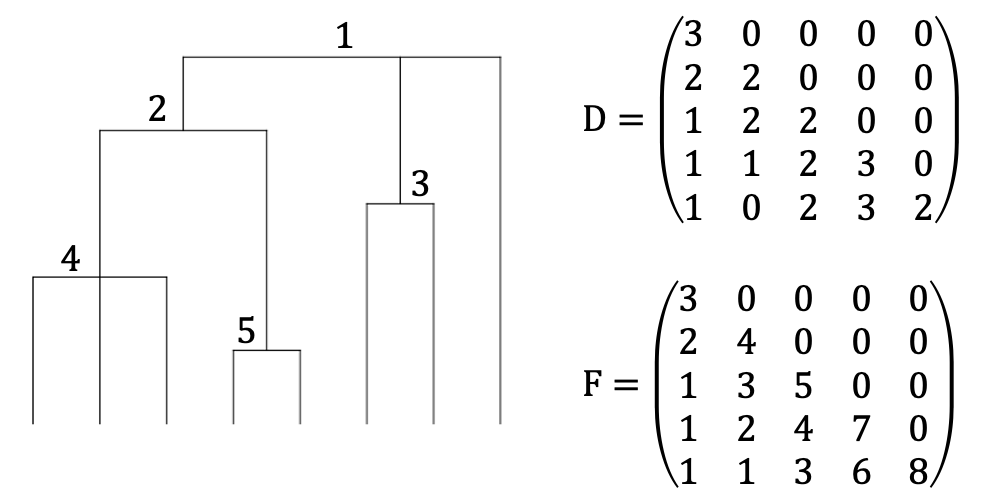}
    \caption{A ranked multifurcating tree shape with $N=8, K=5$ and its corresponding D and F matrices. The ranking of the 5 internal nodes from the root downwards are marked.}
    \label{fig:D-F-matrix-ex}
\end{figure}

\noindent Define the function $\phi: \mathcal{D}_{N,K} \to \mathcal{F}_{N,K} $ by $F_{i,j} = \sum_{n=1}^j D_{i,n}$. It is clear $\phi$ is injective. Define the inverse mapping $\phi^{-1}: \mathcal{F}_{N,K} \to \mathcal{D}_{N,K} $ by $D_{i,j} = F_{i,j} - F_{i,j-1}$ with $F_{i,0}=0$. To show bijection, we need to show that $\phi(\mathcal{D}_{N,K}) \subset \mathcal{F}_{N,K}$ and $\phi^{-1}(\mathcal{F}_{N,K}) \subset \mathcal{D}_{N,K}$. Notice that by definition of $D$, we must have \[ \sum_{n=1}^{i-1} D_{i,n} = \left (\sum_{n=1}^{i-1} D_{i-1, n} \right ) -1 \]

\noindent First consider the range of $\phi$. We have $F_{1,1} =D_{1,1} > 1$ and \[ F_{i,i} = \sum_{n=1}^i D_{i,n} = D_{i,i} + \sum_{n=1}^{i-1} D_{i,n} =D_{i,i} + \sum_{n=1}^{i-1} D_{i-1,n} -1 = F_{i-1, i-1} + D_{i,i}-1 > F_{i-1,i-1} \] In addition, \[ F_{i+1, i} = \sum_{n=1}^i D_{i+1, n} = \sum_{n=1}^i D_{i,n}-1 = F_{i,i}-1 \] and $F_{K,K} = \sum_{n=1}^K D_{K, n} = N$. This proves $\phi(D)$ satisfies F1. Next, using condition D2 with $j=1$ shows condition F2 because $F_{i,1}= D_{i,1}$ for all $i$. Lastly, F3a follows from $D \geq 0$ and monotonically decreasing columns of $D$ and F3b follows from D3. F3c is equivalent to 
\begin{align*}
    0\leq (F_{i-1, j} - F_{i,j}) - (F_{i-1, j-1}- F_{i,j-1}) \leq 1 &\Leftrightarrow 0\leq  \sum_{n=1}^j D_{i-1,n}- \sum_{n=1}^j D_{i-1,n}- \sum_{n=1}^{j-1} D_{i-1,n} +  \sum_{n=1}^{j-1} D_{i,n} \leq 1 \\
    &\Leftrightarrow 0\leq D_{i-1, j} - D_{i,j} \leq 1
\end{align*}
by D2 and D3. Hence $\phi(\mathcal{D}_{N,K}) \subset \mathcal{F}_{N,K}$. \\

\noindent To show the reverse condition, D1 follows from F1 because \[ D_{i,i} = F_{i,i} - F_{i,i-1} = F_{i,i}-F_{i-1,i-1} + 1 > 1 \] The arguments above show F3b implies D3 and F3c implies D2. This completes the proof: We have shown the bijection between $\mathcal{T}_{N_K}$ and $\mathcal{F}_{N,K}$.
\end{proof}

\begin{proof}[Proof of Lemma~\ref{lemma:fmat_remove}]
We first prove that removing the first row and first column of an F-matrix yields a valid F-matrix. Denote the matrix without row and column 1 by $F^{(-1, -1)}$ which has dimension $(K-1) \times (K-1)$. Then $F^{(-1,-1)}_{i,j} = F_{i+1, j+1}$ for all $1\leq j <i \leq K-1$. It is straightforward to check $F^{(-1,-1)}$ satisfies conditions F1 and F3. Applying F3b on the original F-matrix implies F2 holds for the new F-matrix. Therefore, $F^{(-1,-1)}$ is a valid F-matrix. \\ 

\noindent New, we consider removing column and row $e\geq 2$. We introduce notation for $F^{(-e, -e)}\equiv F'$ (for sake of notation), a $(K-1) \times (K-1)$ matrix. This matrix is still lower triangular. Then
\begin{itemize}
    \item $F'_{m,n} = F_{m,n}$ for $1 \leq n \leq m \leq e-1$.
    \item $F'_{m,n} = F_{m+1, n}$ for $e \leq m \leq K-1, 1 \leq n \leq e-1$. In particular, $F'_{e, n} = F_{e+1, n} = F_{e, n}$ for $1 \leq n \leq e-1$. 
    \item $F'_{m,n} = F_{m+1, n+1}$ for $e \leq n <m \leq K-1$. 
\end{itemize}
We check that $F'$ satisfies conditions F1-F3. For F1, all the diagonal elements will still be non-decreasing and $F'_{K-1, K-1}=N$. We check the subdiagonal condition: 
\begin{align*}
    m< e-1: \qquad & F'_{m,m} = F_{m,m} \stackrel{*}{=} F_{m+1, m} + 1 = F'_{m+1,m}\\
    m = e-1: \qquad & F'_{e-1, e-1} = F'_{e-1, e-1} \stackrel{*}{=} F_{e, e-1} - 1 \stackrel{**}{=}F_{e+1, e-1} -1 = F'_{e, e-1}-1    \\
    m> e-1: \qquad & F'_{m,m} = F_{m+1,m+1} \stackrel{*}{=} F_{m+2, m+1} + 1 = F'_{m+1,m} 
\end{align*}
The equalities marked by $*$ hold by F2 on the original F-matrix and the equality marked by $**$ holds by Equation~\ref{eq:f-edge-condition}. F2 holds because the only change to the first column is $F'_{e, 1} = F_{e+1, 1} = F_{e, 1}$. For F3a and F3b, the conditions all hold because we are just shifting indices at most one over. For F3c, the condition becomes showing $0\leq (F'_{m-1, n} - F'_{m,n}) - (F'_{m-1, n-1}- F'_{m,n-1}) \leq 1$ for all $m=4,...,K-1, n=2,...,m-2$. By Equation~\ref{eq:f-edge-condition}, we will have 
\begin{align*}
     &(F'_{m-1, n} - F'_{m,n}) - (F'_{m-1, n-1}- F'_{m,n-1})\\
     \qquad \qquad &= \begin{cases}
    (F_{m-1, n} - F_{m,n}) - (F_{m-1, n-1}- F_{m,n-1}) & \text{ if } 2\leq n < m-2, m \leq e \\
    (F_{m, n+1} - F_{m+1,n+1}) - (F_{m, n}- F_{m+1,n}) & \text{ if } e\leq n < m-2, m \leq K-1 \\
    (F_{m, n} - F_{m+1,n}) - (F_{m, n-1}- F_{m+1,n-1}) & \text{ if } 2 \leq n\leq e, 4 \leq m \leq K-1 
    \end{cases} 
\end{align*}
By F3c on the original F-matrix, F3c will hold for $F^{(-e, -e)}$. Therefore, removing row and column $e$ of an F-matrix corresponding to a tree with edge $(e, e+1)$ is still a valid F-matrix. 
\end{proof}

\begin{proof}[Proof of Theorem~\ref{thm:tree-deg}]
The forwards degree comes directly from the condition in Equation~\ref{eq:f-edge-condition}. \\ 

\noindent To find the backwards degree, let $U(k_i, l_i)$ denote the number of ways a single collapse edge operation can result in an internal node with ranking $i$ that subtends $l_i$ leaves and $k_i$ internal nodes. This is equal to choosing $s \in \{2, 3,..., k_i+l_i-1\}$ elements out of the $k_i+l_i$ total elements to subtend the new internal node where the split is formed. We can think of this problem as splitting $l_i$ unlabeled objects and $k_i$ labeled objects into two distinctly labeled bins (call them Bin 1 and Bin 2), where Bin 1 must have at least 2 elements and no bin can be empty. For a given split size $s$, we then choose $j \in \{0,..., \min(s, l_i)\}$ of them from the unlabeled objects and the remaining $s-j$ objects from the labeled objects, provided there are enough labeled objects. If there are no labeled objects, then the number of possible splits is $l_i-2$, because the original node $i+1$ has to subtend at least two leaves. Hence, we have 
\begin{equation} \label{eq:backwards_degree}
    \text{deg}^-(T_{N,K}) = \sum_{i=1}^K U(k_i, l_i) = \sum_{i=1}^K \left [ \sum_{s=2}^{k_i+l_i-1}  \sum_{j=0}^s  \binom{k_i}{s-j} \mathds{1}(j \leq l_i ) \right ]  + (l_i-2) \mathds{1}(k_i=0) . 
\end{equation}
where by convention, $\binom{0}{a}=0, \binom{a}{b}=0$ for $a < b$ or $b <0$. Note that if at internal node $i$ there is only a bifurcation, then $l_i+ k_i=2$ and $U(k_i, l_i)=0$ as expected. \\

\noindent Now, it remains to show for any $l\geq 0, k \geq 0$, $U(k,l) = (l+1)2^k -k-3 + \mathds{1}(l=0)$. We can separate the claim into three cases: 
\begin{align*}
    l>0, k>0: \qquad U(k,l) & = \sum_{s=2}^{k+l-1} \sum_{j=0}^s  \binom{k}{s-j} \mathds{1}(j \leq l ) = (l+1) 2^k -k-3 \\ 
    l=0, k\geq 2: \qquad U(k,0) &= \sum_{s=2}^{k-1} \binom{k}{s} = 2^k-k-2 \\
    l\geq 2, k=0: \qquad U(0,l) &= l-2
\end{align*}
The cases where $l=0, k\geq 2$ and $k=0, l\geq 2$ are clear by definition, so we focus on the case $l, k>0$. We will prove by induction on $l$. First, we check the base cases of $l=1$ and $l=2$. 
\begin{align*}
    U(k, 1) &= \sum_{s=2}^{k} \sum_{j=0}^s \binom{k}{s-j} \mathds{1}(j \leq 1) \\
    &= \sum_{s=2}^{k} \binom{k}{s} + \binom{k}{s-1} \\
    &= 2^{k+1}-k-3
\end{align*}
For $l=2$, we have 
\begin{align*}
    U(k, 2) &= \sum_{s=2}^{k+1} \sum_{j=0}^s \binom{k}{s-j} \mathds{1}(j \leq 2) \\
    &= \sum_{s=2}^{k+1} \binom{k}{s} + \binom{k}{s-1} + \binom{k}{s-2} \\ 
    &= 2^{k+1} + 2^{k} -k-3 \\
    &= 3 \times 2^k -k-3
\end{align*}
This shows the base cases hold. For the induction step, assume the result holds for $U(k,l)$. We will show $U(k,l+1) - U(k,l) = 2^k$ for any fixed $k$. 
\begin{align*}
    U(k,l+1) - U(k,l) &= \sum_{s=2}^{k+l} \sum_{j=0}^s  \binom{k}{s-j} \mathds{1}(j \leq l+1 ) - \left [ \sum_{s=2}^{k+l-1} \sum_{j=0}^s  \binom{k}{s-j} \mathds{1}(j \leq l )\right ] \\
    &= \left [ \sum_{s=2}^{k+l-1} \sum_{j=0}^s  \binom{k}{s-j} \mathds{1}(j = l+1 )\right ] + \left [\sum_{j=0}^{k+l}  \binom{k}{k+l-j} \mathds{1}(j \leq l+1) \right ] \\
    &=\left [ \sum_{s=2}^{k+l-1} \binom{k}{s-l-1} \right ] +\left [ \sum_{j=0}^{l+1}  \binom{k}{k+l-j} \right ]\\
    &= \left [\sum_{s=l+1}^{k+l-1} \binom{k}{s-l-1}\right ] + \left [\sum_{j=l}^{l+1}  \binom{k}{k+l-j} \right ]\\
    &= \left [ \sum_{s=0}^{k-2} \binom{k}{s}\right ] + \left [\binom{k}{k} + \binom{k}{k+1} \right ] \\
    &= 2^k
\end{align*}
Therefore, if $U(k,l) = (l+1)2^k -k-3$ for a fixed $l$, then we have shown $U(k,l+1) = (l+2) 2^k-k-3$ completing the inductive proof. Note also that in the bifurcation cases of $U(1,1), U(2,0), U(0,2)$, all of them are equal to 0, so that case is subsumed among the more general formulation. Combining everything in Equation~\ref{eq:backwards_degree} gives \[ \text{deg}^-(T_{N,K}) = \sum_{i=1}^K \Big ((l_i+1)2^{k_i} -k_i-3 + \mathds{1}(l_i=0) \Big ). \qedhere\]
\end{proof}

\begin{lemma} \label{lemma:Ukl_comparisons}
The maximum of $U(k,l)$ for $k+l=C$ fixed is obtained at $l=0, k=C$. The maximum of $U(k,l)$ for $2k+l=C$ fixed is obtained at $l=2$ and $l=3$ for $C$ even and odd respectively. 
\end{lemma}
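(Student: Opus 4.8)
The plan is to reduce both maximizations to a one-dimensional discrete optimization, using the closed form $U(k,l) = (l+1)2^{k} - k - 3 + \mathds{1}(l=0)$ established in Theorem~\ref{thm:tree-deg}, and then to read off the maximizer from the sign of a first difference in the free variable $k$. In each case the substitution for the constraint ($l = C-k$ or $l = C-2k$) turns $U$ into a function of $k$ alone, and the monotonicity is controlled by an elementary computation.

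For the first statement, fix $k+l=C$, substitute $l=C-k$, and set $g(k)=(C-k+1)2^{k}-k-3$ for the range $0\le k\le C-1$ where $l>0$, treating $U(C,0)=2^{C}-C-2$ separately because of the indicator. A direct computation gives $g(k+1)-g(k)=2^{k}(C-k-1)-1$, which is nonnegative for all $0\le k\le C-2$ since then $C-k-1\ge 1$ and $2^{k}\ge 1$. Hence $g$ is non-decreasing on $\{0,\dots,C-1\}$ and attains its maximum at $k=C-1$, where $g(C-1)=2^{C}-C-2=U(C,0)$. Thus $U(C,0)$ dominates every $U(k,C-k)$, so the maximum is attained at $(k,l)=(C,0)$ (and also at $(C-1,1)$, the indicator bump exactly offsetting the unit drop in $g$).

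For the second statement, fix $2k+l=C$, substitute $l=C-2k$, and set $\tilde h(k)=(C-2k+1)2^{k}-k-3$, which equals $U(k,C-2k)$ except when $l=0$. The first difference is $\tilde h(k+1)-\tilde h(k)=2^{k}(C-2k-3)-1$. Writing $m=C-2k$ for the leaf count at step $k$, this is $2^{k}(m-3)-1$, which is $\ge 0$ exactly when $m\ge 4$ and strictly negative when $m\le 3$. Consequently $\tilde h$ increases weakly while the leaf count stays at least $4$ and strictly decreases once it drops to $3$ or below. For odd $C$ the leaf values run $\dots,5,3,1$, so the peak sits at $l=3$; for even $C$ they run $\dots,4,2,0$, so the peak of $\tilde h$ sits at $l=2$, and it only remains to rule out the indicator bump at $l=0$. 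The plan here is to note that the single step from $l=2$ to $l=0$ lowers $\tilde h$ by $2^{(C-2)/2}+1$, so after adding $\mathds{1}(l=0)=1$ we get $U(\cdot,0)=U(\cdot,2)-2^{(C-2)/2}<U(\cdot,2)$, confirming $l=2$ as the maximizer.

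The two first-difference computations are routine; the only place demanding care is the bookkeeping around the $\mathds{1}(l=0)$ term, which produces the tie in the first statement and must be explicitly dominated in the even-$C$ case of the second. I would present each monotonicity argument in the form ``non-decreasing up to the stated leaf count, then strictly decreasing,'' which pins down the maximizer as soon as the indicator correction is accounted for.
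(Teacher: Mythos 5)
Your proof is correct and takes essentially the same approach as the paper's: your first differences $g(k+1)-g(k)=2^{k}(C-k-1)-1$ and $\tilde h(k+1)-\tilde h(k)=2^{k}(C-2k-3)-1$ are exactly the paper's adjacent-pair comparisons $U(k,l)-U(k-1,l+1)$ and $U(k,l)-U(k+1,l-2)$ along the constraint, with the only organizational difference being that you strip out the $\mathds{1}(l=0)$ term and reinstate it at the end rather than carrying it inside $U$. Both arguments also record the same tie at $l=0$ versus $l=1$ in the first case, so nothing substantive is missing.
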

\begin{proof}[Proof of Lemma~\ref{lemma:Ukl_comparisons}]
\textbf{$k+l=C$ fixed:} we compare $U(k,l)$ to $U(k-1, l+1)$: 
\begin{align*}
    U(k,l) - U(k-1, l+1) &= (l+1) 2^{k} -k-3 +\mathds{1}(l =0) - (l+2) 2^{k-1} + (k-1)+3  \\
    &= l \times 2^{k-1} -1 + \mathds{1}(l =0) 
\end{align*}
We see $U(k,l) - U(k-1, l+1) >0$ for $l \geq 1$ and they are equal when $l=0$. This means the maximum of $U(k,l)$ for $k+l=C$ fixed is obtained when $l=0$. \\

\noindent \textbf{$2k+l=C$ fixed:} we compare $U(k,l)$ to $U(k+1, l-2)$: 
\begin{align*}
    U(k,l) - U(k+1, l-2) &= (l+1) 2^k -k-3 - (l-1) 2^{k+1} +k+4 -\mathds{1}(l-2 =0) \\
    &= 3 \times 2^k - l \times 2^k +1 -\mathds{1}(l-2 =0)
\end{align*}
Therefore, for $l=2,3$, we have $U(k,l) > U(k+1, l-2)$ while for $l>3$, $U(k,l) < U(k+1, l-2)$. Hence, for $2k+l=C$ fixed, the maximum of $U(k,l)$ is obtained when $l=2$ for $C$ even and $l=3$ for $C$ odd. 
\end{proof}

\begin{proof}[Proof of Theorem~\ref{thm:max-deg-tree}]
To solve the optimization problem (\ref{eq:M_opt_problem}), we will first find the tree with maximum backwards degree with $K$ internal nodes. We then find the value of $K$ for which the maximum backwards degree occurs. We separate into two cases: $K \leq \lfloor N/2 \rfloor +1$ and $K> \lfloor N/2 \rfloor +1$. The first case of $K \leq \lfloor N/2 \rfloor +1$ is when all non-root internal nodes could descend directly from the root, while the second case of $K> \lfloor N/2 \rfloor +1$ does not allow for this possibility. \\

\noindent For a fixed $K> \lfloor N/2 \rfloor +1$, the largest multifurcating event possible in $T_{N,K}$ is of size $N-K+1$, where one internal node will have $N-K+1$ descendants, while the other internal nodes will have exactly two descendants. The backwards degree is then equal to $U(k,l)$ for some $k,l$ with $k+l=N-K+1$, and the maximum occurs at $l=0$ by Lemma~\ref{lemma:Ukl_comparisons}. Therefore, for fixed $K> \lfloor N/2 \rfloor +1$, the maximum backwards degree is equal to $U(N-K+1, 0) = 2^{N-K+1}-N+K-3$. \\ 

\noindent For $K\leq \lfloor N/2 \rfloor +1$, the vertices of the parameter space of the $k_i$'s are $k_i=0$ for all but one $i\in \{1,...,K\}$. Since we are solving an integer programming problem, the solution occurs at vertex of the parameter space. Clearly, we want as large of a $k_i$ value as possible, and the constraint $k_i \leq K-i$ means that the optimal solution occurs when $k_2=k_3=...=k_K=0$ and $k_1$ is as large as possible. For $K \leq \lfloor N/2 \rfloor +1$, the maximum value of $k_1 = K-1$ can be obtained, where the optimal solution is \[  k_1=K-1, k_2=k_3=...=k_K=0, \;\; l_1= N-2K+2, l_2=l_3=...=l_K=2\] and the maximum backwards degree is equal to $U(K-1, N-2K+1)$. \\

\noindent Finally, we must find take the maximum over $K$: \[ \max \big \{ U(K-1, N-2K+1): K=1,..., \lfloor N/2 \rfloor +1 \big\} \bigcup \big\{U(N-K+1, 0): K=  \lfloor N/2 \rfloor +2 ,..., N\big \}. \] For $K\leq \lfloor N/2 \rfloor +1$, we use Lemma~\ref{lemma:Ukl_comparisons} to find the maximum is obtained when $N-2K+2$ is equal to 2 or 3, depending on whether $N$ is even or odd. If $N$ is even, then $k_1= N/2$ and $l_1 =2$ is the optimal solution with maximum of $2^{N/2} + 2^{(N/2)-1} -(N/2)-2$. If $N$ is odd, then $k_1 = (N-1)/2$ and $l_1=3$ is the optimal solution with maximum of $2^{(N+1)/2} - (N+1)/2 -2$. Both these values are greater than $\max_{K> \lfloor N/2 \rfloor +1} 2^{N-K+1}-N+K-3$. Hence, the maximum backwards degree tree is characterized by 
\begin{align*}
    & k_1=\left \lfloor \frac{N}{2}\right \rfloor, \;\; \; k_2=k_3=...=k_K=0, \\
    &l_1 = 2 + N \text{ mod } 2, \;\; \; l_2=l_3=...=l_K=2 . 
\end{align*}
This is precisely the tree characterized by conditions 1-3 and the backwards degree of this tree is equal to $\big (3+ (N \text{ mod } 2) \big) 2^{\left \lfloor \frac{N}{2} \right \rfloor} - \left \lfloor \frac{N}{2} \right \rfloor - 3$ using Theorem~\ref{thm:tree-deg}. 
\end{proof}

\subsection{Examples of algorithms to find the least upper bound} \label{subsec:lub-algs}

We present an example of Algorithm~\ref{alg:tree_binary_lub} on two binary trees with $N=8$. 
\begin{enumerate}
    \item We start with \[ F(T_X) = \begin{pmatrix}
  2 & 0 & 0 & 0 & 0 & 0 & 0 \\ 
  1 & 3 & 0 & 0 & 0 & 0 & 0 \\ 
  0 & 2 & 4 & 0 & 0 & 0 & 0 \\ 
  0 & 2 & 3 & 5 & 0 & 0 & 0 \\ 
  0 & 1 & 2 & 4 & 6 & 0 & 0 \\ 
  0 & 1 & 2 & 4 & 5 & 7 & 0 \\ 
  0 & 1 & 2 & 3 & 4 & 6 & 8 \\ 
   \end{pmatrix}, \qquad F(T_Y) = \begin{pmatrix}
  2 & 0 & 0 & 0 & 0 & 0 & 0 \\ 
  1 & 3 & 0 & 0 & 0 & 0 & 0 \\ 
  0 & 2 & 4 & 0 & 0 & 0 & 0 \\ 
  0 & 2 & 3 & 5 & 0 & 0 & 0 \\ 
  0 & 1 & 2 & 4 & 6 & 0 & 0 \\ 
  0 & 1 & 2 & 4 & 5 & 7 & 0 \\ 
  0 & 1 & 2 & 4 & 5 & 6 & 8 \\ 
   \end{pmatrix}. \] The largest shared submatrix is \[ S_F(T_X, T_Y) = \begin{pmatrix}
  2 & 0 & 0 & 0 & 0 \\ 
  1 & 3 & 0 & 0 & 0 \\ 
  0 & 2 & 4 & 0 & 0 \\ 
  0 & 1 & 2 & 7 & 0 \\ 
  0 & 1 & 2 & 6 & 8 \\ 
   \end{pmatrix}.\]
   \item Column 3 violates condition F1, so our new matrix is $S_F(T_X, T_Y) \gets \begin{pmatrix}
  2 & 0 & 0 & 0 \\ 
  1 & 3 & 0 & 0 \\ 
  0 & 1 & 7 & 0 \\ 
  0 & 1 & 6 & 8 \\ 
   \end{pmatrix}$.
   \item Column 2 now violates condition F1, so our new matrix is $S_F(T_X, T_Y) \gets \begin{pmatrix}
  2 & 0 & 0 \\ 
  0 & 7 & 0 \\ 
  0 & 6 & 8 \\ 
   \end{pmatrix}$.
   \item Column 1 now violates condition F1, so our new matrix is $S_F(T_X, T_Y) \gets \begin{pmatrix}
  7 & 0 \\ 
  6 & 8 \\ 
   \end{pmatrix}$.
   \item No columns violate the conditions, so the final matrix we return is $S_F(T_X, T_Y)= \begin{pmatrix}
  7 & 0 \\ 
  6 & 8 \\ 
   \end{pmatrix}$. This is the unique LUB of our original $T_X, T_Y$. 
\end{enumerate}

Next, let us consider two multifurcating trees with 7 tips in Figure~\ref{fig:lub-ex} and use Algorithm~\ref{alg:tree_lub}. We also show the LUB via its graphical path on the lattice in Figure~\ref{fig:lub-on-lattice}.

\begin{figure}[H]
    \centering
    \includegraphics[width=0.8\linewidth]{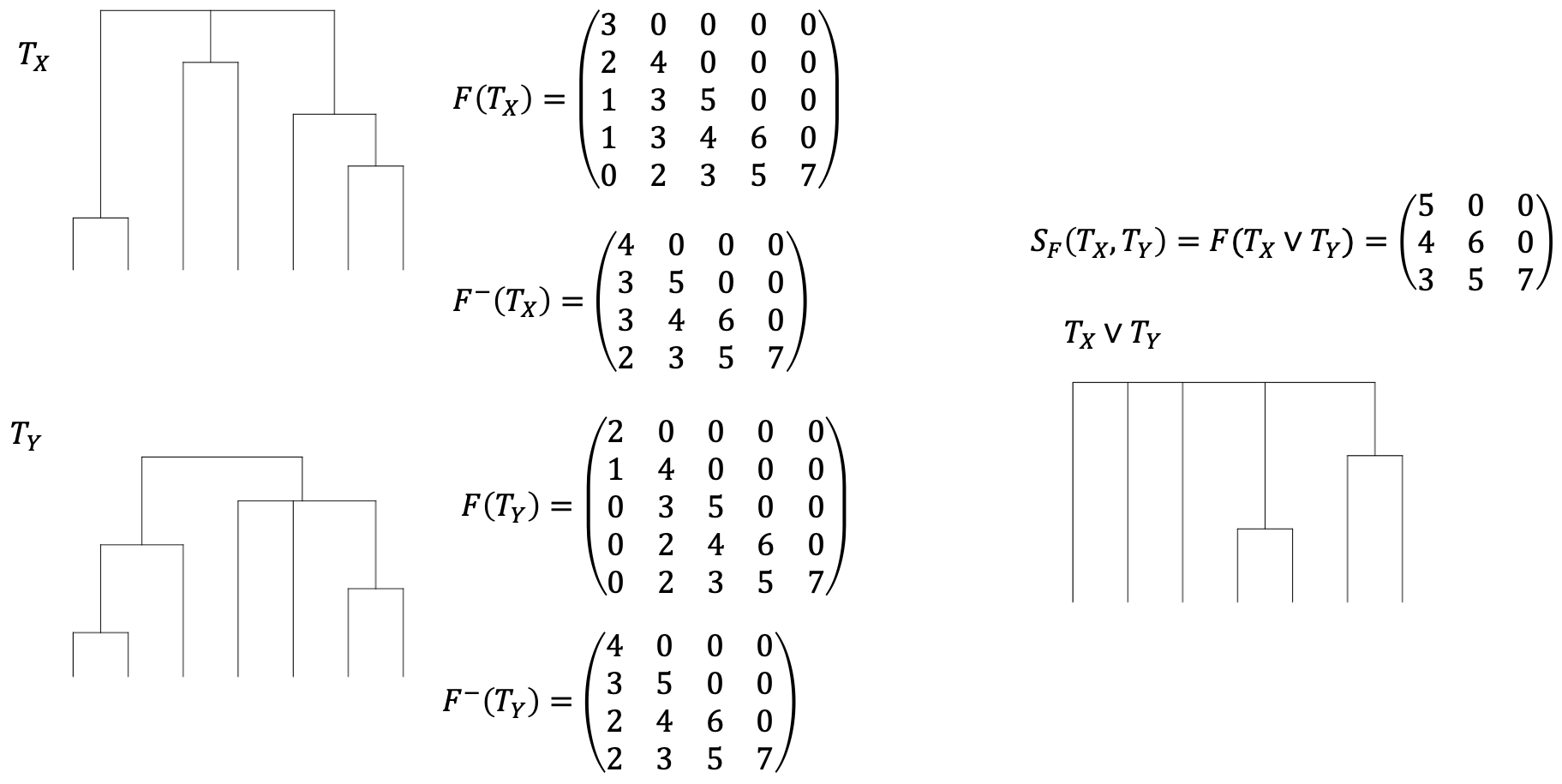}
    \caption{An illustration of how to find the least upper bound of two trees with 7 tips using Algorithm~\ref{alg:tree_lub}. Here, $S_F(T_X, T_Y)$ is an F-matrix, so the algorithm did not need to enter Step 4.} 
    \label{fig:lub-ex}
\end{figure}

\begin{figure}[H]
    \centering
    \includegraphics[width=0.75\linewidth]{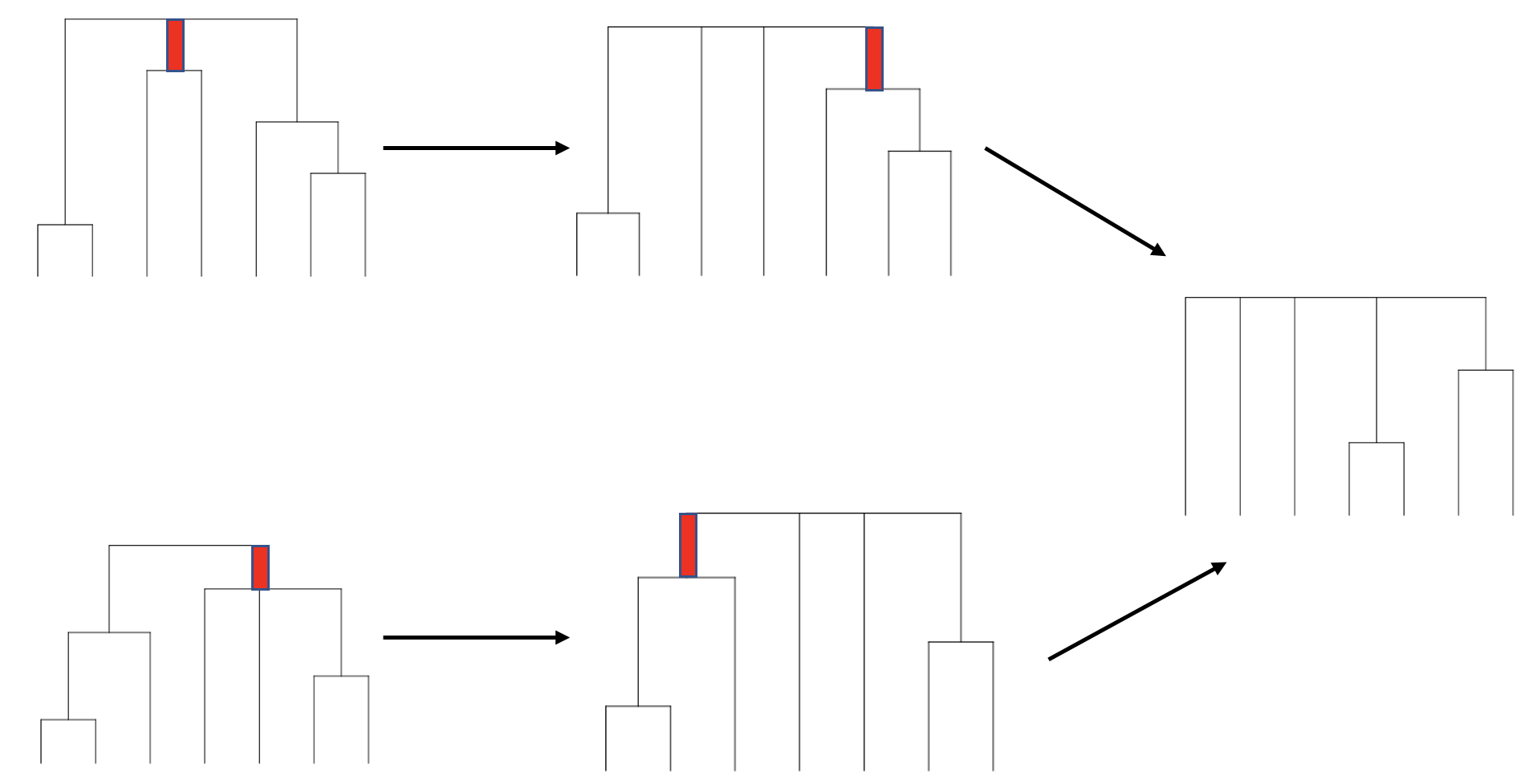}
    \caption{An illustration of the lattice path to find the least upper bound of two trees with 7 tips from Figure~\ref{fig:lub-ex} The red edge means it was collapsed to create the new tree.}
    \label{fig:lub-on-lattice}
\end{figure}


\subsection{Semi-random sampling of a ranked tree shape} \label{sec:semi-random}

\begin{algorithm}[H] 
\caption{Semi-random sampling of a tree with $N$ tips and $K$ internal nodes}
\label{alg:semi_random_sampling}
\vspace{0.1in}
\KwData{}
$N$: the number of tips. \\
$K$: the number of internal nodes.
    
\vspace{0.1in}

\KwResult{}
$F$: a $K \times K$ F-matrix with the $(K,K)$th entry equal to $N$ that corresponds to a tree $T_{N,K}$. 

\vspace{0.1in}

Sample $F_1,..., F_{K-1} \sim \text{Unif}\{2,3,...,N-1\}$ without replacement and rank them in order with $F_1$ the smallest and $F_{K-1}$ the largest. \\
Set $\text{diag}(F)= (F_1,..., F_{K-1}, N)$. \\
\For{$j=1, ...,N-1$}{
\For{$i=j+1,...,N$} {
  Set $F_{i,j} = \max\{0, F_{i-1, j}- 1\}$. 
  }
}
Return $F$. \\
\vspace{0.1in}
\end{algorithm}

\vspace{0.1in}
For our simulation, we generates one semi-random tree with $K$ internal nodes and $N$ tips per $K=1,2,\ldots,N-1$. 

\subsection{String representation and edge collapsing} \label{sec:string-collapse}

We defined the collapse edge operation in Section~\ref{sec:lattice} and related it to the F-matrix row and column deletion. The same operation can be carried out using the string representation too. Let $T_{N,K}$ be a ranked tree shape with $N$ tips and $K$ internal nodes with string representation $\{t, l\}$. Let $T_{N,K-1}^{(-e)}$ be obtained from $T_N^K$ by collapsing edge $(e, e+1)$ for some $e \in {1,...,K-1}$, which is equivalent to the condition $t_{e+1} = e$. When edge $(e,e+1)$ is collapsed, there corresponding action in the string representation is a deletion in $l$ and an addition of terms in $l$. Let $\{t^{(-e)}, l^{(-e)}\}$ denote the string representation of $T_{N,K-1}^{(-e)}$, and we detail how to obtain it from $\{t,l\}$. 
\begin{itemize}
    \item For $i=1,...,e$: $t^{(-e)}_i = t_i$. The parent nodes for the nodes with higher rank than $e$ do not change. 
    \item For $i=e+1,...,K-1$: $t^{(-e)}_i = t_{i+1} - \mathds{1}(t_{i+1} > t_{e+1})$. The parent nodes for the nodes with lower rank than $e$ either remain the same if the parent comes before $e$, or shifted by 1 otherwise because a ranking was deleted.  
    \item For $i=1,..., e-1$: $l^{(-e)}_i = l_i$. The number of leaves descending from nodes with rank higher than $e$ do not change.
    \item $(l^{(-e)}_1,...,l^{(-e)}_{e-1}, l^{(-e)}_e, l^{(-e)}_{e+1},...,l^{(-e)}_{K-1}) = (l_1,...,l_{e-1}, l_e+ l_{e+1}, l_{e+2},...,l_K)$: By removing edge $(e,e+1)$, the leaves originally descending from $e$ and from $e+1$ all descend from $e$ in the new tree, so they are added together. Everything else remains the same. 
\end{itemize}
Figure~\ref{fig:string_collapse_edge} shows the same trees from Figure~\ref{fig:Fmat_collapse_edge} with the string representations. We are still able to define the same edge collapsing operation, but we will use the F-matrix because of its intuitiveness in row/column deletion and ease of implementation.
\begin{figure}[H]
    \centering
    \includegraphics[width=0.8\linewidth]{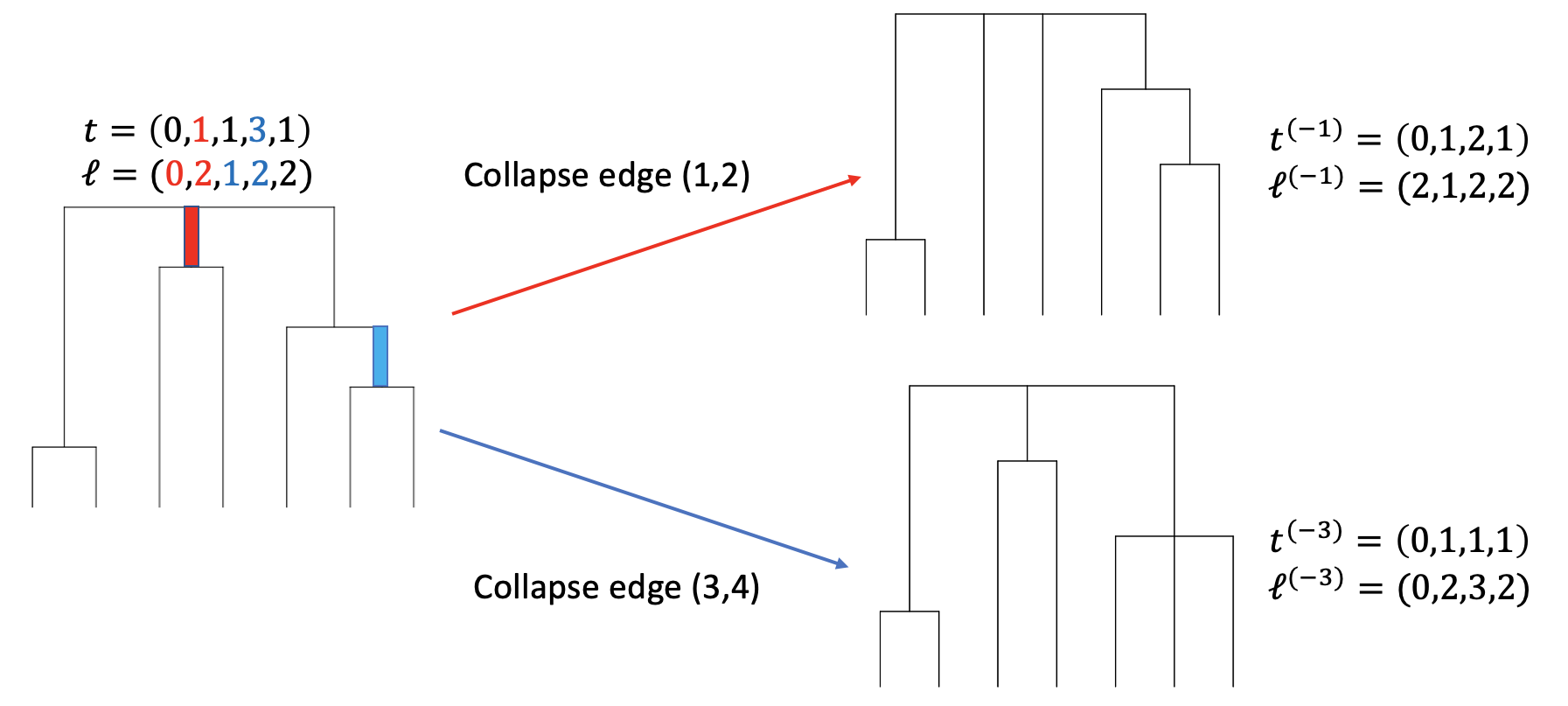}
    \caption{The tree from Figure~\ref{fig:Fmat_collapse_edge} with 7 tips and 5 internal nodes, where the edges $(1,2)$ and $(3,4)$ can be removed. We show both modified tree topology and the corresponding string representation.}
    \label{fig:string_collapse_edge}
\end{figure}

\end{document}